\documentclass[11pt, a4paper]{amsart}

\usepackage{amscd,verbatim}
\usepackage{amssymb}

\usepackage{bm}

\usepackage{amssymb, amsmath}
\usepackage[all]{xy}
\usepackage[inline]{enumitem}
\usepackage{hyphenat}
\usepackage[colorlinks,linkcolor=blue,citecolor=blue,urlcolor=red]{hyperref}
\usepackage[dvipsnames]{xcolor}
\usepackage{mathtools}
\usepackage{tikz-cd}
\usepackage[labelformat=empty]{caption}
\usepackage{xfrac}
 \usepackage[ left=3cm, right=3cm, top = 2.8cm, bottom = 2.8cm ]{geometry}
\usepackage{enumitem}

\makeatletter
\def\@tocline#1#2#3#4#5#6#7{\relax
  \ifnum #1>\c@tocdepth 
  \else
    \par \addpenalty\@secpenalty\addvspace{#2}%
    \begingroup \hyphenpenalty\@M
    \@ifempty{#4}{%
      \@tempdima\csname r@tocindent\number#1\endcsname\relax
    }{%
      \@tempdima#4\relax
    }%
    \parindent\z@ \leftskip#3\relax \advance\leftskip\@tempdima\relax
    \rightskip\@pnumwidth plus4em \parfillskip-\@pnumwidth
    #5\leavevmode\hskip-\@tempdima
      \ifcase #1
      \or\or \hskip 2em \or \hskip 2em \else \hskip 3em \fi%
      #6\nobreak\relax
    \dotfill\hbox to\@pnumwidth{\@tocpagenum{#7}}\par
    \nobreak
    \endgroup
  \fi}
\makeatother

\newcommand{\A}{\mathbf{A}}

\newcommand{\G}{\mathbf{G}}

\renewcommand{\P}{\mathbf{P}}
\newcommand{\Q}{\mathbb{Q}}

\newcommand{\Z}{\mathbb{Z}}

\newcommand{\sP}{\mathcal{P}}

\newcommand{\Cor}{\operatorname{\mathbf{Cor}}}

\newcommand{\HI}{\operatorname{\mathbf{HI}}}

\newcommand{\Ext}{\operatorname{Ext}}
\newcommand{\ul}[1]{{\underline{#1}}}

\newcommand{\Cpx}{{\operatorname{\mathbf{Cpx}}}}

\newcommand{\NST}{\operatorname{\mathbf{NST}}}

\newcommand{\DM}{\operatorname{\mathbf{DM}}}

\newcommand{\Map}{\operatorname{map}}
\newcommand{\Hom}{\operatorname{Hom}}
\newcommand{\uHom}{\operatorname{\underline{Hom}}}
\newcommand{\uExt}{\operatorname{\underline{Ext}}}

\renewcommand{\Im}{\operatorname{Im}}

\newcommand{\Spec}{\operatorname{Spec}}

\newcommand{\Comp}{\operatorname{Comp}}
\newcommand{\Sm}{\operatorname{\mathbf{Sm}}}

\newcommand{\Sch}{\operatorname{\mathbf{Sch}}}
\newcommand{\Shv}{\operatorname{\mathbf{Shv}}}

\newcommand{\pro}[1]{\text{\rm pro}_{#1}\text{\rm--}}
\newcommand{\tr}{{\operatorname{tr}}}

\newcommand{\dlog}{{\operatorname{dlog}}}

\newcommand{\eff}{{\operatorname{eff}}}

\newcommand{\fin}{{\operatorname{fin}}}

\newcommand{\red}{{\operatorname{red}}}

\newcommand{\Zar}{{\operatorname{Zar}}}
\newcommand{\Nis}{{\operatorname{Nis}}}
\newcommand{\et}{{\operatorname{\acute{e}t}}}

\newcommand{\Res}{\operatorname{Res}}

\newcommand{\codim}{{\operatorname{codim}}}

\renewcommand{\lim}{\operatornamewithlimits{\varprojlim}}
\newcommand{\colim}{\operatornamewithlimits{\varinjlim}}
\newcommand{\holim}{\operatorname{holim}}

\newcommand{\ol}{\overline}

\renewcommand{\phi}{\varphi}
\renewcommand{\epsilon}{\varepsilon}
\renewcommand{\div}{\operatorname{div}}

\newcommand{\MNS}{\operatorname{\mathbf{MNS}}}
\newcommand{\MNST}{\operatorname{\mathbf{MNST}}}

\newcommand{\MSm}{\operatorname{\mathbf{MSm}}}

\newcommand{\MPST}{\operatorname{\mathbf{MPST}}}
\newcommand{\CI}{\operatorname{\mathbf{CI}}}

\newcommand{\Sq}{{\operatorname{\mathbf{Sq}}}}
\newcommand{\MSmsq}{{(\MSm)^{\Sq}}}

\newcommand{\bcube}{{\ol{\square}}}
\newcommand{\cube}{\square}

\newcommand{\M}{\mathbf{M}}

\newcommand{\ulMSm}{\operatorname{\mathbf{\underline{M}Sm}}}
\newcommand{\ulMNS}{\operatorname{\mathbf{\underline{M}NS}}}
\newcommand{\ulMPS}{\operatorname{\mathbf{\underline{M}PS}}}

\newcommand{\ulMNST}{\operatorname{\mathbf{\underline{M}NST}}}
\newcommand{\ulMCor}{\operatorname{\mathbf{\underline{M}Cor}}}

\newcounter{spec}
{\end{list}}%

\setcounter{tocdepth}{1}


\newtheorem{lemma}{Lemma}[section]
\newtheorem{thm}[lemma]{Theorem}

\newtheorem{prop}[lemma]{Proposition}

\newtheorem{cor}[lemma]{Corollary}

\theoremstyle{definition}
\newtheorem{defn}[lemma]{Definition}

\theoremstyle{remark}

\newtheorem{remark}[lemma]{Remark}

\newtheorem{nota}[lemma]{Notation}
\numberwithin{equation}{section}

\numberwithin{equation}{lemma}

\colorlet{LightRubineRed}{RubineRed!70!}

\def\lSm{\mathbf{lSm}}
\def\SmlSm{\mathbf{SmlSm}}
\def\Sm{\mathbf{Sm}}

\newcounter{elno}

\begin{document}

\def\aNis{a_{\Nis}}
\def\ulaNis{\underline{a}_{\Nis}}
\def\ulasNis{\underline{a}_{s,\Nis}}
\def\ulaNisfin{\underline{a}^{\fin}_{\Nis}}
\def\ulasNisfin{\underline{a}^{\fin}_{s,\Nis}}
\def\asNis{a_{s,\Nis}}
\def\ulasNis{\underline{a}_{s,\Nis}}
\def\qaq{\quad\text{ and }\quad}
\def\limcat#1{``\underset{#1}{\lim}"}
\def\Comp{\Comp^{\fin}}
\def\ulc{\ul{c}}
\def\ulb{\ul{b}}
\def\ulgam{\ul{\gamma}}
\def\MSm{\operatorname{\mathbf{MSm}}}
\def\MsigmaS{\operatorname{\mathbf{MsigmaS}}}
\def\ulMSm{\operatorname{\mathbf{\ul{M}Sm}}}
\def\ulMsigmaS{\operatorname{\mathbf{\ul{M}NS}}}

\def\ulMPS{\operatorname{\mathbf{\ul{M}PS}}}

\def\ulMsigmaS{\operatorname{\mathbf{\ul{M}PS}_\sigma}}
\def\ulMsigmaSTfin{\operatorname{\mathbf{\ul{M}PST}^{\fin}_\sigma}}
\def\ulMsigmaST{\operatorname{\mathbf{\ul{M}PST}_\sigma}}
\def\MsigmaS{\operatorname{\mathbf{MPS}_\sigma}}
\def\MsigmaST{\operatorname{\mathbf{MPST}_\sigma}}
\def\MsigmaSTfin{\operatorname{\mathbf{MPST}^{\fin}_\sigma}}

\def\ulMNS{\operatorname{\mathbf{\ul{M}NS}}}
\def\ulMNSTfin{\operatorname{\mathbf{\ul{M}NST}^{\fin}}}
\def\ulMNSfin{\operatorname{\mathbf{\ul{M}NS}^{\fin}}}
\def\ulMNST{\operatorname{\mathbf{\ul{M}NST}}}
\def\ulMEST{\operatorname{\mathbf{\ul{M}EST}}}
\def\MNS{\operatorname{\mathbf{MNS}}}
\def\MNST{\operatorname{\mathbf{MNST}}}
\def\MEST{\operatorname{\mathbf{MEST}}}
\def\MNSTfin{\operatorname{\mathbf{MNST}^{\fin}}}
\def\RSC{\operatorname{\mathbf{RSC}}}
\def\NST{\operatorname{\mathbf{NST}}}
\def\EST{\operatorname{\mathbf{EST}}}

\newcommand{\NS}{{\operatorname{\mathrm{NS}}}}

\def\LogRec{\operatorname{\mathbf{LogRec}}}
\def\Ch{\operatorname{\mathrm{Ch}}}

\def\MSmsq{\MSm^{\Sq}}
\def\Comp{\operatorname{\mathbf{Comp}}}
\def\uli{\ul{i}}
\def\ulis{\ul{i}_s}
\def\is{i_s}
\def\qfor{\text{ for }\;\;}
\def\CIlog{\operatorname{\mathbf{CI}}^{\mathrm{log}}}
\def\CIltr{\operatorname{\mathbf{CI}}^{\mathrm{ltr}}}
\def\CIt{\operatorname{\mathbf{CI}}^\tau}
\def\CItsp{\operatorname{\mathbf{CI}}^{\tau,sp}}
\def\ltr{\mathrm{ltr}}

\def\kX{\mathfrak{X}}
\def\kY{\mathfrak{Y}}
\def\kC{\mathfrak{C}}

\def\otCIsp{\otimes_{\CI}^{sp}}
\def\otCINissp{\otimes_{\CI}^{\Nis,sp}}

\def\hM#1{h_0^{\bcube}(#1)}
\def\hMNis#1{h_0^{\bcube}(#1)_{\Nis}}
\def\hMM#1{h^0_{\bcube}(#1)}
\def\hMw#1{h_0(#1)}
\def\hMwNis#1{h_0(#1)_{\Nis}}

\def\hetrec{h_{0, \et}^{\mathbf{rec}}}
\def\hetcube{h_{0, \et}^{\cube}}

\def\ihF#1{F^{#1}}
\def\ihFA{\ihF {\A^1}}

\def\istm{\iota_{st,m}}
\def\im{\iota_m}
\def\est{\epsilon_{st}}
\def\tL{\tilde{L}}
\def\tX{\tilde{X}}
\def\tY{\tilde{Y}}
\def\omegaCI{\omega^{\CI}}
\def\qwith{\;\text{ with} }
\def\aVNis{a^V_\Nis}
\def\ulMCorls{\ulMCor_{ls}}

\def\Zinf{Z_\infty}
\def\Einf{E_\infty}
\def\Xinf{X_\infty}
\def\Yinf{Y_\infty}
\def\Pinf{P_\infty}

\def\Lot{{\cubegm\otimes\cubegm}}

\def\Ln#1{\Lambda_n^{#1}}
\def\tLn#1{\widetilde{\Lambda_n^{#1}}}
\def\tild#1{\widetilde{#1}}
\def\otuCINis{\otimes_{\underline{\CI}_\Nis}}
\def\otCI{\otimes_{\CI}}
\def\otCINis{\otimes_{\CI}^{\Nis}}
\def\tF{\widetilde{F}}
\def\tG{\widetilde{G}}
\def\bcubered{\bcube^{\mathrm{red}}}
\def\cubegm{\bcube^{(1)}}
\def\cubegma{\bcube^{(a)}}
\def\cubegmb{\bcube^{(b)}}
\def\cubegmred{\bcube^{(1)}_{red}}
\def\cubegmreda{\bcube^{(a)}_{red}}
\def\cubegmredb{\bcube^{(b)}_{red}}

\def\LT{\bcube^{(1)}_{T}}
\def\LU{\bcube^{(1)}_{U}}
\def\LV{\bcube^{(1)}_{V}}
\def\LW{\bcube^{(1)}_{W}}
\def\LTred{\bcube^{(1)}_{T,red}}
\def\Lred{\bcube^{(1)}_{red}}
\def\LTred{\bcube^{(1)}_{T,red}}
\def\LUred{\bcube^{(1)}_{U,red}}
\def\LVred{\bcube^{(1)}_{V,red}}
\def\LWred{\bcube^{(1)}_{W,red}}
\def\PP{\P}
\def\AA{\A}

\def\LL{\bcube^{(2)}}
\def\LLred#1{\bcube^{(2)}_{#1,red}}
\def\LLredd{\bcube^{(2)}_{red}}
\def\Lredd#1{\bcube_{#1,red}}

\def\Lnredd#1{\bcube^{(#1)}_{red}}

\def\LLT{\bcube^{(2)}_T}
\def\LLTred{\bcube^{(2)}_{T,red}}

\def\LLU{\bcube^{(2)}_U}
\def\LLUred{\bcube^{(2)}_{U,red}}

\def\LLS{\bcube^{(2)}_S}
\def\LLSred{\bcube^{(2)}_{S,red}}
\def\tMCor{\Hom_{\MPST}}
\def\otHINis{\otimes_{\HI}^{\Nis}}

\def\Sh{\operatorname{\mathbf{Shv}}}
\def\Shv{\operatorname{\mathbf{Shv}}}
\def\PSh{\operatorname{\mathbf{PSh}}}
\def\Shltr{\operatorname{\mathbf{Shv}_{dNis}^{ltr}}}
\def\Shlog{\operatorname{\mathbf{Shv}_{dNis}^{log}}}
\def\Shvlog{\operatorname{\mathbf{Shv}^{log}}}
\def\SmlSm{\operatorname{\mathbf{SmlSm}}}
\def\lSm{\operatorname{\mathbf{lSm}}}
\def\lCor{\operatorname{\mathbf{lCor}}}
\def\SmlCor{\operatorname{\mathbf{SmlCor}}}
\def\PShltr{\operatorname{\mathbf{PSh}^{ltr}}}
\def\PShlog{\operatorname{\mathbf{PSh}^{log}}}
\def\lDM{\operatorname{\mathbf{logDM}^{eff}}}
\def\logDM{\operatorname{\mathbf{log}\mathcal{DM}^{eff}}}
\def\logDMlet{\operatorname{\mathbf{log}\mathcal{DM}^{eff}_{\mathrm{l\acute{e}t}}}}
\def\logDMone{\operatorname{\mathbf{log}\mathcal{DM}^{eff}_{\leq 1}}}
\def\logCI{\mathbf{logCI}} 

\def\DM{\operatorname{\mathbf{DM}^{eff}}}
\def\DMinf{\operatorname{\mathcal{DM}^{eff}}}
\def\lDA{\operatorname{\mathbf{logDA}^{eff}}}
\def\logDA{\operatorname{\mathbf{log}\mathcal{DA}^{eff}}}
\def\DA{\operatorname{\mathbf{DA}^{eff}}}
\def\Log{\operatorname{\mathcal{L}\textit{og}}}
\def\Rsc{\operatorname{\mathcal{R}\textit{sc}}}
\def\Pro{\mathrm{Pro}\textrm{-}}
\def\pro{\mathrm{pro}\textrm{-}}
\def\dg{\mathrm{dg}}
\def\plim{\mathrm{``lim"}}
\def\ker{\mathrm{ker}}
\def\coker{\mathrm{coker}}

\def\Alb{\operatorname{Alb}}
\def\bAlb{\mathbf{Alb}}
\def\Gal{\operatorname{Gal}}

\def\hofib{\mathrm{hofib}}
\def\triv{\mathrm{triv}}
\def\ABl{\mathcal{A}\textit{Bl}}
\def\divsm#1{{#1_\mathrm{div}^{\mathrm{Sm}}}}

\def\cA{\mathcal{A}}
\def\cB{\mathcal{B}}
\def\cC{\mathcal{C}}
\def\cD{\mathcal{D}}
\def\cE{\mathcal{E}}
\def\cI{\mathcal{I}}
\def\cS{\mathcal{S}}
\def\cM{\mathcal{M}}
\def\cO{\mathcal{O}}
\def\cP{\mathcal{P}}

\def\XP{X \backslash \sP}
\def\M0a{{}^t\cM_0^a}
\newcommand{\Ind}{{\operatorname{Ind}}}

\def\Xkbar{\overline{X}_{\overline{k}}}
\def\dx{{\rm d}x}

\newcommand{\dNis}{{\operatorname{dNis}}}
\newcommand{\loget}{{\operatorname{l\acute{e}t}}}
\newcommand{\ABNis}{{\operatorname{AB-Nis}}}
\newcommand{\sNis}{{\operatorname{sNis}}}
\newcommand{\sZar}{{\operatorname{sZar}}}
\newcommand{\set}{{\operatorname{s\acute{e}t}}}
\newcommand{\cofib}{\mathrm{Cofib}}

\newcommand{\Gmlog}{\G_m^{\log}}
\newcommand{\Gmlogred}{\overline{\G_m^{\log}}}

\newcommand{\varcolim}{\mathop{\mathrm{colim}}}
\newcommand{\varlim}{\mathop{\mathrm{lim}}}
\newcommand{\tensor}{\otimes}

\newcommand{\eq}[2]{\begin{equation}\label{#1}#2 \end{equation}}
\newcommand{\eqalign}[2]{\begin{equation}\label{#1}\begin{aligned}#2 \end{aligned}\end{equation}}

\def\varplim#1{\text{``}\varlim_{#1}\text{''}}
\def\det{\mathrm{d\acute{e}t}}

	\address{Institut für Mathematik, Universität Heidelberg, MATHEMATIKON INF 205, 69120  Heidelberg, Germany}
	\email{merici@mathi.uni-heidelberg.de}
	
	\thanks{This project was supported by the RCN project 313472 \emph{EMOHO - Equations in Motivic Homotopy} and the MSCA-PF project 101103309 \emph{MIPAC - Motivic Integral p-adic Cohomologies}}

	\title{A motivic integral p-adic cohomology}
	\author{Alberto Merici}
	
	\begin{abstract}
		We construct an integral $p$-adic cohomology that compares with rigid cohomology after inverting $p$. Our approach is based on the log-Witt differentials of Hyodo--Kato and log-\'etale motives of Binda--Park--{\O}stv{\ae}r. In case $k$ satisfies resolutions of singularities, we moreover prove that it agrees with the ``good'' integral $p$-adic cohomology of Ertl--Shiho--Sprang: from this we deduce some interesting motivic properties and a K\"unneth formula for the $p$-adic cohomology of Ertl--Shiho--Sprang.
	\end{abstract}
	
	\maketitle
	
	\tableofcontents
	\section{Introduction}
	
	Let $k$ be a perfect field of characteristic $p>0$ and let $W(k)$ be the ring of Witt vectors of $k$. It is now well established (see \cite{AbeCrew}) that there is \emph{no} cohomology theory\[
	R\Gamma\colon \Sm(k)\to \cD(W(k))^{op}
	\]
	satisfying the following three hypotheses:
	\begin{enumerate}[label=(\roman*)]
		\item\label{(i)} $R\Gamma(X)$ agrees with the rigid cohomology of $X$ after inverting $p$.
		\item\label{(ii)} The complex $R\Gamma(X)$ is bounded and the cohomology groups are finitely generated $W(k)$-modules.
		\item\label{(iii)} It satisfies finite \'etale descent, in the sense that for any \v Cech hypercover $X_\bullet \to X$ associated to a finite \'etale Galois cover $X_0\to X$, the induced morphism\[
		R\Gamma(X)\to R\Gamma(X_{\bullet})
		\]
		is an equivalence.
	\end{enumerate}
	In general the different incarnations of integral $p$-adic cohomology theories fail to satisfy \ref{(ii)} for $X$ that is not smooth and proper, as the $p$-torsion is very rich. In fact, as checked in \cite{AbeCrew}, if one assumes \ref{(i)} and \ref{(ii)}, the failure of \ref{(iii)} follows from a failure of descent along Artin--Schreier covers of $\A^1_k$; as these the archetype of wild ramification at $\infty$, this leads to think that if one relaxes \ref{(iii)} by avoiding wild ramification, there is a possibility of obtaining a positive result.

	In \cite{ertl2021integral}, with very strong assumptions on resolution of singularities\footnote{See Hypotheses 1.5-1.8 of \cite{ertl2021integral}: the assumption concerns strong and embedded resolutions of singularities and weak factorizations, analogous to \cite[Main Theorem I and II]{Hironaka} and \cite[0.0.1]{wlodarczyk}, or functorial resolutions as in \cite{abramobvich-temkin}}, the authors showed that the integral $p$-adic cohomology theory defined by sending $X\in \Sm(k)$ to the log de Rham--Witt cohomology of any smooth compactification $(\overline{X},\partial X)$ with log poles on the boundary $\partial X$, is well defined, functorial in $X$, and satisfies \ref{(i)} and \ref{(ii)} above, but not \ref{(iii)}.
	
	The goal of this paper is to prove the following result, without any assumption on resolutions of singularities of the base field $k$:
	\begin{thm}\label{thm:man-intro-absolute}
		There exists an integral $p$-adic cohomology that factors through Voevodsky's stable $\infty$-category of effective motives\[
		\begin{tikzcd}
			\Sm(k)\ar[rr,"R\Gamma_p"]\ar[dr] &&\cD(W(k))^{op}\\
			&\DMinf(k,\Z)\ar[ur].
		\end{tikzcd}
		\]
		For all $X\in \Sm(k)$, there is a canonical map to crystalline cohomology\[
		R\Gamma_p(X)\to R\Gamma_{\rm crys}(X)\]
		that factors through $R\Gamma_{\rm crys}(\ol{X},\partial X)$ whenever $X$ admits a smooth partial compactification $\ol{X}$. Moreover, after inverting $p$, there is a map to rigid cohomology: \[
		R\Gamma_p(X)[1/p]\to R\Gamma_{\rm rig}(X/K),\]
        where $K=W(k)[1/p]$.
	\end{thm}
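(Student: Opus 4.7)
The plan is to construct $R\Gamma_p$ by lifting Hyodo--Kato's log de Rham--Witt complex to the log motivic framework of Binda--Park--{\O}stv{\ae}r, and then to descend along the comparison between log motives and Voevodsky motives. Concretely, on a log-smooth pair $(\ol{X},D)$ the log--Witt differentials $W\Omega^\bullet_{\ol{X}|D}$ assemble into a presheaf with log transfers on $\lCor(k)$. The first step is to check the two axioms required to represent an object of $\logDM^{\eff}(k,\Z)$: descent for the dividing Nisnevich topology, and invariance along $(\bP^1,\infty)$. The Binda--Park--{\O}stv{\ae}r formalism is crucial here precisely because it bypasses resolution of singularities: admissible blow--ups and log modifications are built into the topology, so independence from the choice of compactification is an \emph{output} of the framework rather than an assumption.

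Given the resulting log motive $\mathcal{W}_p\in \logDM^{\eff}(k,\Z)$, one then defines $R\Gamma_p(X) := \Map_{\logDM^{\eff}}(M(X,\triv),\mathcal{W}_p)$ for $X\in \Sm(k)$ equipped with trivial log structure. Factorization through $\DMinf(k,\Z)$ amounts to showing that $\mathcal{W}_p$ lies in the essential image of the fully faithful comparison $\omega^\ast\colon \DMinf(k,\Z)\hookrightarrow \logDM^{\eff}(k,\Z)$ of Binda--Park--{\O}stv{\ae}r, for which the key point is that $\mathcal{W}_p$ is already $\A^1$--invariant when evaluated on trivially log--structured smooth schemes. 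This latter statement is a consequence of the $(\bP^1,\infty)$--invariance established in the first step together with the identity $\omega_\sharp M(\ol{X},\partial X) = M(\ol{X}\setminus\partial X)$ in $\DMinf$, which allows one to detect $\A^1$--invariance ``motivically'' even when no smooth compactification of $X$ exists.

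For the crystalline comparison, the map $R\Gamma_p(X)\to R\Gamma_{\rm crys}(X)$ comes from Hyodo--Kato's identification of log--Witt differentials with log--crystalline cohomology, composed with the forgetful map from the log to the trivial log structure; when a smooth compactification $\ol{X}$ with boundary $\partial X$ is available, the construction by design factors through $R\Gamma_{\rm crys}(\ol{X},\partial X)$. After inverting $p$, the equivalence with $R\Gamma_{\rm rig}(X)$ follows from the classical comparison between log--crystalline cohomology of an snc compactification and rigid cohomology of the open part, due to Berthelot, Shiho, and Grosse--Kl\"onne, combined with the fact that after $[1/p]$ the log motive $\mathcal{W}_p[1/p]$ is determined by its values on pairs that do admit a smooth compactification, reducing to the known case.

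The main obstacle is unquestionably the first step: proving dividing Nisnevich descent and $(\bP^1,\infty)$--invariance for the presheaf $W\Omega^\bullet_{-|-}$ \emph{without} invoking resolution of singularities. Over a base satisfying Hypotheses~1.5--1.8 of \cite{ertl2021integral} this is the content of Ertl--Shiho--Sprang, but here one must work directly inside the Binda--Park--{\O}stv{\ae}r framework: descent along admissible blow--ups of the boundary is reduced, via the Hyodo--Kato--Tsuji analysis of log--Witt differentials along exceptional divisors, to an explicit computation on charts, while $(\bP^1,\infty)$--invariance is proved by producing an explicit null--homotopy at the level of the log de Rham--Witt complex, mirroring the classical contraction for log de Rham on $(\bP^1,\infty)$.
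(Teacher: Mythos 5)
Your overall architecture (represent the log de Rham--Witt complex in $\mathbf{log}\mathcal{DM}^{\rm eff}$, then pass to Voevodsky motives via $\omega^*$, then use classical comparisons for the rigid statement) is the right one, but the central step is wrong. You define $R\Gamma_p(X):=\Map(M(X,\triv),\mathcal{W}_p)$ and claim that $\mathcal{W}_p$ lies in the essential image of $\omega^*$ because it is ``already $\A^1$-invariant when evaluated on trivially log-structured smooth schemes.'' This is false: $\omega_\sharp W_m\Lambda^q=W_m\Omega^q$ is a reciprocity sheaf that is \emph{not} $\A^1$-invariant (already $W_m\cO(\A^1_k)=W_m(k[t])\neq W_m(k)$), and $\bcube=(\P^1,\infty)$-invariance in the log category does not imply $\A^1$-invariance of $\omega_\sharp$ --- accommodating exactly this failure is the point of the log-motivic framework. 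With your definition, $R\Gamma_p(X)$ is literally $R\Gamma_{\rm crys}(X,\triv)$, which neither factors through $\DMinf(k,\Z)$ nor agrees with rigid cohomology after inverting $p$ for non-proper $X$. The correct construction applies the localization to the \emph{source}, not the target: one sets $R\Gamma_p(X):=\lim_m\Map(L_{\loget}\omega^*M^V(X),W_m\Lambda^\bullet)$, where $\omega^*$ is the fully faithful right adjoint of $L^\bcube\omega_\sharp$ (no resolution of singularities needed for full faithfulness), so that factorization through $\DMinf(k,\Z)$ holds by construction, and the map to $R\Gamma_{\rm crys}(X)$ is induced by the unit $M(X,\triv)\to\omega^*M^V(X)$ (which factors through $M(\ol X,\partial X)$ when a smooth compactification exists).

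Two further points are asserted where real work is required. First, that $W_m\Lambda^\bullet$ ``assembles into a presheaf with log transfers'' is the key technical difficulty of the whole construction: it is obtained by lifting the isomorphism $\omega_\sharp\Log(W_m\Omega^q)\cong\omega_\sharp W_m\Lambda^q$ along a criterion for extending maps of $\bcube$-invariant sheaves from the trivial-log locus (via unramifiedness and residues), which reduces to comparing the Gros and Binda--R\"ulling--Saito Gysin maps; your sketch of $\bcube$-invariance via the weight filtration is fine, but transfers do not come for free. Second, for the rigid comparison without resolution of singularities you cannot ``reduce to pairs admitting a smooth compactification'': one needs the $h$-sheaf property of $W\Omega_\Q$ (Ertl--Miller), proper hypercovers by snc pairs \`a la Nakkajima, $h$-descent for $\A^1$-local theories, and a Gysin double induction on proper snc pairs to produce a functorial retraction; your argument as stated has no mechanism to reach a general $X\in\Sm(k)$.
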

	In particular the factorization through $\DMinf(k,\Z)$ implies the following descent property:
	\begin{enumerate}[label=(\roman*')]
		\setcounter{enumi}{2}
		\item\label{(iii)'} It satisfies Nisnevich descent, in the sense that for any \v Cech hypercover $X_\bullet \to X$ associated to a Nisnevich cover $X_0\to X$, the induced morphism\[
		R\Gamma_p(X)\to R\Gamma_p(X_{\bullet})
		\]
		is an equivalence.
	\end{enumerate}
	and the following properties:
	\begin{enumerate}
		\item Projective bundle formula: for $E\to X$ a vector bundle of rank $n+1$ and $\P(E)$ the associated projective bundle, there is an equivalence\[
		R\Gamma_p(\P(E))\simeq \bigoplus_{i=0}^{n} R\Gamma_p(X(i)[2i]).
		\]
		\item (Theorem \ref{thm:purity}) Purity: let $Z\subseteq X$ be a smooth closed subset of codimension $d$ and $U=X-Z$. Then there is a fiber sequence\[
		R\Gamma_p(Z(d)[2d])\to R\Gamma_p(X)\to R\Gamma_p(U).
		\]
	\end{enumerate}
	In case $k$ satisfies resolutions of singularities as in Notation \ref{nota:RS}\footnote{Notice that we assume strictly less hypotheses than \cite{ertl2021integral}: in particular, we do not assume embedded resolution and weak factorization of pairs as in Hypotheses 1.8 and 1.9}, we are able to prove that $R\Gamma_p(X)$ agrees with the log de Rham--Witt cohomology of any smooth compactification $(\overline{X},\partial X)$ with log poles on the boundary $\partial X$. In particular, we deduce (independently from \cite{ertl2021integral} and with milder assumptions on resolutions of singularities), the following:
	\begin{thm}\label{thm:main-intro}
		Assume that $k$ satisfies resolution of singularities as in \ref{nota:RS}. The map\[
		R\Gamma_p(X)\to R\Gamma_{\rm crys}((\ol{X},\partial X)/W(k))
		\] 
		is an equivalence for any smooth compactification. In particular the cohomology theory
		\begin{equation}\label{eq:coh-well-defined-intro}
			X \in \Sm(k) \mapsto R\Gamma_{\rm crys}((\ol{X},\partial X)/W(k))\in \cD(W(k)) 
		\end{equation}
		is well defined (i.e. does not depend on the choice of a compactification) and satisfies \ref{(i)} and \ref{(ii)}. 
	\end{thm}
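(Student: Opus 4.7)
The plan is to deduce Theorem \ref{thm:main-intro} by showing that the canonical map $R\Gamma_p(X)\to R\Gamma_{\rm crys}((\ol{X},\partial X)/W(k))$ from Theorem \ref{thm:man-intro-absolute} is an equivalence for every smooth compactification $(\ol{X},\partial X)$ with SNC boundary. Once this is established, well-definedness of the functor in \eqref{eq:coh-well-defined-intro} is automatic since the left hand side depends only on $X$; property \ref{(i)} is then inherited from Theorem \ref{thm:man-intro-absolute}, and \ref{(ii)} follows from the standard finite generation of log crystalline cohomology of a smooth proper SNC pair. The assumption \ref{nota:RS} guarantees that every $X\in \Sm(k)$ admits such a compactification, so the statement is meaningful.

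The first step is to realize log Hyodo--Kato cohomology motivically: I would exhibit an object $\sH_p\in \logDM(k,\Z)$ whose value on a fine saturated log smooth SNC pair $(\ol{X},\partial X)$ computes its log de Rham--Witt cohomology, hence by Hyodo's comparison its log crystalline cohomology. This would use $\bcube$-invariance of the log de Rham--Witt complex together with its Nisnevich descent. By the log-motivic construction of $R\Gamma_p$ via log-Witt differentials, the image of $\sH_p$ under the forgetful functor $\logDM\to \DMinf$ recovers $R\Gamma_p$, producing the comparison map $R\Gamma_p(X)\to \sH_p(\ol{X},\partial X)\simeq R\Gamma_{\rm crys}((\ol{X},\partial X)/W(k))$, natural in the SNC compactification.

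What remains is to verify that this map is an equivalence. The essential input is a blowup invariance statement for log Hyodo--Kato cohomology: for a blowup $(\ol{X}',\partial X')\to (\ol{X},\partial X)$ along a smooth center contained in $\partial X$, the induced map on log crystalline cohomology is an equivalence. Combined with the projective bundle formula and the purity statement of Theorem \ref{thm:purity} already established, this reduces, via Nisnevich descent on $\ol X$, to the case $\partial X = \emptyset$, in which $X = \ol X$ is smooth projective and both sides coincide with ordinary crystalline cohomology by construction of $R\Gamma_p$. I expect the main obstacle to be establishing blowup invariance integrally: the classical weight arguments available after inverting $p$ break down, so the proof has to proceed through a direct computation with the Hyodo--Kato complex using the explicit structure of log de Rham--Witt on the blowup, leveraging the motivic projective bundle formula to control the contribution of the exceptional divisor and to propagate the equivalence across the Nisnevich descent spectral sequence without losing $p$-torsion information.
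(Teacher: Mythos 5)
Your first step (representability of the log de Rham--Witt complex in $\logDM_{\loget}(k,\Z)$, so that $R\Gamma_{\rm crys}((\ol{X},\partial X)/W(k))$ is computed by mapping out of $M(\ol{X},\partial X)$) matches the paper's Theorems \ref{thm:witt-local} and \ref{thm:de-rham-witt-rsc-log}. The divergence, and the gap, is in how you propose to prove that the comparison map is an equivalence. Your induction is supposed to bottom out in the case $\partial X=\emptyset$, where you claim ``both sides coincide with ordinary crystalline cohomology by construction of $R\Gamma_p$''. This is false as stated: by construction $R\Gamma_p(X)=\lim_m\Map_{\logDM_{\loget}}(L_{\loget}\omega^*M^V(X),W_m\Lambda^\bullet)$, and to identify this with $\lim_m\Map(M(X,\triv),W_m\Lambda^\bullet)=R\Gamma_{\rm crys}(X)$ even for $X$ smooth and proper you need $\omega^*M^V(X)\simeq M(X,\triv)$, i.e.\ that $M(X,\triv)$ is $(\A^1,\triv)$-local in $\logDM$. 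That locality is precisely the hard content of \cite[Proposition 8.2.8 / Theorem 8.2.16]{BPO} and is where RS1--RS2 enter; it is not free from the definitions. So your base case already contains the full strength of the theorem, and the inductive scaffolding (blowup invariance, purity, Nisnevich descent) does not discharge it.

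You have also misplaced the difficulty: invariance under blowups with center in the boundary is a log modification, hence is automatic for anything factoring through the dividing topology once Theorem \ref{thm:witt-local} is in place --- no ``direct integral computation with the Hyodo--Kato complex on the blowup'' is needed there. The paper's actual proof (Proposition \ref{prop:comparison-ESS}) is a one-liner: under RS1 and RS2, \cite[Proposition 8.2.8]{BPO} gives $\omega^*M^V(X)\simeq M(\ol{X},\partial X)$ in $\logDM(k,\Z)$, independent of the chosen smooth Cartier compactification, and applying the (already representable) functor $R\Gamma_{\rm crys}$ yields the equivalence, well-definedness, and properties \ref{(i)} and \ref{(ii)} at once. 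To repair your argument you would either have to import that BPO identification (at which point the induction is superfluous) or reprove its cohomological shadow integrally, which you acknowledge you do not do.
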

	As observed in \cite[Proposition 2.24]{ertl2021integral}, the cohomology theory \eqref{eq:coh-well-defined-intro} does not satisfy \ref{(iii)}. On the other hand, it follows from our construction (see \ref{ssec:tame}) that it satisfies finite \emph{tame}\footnote{Thanks to \cite[Theorem 1.1]{KerzSchmidt}, there is no ambiguity in various notions of the adjective \emph{tame}.} descent in the following sense:
	\begin{enumerate}[label=(\roman*'')]
		\setcounter{enumi}{2}
		\item\label{(iii)''} For any \v Cech hypercover $X_\bullet \to X$ associated to a finite tame Galois cover $X_0\to X$, the induced morphism\[
		R\Gamma_p(X)\to R\Gamma_p(X_{\bullet})
		\]
		is an equivalence.
	\end{enumerate}
	Moreover, under these assumptions we can strengthen the motivic properties:
	\begin{enumerate}
		\item (Theorem \ref{thm:pbf}) Projective bundle formula: for $E\to X$ a vector bundle of rank $n+1$ and $\P(E)$ the associated projective bundle, there is an equivalence\[
		R\Gamma_p(\P(E))\simeq \bigoplus_{i=0}^{n} R\Gamma_p(X)[-i]
		\]
		which, if $X$ is proper, agrees with the projective bundle formula of \cite{Gros1985}.
		\item (Theorem \ref{thm:purity}) Purity: let $Z\subseteq X$ be smooth closed subset of codimension $d$ and $U=X-Z$. Then there is a fiber sequence\[
		R\Gamma_p(Z)[-d]\to R\Gamma_p(X)\to R\Gamma_p(U).
		\]
		\item (Theorem \ref{thm:kunneth}) K\"unneth: If $k$ satisfies \ref{nota:RS}, then for $X,Y\in \Sm(k)$ there is an equivalence \[
		R\Gamma_p(X)\widehat{\star^{L}} R\Gamma_p(Y)\simeq R\Gamma_p(X\times Y)
		\]
		where $\widehat{\star^{L}}$ is Ekhedal's complete-derived product of modules on the Raynaud ring $R(k)$. 
	\end{enumerate}
	We give an overview of our arguments: recall the log motivic categories  $\mathbf{log}\mathcal{DA}^{\mathrm{eff}}_{\mathrm{l\et}}$ and $\mathbf{log}\mathcal{DM}^{\mathrm{eff}}_{\mathrm{l\et}}$ from \cite{BPO} (see \ref{ssec:logDM}). The main technical result is the following:
	\begin{thm}\label{thm:intro-crys-S}
		For all $S\in \Sm(k)$, the cohomology of the log de Rham--Witt sheaves $W_m\Lambda^q_{-/S}$ is representable in the log motivic category $\mathbf{log}\mathcal{DA}^{\mathrm{eff}}_{\mathrm{l\et}}(S,\Z)$. Moreover, if $S=\Spec(k)$, then the sheaves $W_m\Lambda^q$ have log transfers and their cohomology is representable in $\mathbf{log}\mathcal{DM}^{\mathrm{eff}}_{\mathrm{l\et}}(k,\Z)$
	\end{thm}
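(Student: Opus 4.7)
The plan is to verify the two conditions that characterize representability in $\mathbf{log}\mathcal{DA}^{\mathrm{eff}}_{\mathrm{l\et}}(S,\Z)$: namely, that $W_m\Lambda^q_{\_/S}$ defines an $\mathrm{l\et}$-sheaf on log smooth $S$-schemes, and that it is strictly $\bcube$-invariant, with $\bcube = (\P^1,\infty)$. For representability in $\mathbf{log}\mathcal{DM}^{\mathrm{eff}}_{\mathrm{l\et}}(k,\Z)$ when $S=\Spec k$, one further has to equip $W_m\Lambda^q$ with the structure of a presheaf with log transfers. I would proceed by induction on $m\ge 1$.

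For the base case $m=1$, the sheaf $W_1\Lambda^q_{\_/S} = \Lambda^q_{\_/S}$ is the relative Hyodo--Kato log K\"ahler differentials; its strict $\bcube$-invariance and $\mathrm{l\et}$-descent reduce to the representability of the log Hodge complex already established in the Binda--Park--\O{}stv\ae r framework. The existence of log transfers over $\Spec k$ combines pullback-of-forms with residue maps along the log boundary, in the spirit of the Chatzistamatiou--R\"ulling cycle-theoretic trace transplanted to the log setting. For the induction step, I would appeal to an exact sequence of Verschiebung type
\[
0 \to W_m\Lambda^q_{\_/S} \xrightarrow{V} W_{m+1}\Lambda^q_{\_/S} \to \Lambda^q_{\_/S} \to 0,
\]
so that $W_{m+1}\Lambda^q_{\_/S}$ is an extension of the two inductively representable outer terms. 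Since the full subcategory of strictly $\bcube$-invariant $\mathrm{l\et}$-sheaves is closed under extensions in the triangulated setting, representability propagates; compatibility of log transfers with $V$ is a projection formula that reduces to its classical non-log counterpart.

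The main technical obstacle will be that $W_m\Lambda^q_{\_/S}$ is a priori only a Nisnevich (or dividing-Nisnevich) sheaf on log smooth schemes, and one has to promote both the exactness of the Verschiebung sequence and the $\bcube$-invariance to the $\mathrm{l\et}$ topology. Here I would use the Artin--Schreier--Witt sequence as a measure of the difference between \'etale and Nisnevich cohomology of Witt vectors, verifying that the relevant coboundary lands in the image of $F-\id$, which is controllable in terms of the log structure along the boundary. A secondary issue, relevant for the second half of the statement, is the compatibility of log transfers with $\mathrm{l\et}$-descent; this should reduce, as in the smooth case, to an explicit cocycle check on tame Kummer covers, where the trace formalism for log crystalline cohomology can be brought to bear.
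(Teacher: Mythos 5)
Your overall strategy (check strict $\bcube$-invariance and $\loget$-descent, then add transfers over $\Spec(k)$) identifies the right properties to verify, but two of your key steps do not work as stated, and the second half misses the idea the paper actually needs.

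First, the Verschiebung sequence
\[
0 \to W_m\Lambda^q_{\_/S} \xrightarrow{V} W_{m+1}\Lambda^q_{\_/S} \to \Lambda^q_{\_/S} \to 0
\]
is not exact: the kernel of the restriction $W_{m+1}\Lambda^q\to \Lambda^q$ is $VW_m\Lambda^q + dVW_m\Lambda^{q-1}$, not $VW_m\Lambda^q$, and the graded pieces of the standard filtration involve quotients of the form $\Omega^q/B_m\Omega^q$ and $Z_m\Omega^{q-1}$ rather than $\Lambda^q$ itself. So your induction on $m$ collapses at the very first step. The paper does not induct on $m$ at all; it inducts on the weight filtration $P_jW_m\Lambda^q$ of the log poles (Lemma \ref{lem:cohomology-weight-filtration}), using the residue exact sequences \eqref{eq:weight-es} to reduce $(\P^n,\P^{n-1})$-locality of $W_m\Lambda^q$ to Gros's projective bundle formula for the classical $W_m\Omega^\bullet$ on the boundary strata $\partial X^{(j)}$. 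Passage to the $\loget$ topology is then immediate from Niziol's comparison of Zariski, strict Nisnevich and Kummer-\'etale cohomology of these sheaves together with the Binda--Park--{\O}stv{\ae}r criterion that $(\P^\bullet,\P^{\bullet-1})$-local objects are dividing-local. In particular your worry about Artin--Schreier--Witt coboundaries is misplaced: the log-\'etale topology is generated by Kummer (tame) covers and dividing covers, so the wild covers responsible for the failure of integral \'etale descent of Witt vectors never appear.

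Second, and more seriously, the transfer structure is the heart of the theorem, and "a log Chatzistamatiou--R\"ulling trace plus a projection formula for $V$" is a program, not a proof; constructing traces directly on log de Rham--Witt forms is exactly the difficulty the paper is designed to circumvent. The paper's mechanism is to identify $W_m\Lambda^n$ with $\Log(W_m\Omega^n)$, which already carries log transfers because $W_m\Omega^n$ is a reciprocity sheaf. The identification is produced by the lifting criterion of Theorem \ref{thm:Gysin}: a map $\omega_\sharp F\to\omega_\sharp G$ between objects of $\logCI$ lifts to the log sheaves provided it intertwines the Gysin maps along $\{0\}\subset\A^1$. The remaining content is then the explicit $\dlog$ computation showing that the Gros--Mokrane Gysin map for $W_m\Lambda^n$ agrees with the Binda--R\"ulling--Saito Gysin map for $\Log(W_m\Omega^n)$ (Theorem \ref{thm:de-rham-witt-rsc-log}). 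Without this comparison, or some equivalent device, your proposal does not produce the transfers, and hence does not prove the second assertion of the theorem.
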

	Once we have this result at our disposal, we deduce that for all $S\in \Sm(k)$, the log crystalline cohomology of \cite{Hyodo}:\[
	R\Gamma_{\rm crys}(-/W(k))\colon \lSm(S)\to \cD(W(k))^{op} \qquad  X \mapsto \holim_m R\Gamma(X,W_m\Lambda^\bullet).
	\]
	factors through $\mathbf{log}\mathcal{DA}^{\mathrm{eff}}_{\mathrm{l\et}}(S,\Z)$ (resp. $\mathbf{log}\mathcal{DM}^{\mathrm{eff}}_{\mathrm{l\et}}(k,\Z)$ if $S=\Spec(k)$). Here $\lSm(S)$ is the category of fine and saturated log schemes that are log smooth and separated over $S$ equipped with the trivial log structure (see \cite{BPO}).
	
	Now, by \cite[Proposition 2.5.7]{DoosungA1log}, there is a fully faithful functor\[
	\mathcal{DM}^{\mathrm{eff}}(k,\Z)\xrightarrow{\omega^*} \mathrm{\log}\mathcal{DM}^{\mathrm{eff}}(k,\Z)
	\]
	from the category of Voevodsky motives to the category of log motives. We then define $R\Gamma_p$ as the composition
	\begin{equation}\label{eq:RGammap}
		\Sm(k)\xrightarrow{M^{V}} \mathcal{DM}^{\mathrm{eff}}(k,\Z)\xrightarrow{\omega^*} \mathrm{\log}\mathcal{DM}^{\mathrm{eff}}(k,\Z)\xrightarrow{L_{\loget}} \mathrm{\log}\mathcal{DM}^{\mathrm{eff}}_{\loget}(k,\Z) \xrightarrow{R\Gamma_{\rm crys}} \cD(W(k))^{op}
	\end{equation}
	The canonical map $R\Gamma_p(X)\to R\Gamma_{\rm crys}(X)$ is induced by the $\A^1$-localization functor $M(X,\triv)\to \omega^*M^{\A^1}(X)$, and the canonical factorization through $R\Gamma_{\rm crys}(\ol{X},\partial X)$ as in Theorem \ref{thm:man-intro-absolute} is induced by the factorization \[
    M(X)\to M(\ol{X},\partial X)\to \omega^*M^{\A^1}(X).
    \]
	The key technical point of the proof of Theorem \ref{thm:intro-crys-S} is the transfer structure that is needed in order to exploit  \cite[Theorem 8.2.16]{BPO}. This will follow from a comparison with the sheaf with transfers $\Log(W_m\Omega^n)$, where the functor $\Log$ was constructed in \cite{shujilog} for the reciprocity sheaves of \cite{KSY}: the proof will follow from a comparison of two Gysin sequences, after some general properties on $\bcube$-invariant $\dNis$-sheaves which generalize the work of Morel \cite{Morel} on $\A^1$-invariant sheaves.
	
	If moreover we assume that $k$ satisfies strong resolutions of singularities as in Notation \ref{nota:RS}, by \cite[Theorem 8.2.16]{BPO} there is an equivalence
	\begin{equation}\label{eq:comp-intro}
		\omega^*M^{\A^1}(X)\simeq M(\ol{X},\partial X),
	\end{equation}
	where $\ol{X}$ is \emph{any} smooth Cartier compactification of $X$ with $\ol{X}-X$ a simple normal crossing divisor that supports the log structure $\partial X$. This implies that there is a canonical equivalence:\[
	R\Gamma_p(X)\simeq R\Gamma_{\rm cris}(\ol{X},\partial X)
	\]
	that does \emph{not} depend on the choice of $(\ol{X},\partial X)$, and the canonical map $R\Gamma_p(X)\to R\Gamma_{\rm cris}(X)$ agrees with the map $R\Gamma_{\rm cris}(\ol{X},\partial X)\to R\Gamma_{\rm cris}(X)$.
	
	In particular, we conclude that \eqref{eq:coh-well-defined-intro} is well defined and satisfies \ref{(i)} and \ref{(ii)}. Finally, the property \ref{(iii)''} follows from a comparison between tame and log-\'etale coverings due to Fujiwara and Kato (see Lemma \ref{lem:tame-vs-log}), and the refined motivic properties follow from the fact that \eqref{eq:comp-intro} implies that the functor $\omega^*$ is monoidal.
	
	We remark that resolutions of singularity is only used in the explicit computation of the object $\omega^*M^{\A^1}(X)$ in $\logDM(k,\Z)$. We believe that in fact one can prove that $R\Gamma_p$ satisfies \ref{(i)}, \ref{(ii)} and \ref{(iii)''} without any assumption on resolution of singularities, provided that one computes $\omega^*M^{\A^1}(X)$. This is a future work in progress, which we believe to be linked with a suitable definition of a \emph{tame} motivic homotopy type of $X$, where tame is in the sense of \cite{HS2020}. Moreover, we remark that alterations are probably not useful for this computation, as we are not inverting $p$.
	
	\subsection*{Acknowledgements}
	The author would like to thank J. Ayoub for great advice and feedback about some delicate points of Section \ref{sec:complements}, for pointing out some mistakes in previous versions of the paper and for suggesting the use of residue maps. He also thanks F. Binda, V. Ertl, D. Park, K. R\"ulling, S. Saito and P.A. {\O}stv{\ae}r for many valuable discussions and comments, together with the anonymous referees for their meticulous analysis of the paper, providing helpful comments which filled some gaps in the arguments and led to an improved presentation. This project started while the author was a visiting fellow at the University of Milan, the main results were obtained while the author was a postdoc supported by the RCN project \emph{EMOHO} at the University of Oslo, and the final version was settled with he support of the MSCA-PF \emph{MIPAC} carried out at the University of Milan. The author is very thankful for the hospitality and the great work environment. 
	\begin{nota}
 For space constrains reasons, if the base field or the coefficients are evident or not important, we will often omit them (e.g. we will write $\Sh_{\Nis}^{\tr}$ instead of $\Sh_{\Nis}^{\tr}(k,A)$ for a base field $k$ and a ring $A$ that are evident from the context).
 \end{nota}
	\begin{nota}\label{nota:big-to-small}
		For $X\in \Sm(k)$, we let $X_{\Zar}$ (resp $X_{\Nis}$) be the small Zariski (resp. Nisnevich) site of $X$. 
		
		For $F\in \PSh(k)$, we let $F_{X}$ be the presheaf on $X_{\Nis}$ such that for any \'etale map $U\to X$,\[
		F_{X}(U) = F(U).
		\]
		If $F\in \Sh_{\Zar}(k)$ (resp. $\Sh_{\Nis}(k)$), then by definition $F_X \in \Sh(X_{\Zar})$ (resp. $ \Sh(X_{\Nis})$).
		Similarly, for $X=(\ul{X},\partial X)\in \lSm(k)$ and $F\in \PSh^{\log}(k)$, we let $F_{X}$ be the presheaf on $\ul{X}$ such that for any \'etale map $U\to \ul{X}$, \[
		F_{X}(U) = F(U,\partial X_{|U}).
		\]
		If $F\in \Sh_{\sZar}^{\log}(k)$ (resp. $\Sh_{\sNis}^{\log}(k)$ or $\Sh_{\dNis}^{\log}(k)$), then by definition $F_X \in \Sh(X_{\Zar})$ (resp. $ \Sh(X_{\Nis})$).
	\end{nota}
	
	\section{Recollections}
	
	In this section, we recall the main results on logarithmic motives and reciprocity sheaves.

	\subsection{Recollections on log motives}\label{ssec:logDM} 
	We recall the construction of the $\infty$-category of logarithmic motives of \cite{BPO} and some properties. The standard reference for log schemes is \cite{ogu}. 
	We denote by ${\lSm}(S)$ the category of fs log smooth log schemes over the log scheme $(S,\triv)$. We are typically interested in the case where $S = \Spec(k)$. For $X\in {\lSm}(S)$, we will denote by $\ul{X}$ the underlying scheme of $X$, by $X^{\circ}$ the open subscheme where the log structure is trivial (we will refer to it as the \emph{trivial locus}), by $\partial X$ the log structure and by $|\partial X|$ its support, seen as a reduced closed subscheme of $X$. 
	
	Let $\SmlSm(S)$ be the full subcategory of $\lSm(S)$ having for objects $X\in \lSm(S)$ such that $\ul{X}$ is smooth over $S$. By e.g. \cite[Lemma A.5.10]{BPO}, then in this case $\partial X$ is supported on a strict normal crossing divisor on $\ul{X}$ and the log scheme $(\ul{X},\partial X)$  is isomorphic to the compactifying log structure associated to the open embedding $\ul{X}-|\partial X| \hookrightarrow \ul{X}$. If $D$ is a strict normal crossing divisor on $\ul{X}$, we will often write $(\ul{X},D)\in \SmlSm(S)$ meaning the log scheme with log structure supported on $D$.
	
	A morphism $f\colon X\to Y$ of fs log schemes is called \emph{strict} if the log structure on $X$ is the pullback log structure from $Y$. In case both $X$ and $Y$ are objects of $\SmlSm(S)$, this translate to an equality $\partial X = f^*(\partial Y)$ as reduced normal crossing divisors on $X$. If $Z$ is a closed subscheme of $\ul{X}$, we will often denote by $X-Z\subseteq X$ the strict open immersion $(\ul{X}-Z,\partial X_{|\ul{X}-Z})\hookrightarrow X$.
	
	We denote by $\PShlog(S, A)$ the category of presheaves of $A$-modules on $\lSm(S)$. It has naturally the structure of closed monoidal category. If $\tau$ is a Grothendieck topology on $\lSm(S)$ (see below), we write $\mathbf{Shv}^{\rm log}_\tau(S, A)$ for the full subcategory of $\PShlog(S, A)$ consisting of $\tau$-sheaves. 
	
	Let $\widetilde{\SmlSm}(S)$ be the category of fs log smooth $S$-schemes $(X,\partial X)$ which are essentially smooth over $S$, i.e. a limit $\lim_{i \in I} X_i$ over a cofiltered set $I$, 
	where $X_i \in \SmlSm(S)$ and all transition maps are strict \'etale (i.e. they are strict maps of log schemes such that the underlying maps $f_{ij}\colon \ul{X_i}\to \ul{X_j}$ are \'etale). For $X\in \SmlSm(S)$ and $x\in X$, let $\iota\colon\Spec(\mathcal{O}_{X,x})\to X$ and $\iota^h\colon\Spec(\mathcal{O}_{X,x}^h)\to X$ be the canonical morphism. Then we will denote by $X_x$ and $X_x^h$ respectively the localization $(\Spec(\mathcal{O}_{X,x}),\iota^*(\partial X))$ and henselization $(\Spec(\mathcal{O}_{X,x}^h),(\iota^h)^*(\partial X))$: both lie in $\widetilde{\SmlSm}(S)$.
	We frequently allow $F\in \PShlog(S,A)$ to take values on objects of $\widetilde{\SmlSm}(S)$ by setting
	$F(X) := \colim_{i \in I} F(X_i)$ for $X$ as above.

	For $\tau$ a Grothendieck topology on $\Sch(S)$, the $\emph{strict}$ topology $s\tau$ on $\SmlSm(S)$ is the Grothendieck topology generated by covers $\{e_i\colon X_i\to X\}$ such that $\underline{e_i}\colon \underline{X_i}\to \underline{X}$ is a $\tau$-cover and each $e_i$ is strict. 
	Moreover, recall that a morphism of fs log schemes $f : X \to Y$ is called Kummer-\'etale if it is exact and log \'etale, see \cite[Proposition A.8.4]{BPO}. A typical example is given by $(\A^1_k,0) \to (\A^1_k,0)$, $t\mapsto t^n$, where $n$ is coprime to $\mathrm{char}(k)$. The Kummer-\'etale topology is the topology generated by Kummer-\'etale covers: it is finer than the strict \'etale topology.
	Recall from \cite[3.1.4]{BPO} that a cartesian square of fs log schemes
	\[
	\begin{tikzcd}
		Y' \ar[r, "g'"]\ar[d, "f'"] & Y \ar[d, "f"]\\
		X' \ar[r, "g"] & X
	\end{tikzcd}
	\]
	is a \emph{dividing distinguished square} if $Y'=X'=\emptyset$ and $f$ is a surjective proper log \'etale monomorphism (see \cite[Remark 2.2.4]{BPO-SH} for more details on log modifications). The collection of dividing distinguished squares forms a cd structure on $\SmlSm(S)$, called the \emph{dividing cd structure}. For $\tau$ a Grothendieck topology on $\Sch(S)$, the $\emph{dividing}$ topology $d\tau$ on $\SmlSm(k)$ is the topology on $\SmlSm(k)$ generated by the strict topology $s\tau$ and the dividing cd structure. Moreover, we denote by $\loget$ the \emph{log-\'etale topology}, i.e. topology generated by the Kummer-\'etale topology and the dividing cd structure. We denote by $\mathbf{Shv}_{d\tau}^{\rm log}(S, A)\subset \PShlog(S, A)$  (resp. $\mathbf{Shv}_{\mathrm{\mathrm{k\et}}}^{\rm log}(S, A)$, resp.  $\mathbf{Shv}_{\mathrm{\loget}}^{\rm log}(S, A)$) the subcategory of $d\tau$-sheaves (resp. $\mathrm{k\et}$-sheaves, resp. $\loget$-sheaves), with exact sheafification functors $a_{d\tau}$, (resp. $a_{\mathrm{k\et}}$, resp. $a_{\loget}$). By \cite[Theorem 1.2.2]{BPO}, for $\tau\in \{\Nis,\et\}$, $F$ an $s\tau$-sheaf (resp. a $\mathrm{k\et}$-sheaf) we have\[
	H^n_{d\tau}(X,a_{d\tau}F)\simeq \colim_{Y} H^n_{s\tau}(Y,F)\quad (\textrm{resp.} H^n_{\loget}(X,a_{\loget}F)\simeq \colim_{Y} H^n_{\mathrm{k\et}}(Y,F))
	\]
	where the colimit runs over all log modifications $Y\to X$.
	Following \cite{BPO}, we denote by $\lCor(k)$ the category of finite log correspondences over $k$. It is a variant of the Suslin--Voevodsky category of finite correspondences $\Cor(k)$. It has the same objects as $\SmlSm(k)$\footnote{Notice that this notation conflicts with the notation of \cite{BPO} where the objects were the same as $\lSm(k)$, although the categories of sheaves are the same in light of \cite[Lemma 4.7.2]{BPO}}, and morphisms are given by the free abelian subgroup
	\[ \lCor(X,Y) \subseteq  \Cor(X- \partial X, Y- \partial Y)\]
	generated by elementary correspondences  $V^o\subset (X- \partial X) \times (Y- \partial Y)$ such that the closure $V\subset \ul{X}\times \ul{Y}$ is finite and surjective over (a component of) $\ul{X}$ and such that there exists a morphism of log schemes $V^N \to Y$, where $V^N$ is the fs log scheme whose underlying scheme is the normalization of $V$ and whose log structure is given by the inverse image log structure along the composition $\underline{V^N} \to \ul{X}\times \ul{Y} \to \ul{X}$. See \cite[2.1]{BPO} for more details, and for the proof that this definition gives indeed a category. 
	
	Additive presheaves (of $A$-modules) on the category $\lCor(k)$ will be called \emph{presheaves (of $A$-modules) with log transfers}. Write $\PShltr(k, A)$ for the resulting category, and by $\mathbf{Shv}_{d\tau}^{\rm ltr}(k, A)\subset \PShltr(k, A)$  (resp.  $\mathbf{Shv}_{\mathrm{\loget}}^{\rm ltr}(k, A)$) the subcategory of $d\tau$ (resp. $\loget$) sheaves with transfers. By \cite[Theorem 4.5.7]{BPO}, the sheafification $a_{d\tau}$ (resp $a_{\loget}$) preserves transfers.
	
	Let $T\in\{\dNis, \mathrm{d\et},\loget\}$. 
	Let $\cD(\mathbf{Shv}_{T}^{\rm log}(S, A))$, resp. $\cD(\mathbf{Shv}_{T}^{\rm ltr}(k, A))$, be the derived stable $\infty$-category of the Grothendieck abelian category $\mathbf{Shv}_{T}^{\rm log}(S, A)$, resp. $\mathbf{Shv}_{T}^{\rm ltr}(k, A)$, as in \cite[Section 1.3.5]{HA}: it is equivalent to the underlying $\infty$-category of the model category $\Cpx(\PShlog(S, A))$, resp. $\Cpx(\PSh^{\rm ltr}(k, A))$, with the $T$-local model structure used in \cite{BPO} and \cite{BindaMerici}. 
	
	Finally (see \cite[Section 5.2]{BPO}), let $\bcube:=(\P^1,\infty)$.
	\begin{defn} The stable $\infty$-category $\mathbf{log}\mathcal{DA}^{\textrm{eff}}_T(S,A)$ (resp. $\mathbf{log}\mathcal{DM}^{\textrm{eff}}_T(k,A)$) is the localization of the stable $\infty$-category $\cD(\mathbf{Shv}_{T}^{\rm log}(S, A))$ (resp. $\cD(\mathbf{Shv}_{T}^{\rm ltr}(k, A))$) with respect to the class of maps
		\[
		(a_{T}A(\bcube\times X))[n]\to (a_{T}A(X))[n]\quad(\textrm{resp. } (a_{T}A_{\ltr}(\bcube\times X))[n]\to (a_{T}A_{\ltr}(X))[n])
		\]
		for all $X\in \SmlSm(k)$ and $n\in \Z$. 
		We let $L^{\log}_{(T,\bcube)}$ (resp. $L^{\ltr}_{(T,\bcube)}$)
		be the localization functor and for $X\in \SmlSm(k)$, we will let $M_T^{\log}(X)=L^{\log}_{(T,\bcube)}(a_TA(X))$ (resp. $M_T^{\ltr}(X) = L^{\ltr}_{(T,\bcube)}(a_TA_{\ltr}(X))$).
	\end{defn}
	
	If $T=\dNis$, we will often drop it from the notation. We recall the following result \cite[Theorem 5.7]{BindaMerici}:
	\begin{thm}\label{thm:t-structure}
		The standard $t$-structures of $\cD(\Shv^{\rm log}_{\dNis}(k, A))$ and $\cD(\Shv^{\rm ltr}_{\dNis}(k, A))$ induce accessible $t$-structures on $\mathbf{log}\mathcal{DA}^{\mathrm{eff}}(k,A)$ and $\logDM(k,A)$ compatible with filtered colimits in the sense of \cite[Definition 1.3.5.20]{HA}, called the \emph{homotopy $t$-structures.}
	\end{thm}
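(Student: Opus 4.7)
The plan is to transfer the standard $t$-structure on $\cD(\Shv^{\log}_{\dNis}(k,A))$ to the Bousfield localization $\mathbf{log}\mathcal{DA}^{\mathrm{eff}}(k,A)$ through the localization functor $L := L^{\log}_{(\dNis,\bcube)}$ with fully faithful right adjoint $i$; the case of $\logDM(k,A)$ is handled identically, replacing strict dividing Nisnevich sheaves with sheaves with log transfers and using \cite[Theorem 4.5.7]{BPO} to ensure that the sheafification preserves transfers.

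First I would invoke the fact that, since $\Shv^{\log}_{\dNis}(k,A)$ is a Grothendieck abelian category, the standard $t$-structure on its derived $\infty$-category is accessible and compatible with filtered colimits by \cite[Proposition 1.3.5.21]{HA}. Next, by the standard criterion for inheritance of $t$-structures under reflective localizations, it suffices to check that the subcategory of $\bcube$-local objects in $\cD(\Shv^{\log}_{\dNis}(k,A))$ is preserved by the truncation functors $\tau_{\geq n}$ and $\tau_{\leq n}$. Unwinding definitions, this reduces to the following \emph{strict $\bcube$-invariance} statement: the cohomology sheaves $\sH^i(F)$ of a $\bcube$-local complex $F$ are themselves $\bcube$-invariant dividing Nisnevich sheaves.

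The main obstacle is precisely this strict $\bcube$-invariance claim, which is the logarithmic analogue of Morel's strict $\A^1$-invariance theorem. My approach would be to exploit the explicit cubical Suslin-type model $C^{\bcube}_*(F)$ for the $\bcube$-localization, then use the dividing Nisnevich hypercohomology spectral sequence together with the behaviour of $\dNis$-cohomology under dividing distinguished squares and pullbacks along projections $\bcube \times X \to X$ to propagate $\bcube$-invariance from the total complex to each individual cohomology sheaf. Once this is in place, the induced $t$-structure on $\mathbf{log}\mathcal{DA}^{\mathrm{eff}}(k,A)$ is automatically accessible (being transported by the colimit-preserving localization $L$ from an accessible $t$-structure on $\cD(\Shv^{\log}_{\dNis}(k,A))$) and compatibility with filtered colimits follows because $i$ preserves filtered colimits, as $\bcube$-local objects form a subcategory of $\cD(\Shv^{\log}_{\dNis}(k,A))$ closed under filtered colimits (using that $\bcube$ is compact). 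I would expect the bulk of the technical work to sit in the log version of strict invariance, for which the combination of the $\dNis$-topology with the dividing cd structure is essential.
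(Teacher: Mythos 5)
You have correctly isolated where the difficulty lies, but the step you rest everything on --- that the $\bcube$-local objects of $\cD(\Shv^{\rm log}_{\dNis}(k,A))$ are stable under the standard truncations, equivalently that every cohomology sheaf of a $\bcube$-local complex is strictly $\bcube$-invariant --- is a genuine gap, and the method you sketch cannot close it. First, the cubical Suslin-type complex $C^{\bcube}_*(F)$ is not known to compute the $(\dNis,\bcube)$-localization: $\bcube=(\P^1,\infty)$ is not an interval object, and even in the $\A^1$-setting with transfers the identification of the localization with the Suslin complex is a \emph{consequence} of Voevodsky's strict $\A^1$-invariance theorem, not an input to it. (This is precisely why Appendix \ref{appendix-A} of this paper has to develop ad hoc criteria, such as condition \eqref{star}, merely to compute $h_0^{\bcube}$ of a few specific objects.) Second, the descent spectral sequence only transports invariance from the $E_2$-page to the abutment; deducing $\bcube$-invariance of each individual cohomology sheaf from $\bcube$-invariance of hypercohomology is exactly the content of the Morel/Voevodsky strict invariance theorems, whose proofs require heavy geometric input (pretheories and moving lemmas, respectively strongly $\A^1$-invariant sheaves and Gersten resolutions) with no available log analogue. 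Your reduction therefore lands on a statement at least as deep as the theorem itself, and your proposal contains no proof of it.

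The argument in the cited source avoids this issue. The connective part of the homotopy $t$-structure is \emph{generated}, via \cite[Prop. 1.4.4.11]{HA}, by the objects $M(X)$, $X\in\SmlSm(k)$, under colimits and extensions inside the presentable stable $\infty$-category $\logDM(k,A)$; accessibility is then automatic. The coconnective part is identified with the full subcategory of local objects whose underlying complex of sheaves lies in the coconnective part of $\cD(\Shv^{\rm ltr}_{\dNis}(k,A))$ --- this uses only the easy direction of the descent spectral sequence --- and compatibility with filtered colimits follows because the localization is taken at maps between compact objects, so that the inclusion of $\bcube$-local objects preserves filtered colimits (the point recalled in the proof of Corollary \ref{cor:motivic}). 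The stability of local objects under truncation, and with it the identification of the heart with $\logCI$, is then deduced \emph{a posteriori} from the connectivity of the generators rather than serving as the starting point. The only parts of your proposal that survive are the appeal to \cite[Prop. 1.3.5.21]{HA} for the ambient derived category and the compactness argument for filtered colimits.
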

	
	We denote by $\mathbf{logCI}(k,A)$ (resp. $\mathbf{logCI}^{\ltr}(k,A)$) its heart, which is then identified with the category of strictly $\bcube$-invariant $\dNis$-sheaves (resp. $\dNis$-sheaves with log
    transfers) and it is a Grothendieck abelian category. The inclusions
	\[
	\mathbf{logCI}(k,A)\hookrightarrow \Shv^{\rm log}_{\dNis}(k, A)\quad 
	\mathbf{logCI}^{\ltr}(k,A)\hookrightarrow \Shv^{\rm ltr}_{\dNis}(k, A)
	\]
	admit left adjoints $h_0^{\bcube}, h_0^{\bcube,\ltr}$, both given by the same formula $F\mapsto \pi_0(L_{(d\tau,\bcube)}(F[0]))$, and right adjoints $h^0_{\bcube}, h^0_{\bcube,\ltr}$  (see \cite[Proposition 5.8]{BindaMerici}), in particular this inclusions are exact.
	
	Recall from  \cite[(4.3.4)]{BPO} that the functor $\omega:X\mapsto X^{\circ}$ induces an adjunction
	\begin{equation}\label{omegaadjunction}
		\begin{tikzcd}
			\Shv_{d\tau}^{\log}(k,A)\arrow[rr,shift left=1.5ex,"\omega_\sharp"]&& \Shv_{\tau}(k,A)\arrow[ll,"\omega^*"]
		\end{tikzcd}
	\end{equation} 
	where for $Y\in \Sm(k)$, $\omega_{\sharp} F(Y) = F(Y,\textrm{triv})$ and for $X\in \Sm(k)$, $\omega^*F(X)=F(\underline{X}-|\partial X|)$. Moreover, since $\omega$ is monoidal by construction, $\omega_\sharp$ is monoidal. It is immediate that $\omega^*$ and $\omega_\sharp$ preserve transfers, and induce adjunctions of stable $\infty$-categories:\[
	\begin{tikzcd}
		\cD(\Shv_{d\tau}^{\log}(k,A))\arrow[r,shift left=1.5ex,"\omega_\sharp"]& \cD(\Shv_{\tau}(k,A))\arrow[l,"\omega^*"] &&\cD(\Shv_{d\tau}^{\ltr}(k,A))\arrow[r,shift left=1.5ex,"\omega_\sharp"]& \cD(\Shv_{\tau}^{\tr}(k,A))\arrow[l,"\omega^*"] 
	\end{tikzcd}
	\]
	As observed in \cite[Proposition 5.11]{BindaMerici}, the adjunctions above induce:\[
	\begin{tikzcd}
		\mathbf{log}\mathcal{DA}^{\rm eff}(k,A)\arrow[r,shift left=1.5ex,"L\omega_\sharp"]& \mathcal{DA}^{\rm eff}(k,A) \arrow[l,"\omega^*"] &&\logDM(k,A)\arrow[r,shift left=1.5ex,"L^{\ltr}\omega_\sharp"]& \mathcal{DM}^{\rm eff}(k,A),\arrow[l,"\omega^*"] 
	\end{tikzcd}
	\]
	and the functors $\omega^*$ are $t$-exact for the homotopy $t$-structures. By \cite[Proposition 2.5.7]{DoosungA1log}, both $\omega^*$ are fully faithful and their essential images agree with the $(\A^1,\triv)$-local categories.
	\subsection{Recollection on reciprocity sheaves}
	
	Recall the abelian category of reciprocity sheaves $\RSC_{\Nis}(k)\subseteq \Sh_{\Nis}^{\rm tr}(k)$, defined in equivalent ways in \cite{KSY}, \cite{KSY-RecII} and \cite{RulSaito}.  It contains Voevodsky's category $\HI^{\tr}$of $\A^1$-invariant sheaves and many other interesting non-$\A^1$-invariant sheaves like $\Omega^n$ and $W_m\Omega^n$.
	
	By \cite{shujilog} there is a fully faithful and exact functor \[
	\Log\colon \RSC_{\Nis}(k,A)\to \logCI^{\ltr}(k,A)
	\]
	such that if $F\in \HI^{\tr}$ then $\Log(F)=\omega^*F$.
	We recall the construction of the Gysin map in \cite{BindaRuellingSaito}, in the special case that will be needed later. Let $F\in \RSC_{\Nis}(k)$: we put $\gamma^1F := \uHom_{\RSC_{\Nis}}(\G_m,F)$. Let $X\in \Sm(k)$  and $i\colon D\subseteq X$ a smooth divisor. By \cite[Theorem 5.10]{BindaRuellingSaito} there is a cycle class map of sheaves on the small Nisnevich site of $X$ (see Notation \ref{nota:big-to-small}):
	\begin{equation}\label{eq:BRS-cycle-class}
		c(D)\colon \gamma^1 F_{X}\to R^1\ul{\Gamma}_D (F_{X}),
	\end{equation}
	which can be described as follows. Let $j\colon X-D\to X$ be the open immersion so that $R^1\ul{\Gamma}_D (F_{X})\cong j_*F_{X-D}/F_{X}$. If $t$ is a local equation of $D$ in $X$, then let $f_t\colon X-D \to \A^1-\{0\}$ be the induced morphism and let $\Delta\colon X-D \to X\times(X-D)$ be the composition of the diagonal and $j\times id$. Let $a\in\gamma^1F(X)$ that restricts to $\tilde a\in F(X\times (\A^1-\{0\}))$ via the surjective map $\Z_{\tr}(\A^1-\{0\})\to \G_m$. Then by \cite[(5.10.3)]{BindaRuellingSaito} we have that $c(D)(a)$ is the class of $\Delta^*(id\times \Gamma_t)^*(\tilde a)$ in $j_*F_{X-D}/F_{X}$.
	
	Assume now that $i\colon D\to X$ has a retraction $q\colon X\to D$. Then we have a map:
	\begin{equation}\label{eq:BRS-local-Gysin}
		i_*\gamma^1F_D \to i_*q_* \gamma^1F_X \xrightarrow{i_*q_*\eqref{eq:BRS-cycle-class}} i_*q_* R^1\ul{\Gamma}_D (F_{X})= i_*q_*i_*R^1i^! (F_{X}) \cong i_*R^1i^! (F_{X}) = R^1\ul{\Gamma}_D (F_{X}).
	\end{equation}
	By composing with the ``forget support" map and by shifting $-1$ one gets a map in the homotopy category $D(\Sh_{\Nis}(X))$:
	\begin{equation}\label{eq:BRS-Gysin}
		g^{\rm{BRS}}_{D/X}\colon i_*\gamma^1F_D[-1] \to F_{X}
	\end{equation}
	which by \cite[Theorem 7.16]{BindaRuellingSaito} corresponds to the extension
	\begin{equation}\label{eq:BRS-Gysin-ext}
		0\to  F_{X} \to \Log(F)_{(X,D)}\to i_*\gamma^1F_D \to 0 .
	\end{equation}

	\subsection{Recollection on log de Rham--Witt sheaves and complex}\label{ssec:logdRW}

	Let $k$ be a field of characteristic $p>0$ and let $S$ be a Noetherian $k$-scheme and $W_m(S)$ the sheaf of $m$-truncated Witt vectors over $S$, with Frobenius $\sigma$. 
	
	For $X\in \SmlSm(S)$ and $m,q\geq 0$, we consider $W_m\Lambda^q_{X/(S,\triv)}$ the sheaf on $\ul{X}_{\Zar}$ of logarithmic differentials of \cite{matsuue}. For $\partial X$ trivial, it agrees with $W_m\Omega^q_{\ul{X}/S}$ of \cite{IllusiedRW}, and if $q=0$ it agrees with the $m$-truncated Witt sheaf of $\ul{X}$ (in particular, it does not depend on $\partial X$). The assignment\[
	X\mapsto \Gamma(X,W_m\Lambda^q_{X/(S,\triv)})
	\]
	defines a $\set$-sheaf on $\SmlSm(S)$ by \cite[Proposition 3.7]{matsuue}, that we will briefly indicate as $W_m\Lambda^q_S$. 
	It comes equipped with usual maps $F$, $V$ and $d$, which make $\{W_m\Lambda^{\bullet}_S\}_{m}$ a universal log $F$-$V$ procomplex. Notice that in \cite[7.2]{matsuue} (and only in that section), this sheaf is indicated as $W_m\widetilde{\Lambda}^q$, and $W_m\Lambda^q$ is the sheaf over a base with log structure. We will only work over a base with trivial log structure, so the extra notation $W_m\widetilde{\Lambda}^q$ is unnecessary.
	
	Recall the weight filtration on $W_m\Lambda^q$:\[
	P_jW_m\Lambda^q_{X} :=\Im(W_m\Lambda^j_{X}\otimes W_m\Omega^{q-j}_{\ul{X}}\to W_m\Lambda^q_{X})
	\]
	For $X\in \SmlSm(S)$, let $\partial X^{(j)}$ be the disjoint union of the intersections of the $j$ components of $|\partial X|$ and let $\iota^{(j)}\colon \partial X^{(j)}\to \ul{X}$ be the inclusion. By convention, we put $\partial X^{(0)}:=\ul{X}$. 
	For all $j$, we have exact sequences of sheaves on $\ul{X}_{\Nis}$ (see \cite[Lemma 8.4]{matsuue} and \cite[1.4]{mokrane}):
	\begin{equation}\label{eq:weight-es}
		0\to P_{j-1}W_m\Lambda^q_{X}\to P_{j}W_m\Lambda^q_{X}\xrightarrow{{\rm Res}} \iota^{(j)}_*W_m\Omega^{q-j}_{\partial X^{(j)}}\to 0
	\end{equation}
	In case $\partial X$ has only one component $D$, then the sequence above gives an exact sequence of sheaves on $\ul{X}_{\Nis}$
	\begin{equation}\label{eq:weight-es-sm}
		0\to W_m\Omega^q_{\ul{X}} \to W_m\Lambda^n_{X} \to \iota^{(1)}_*W_m\Omega^{q-1}_{D} \to 0.
	\end{equation}
	The class of this extension gives a map in  the homotopy derived category $D(\Sh(\ul{X}_{\Nis}))$:
	\begin{equation}\label{eq:Gros-Gysin}
		g^{\rm{Gros}}_{D/X}\colon i_*W_m\Omega^{q-1}_{D}[-1]\to  W_m\Omega^q_{\ul{X}}.
	\end{equation}
	By \cite[4.6]{mokrane}, this map agrees with the Gysin map of \cite[II, Prop 3.3.9]{Gros1985}: if $D$ is locally given by the equation $t=0$, then for any local lift $D_m\hookrightarrow X_m$ to $W_m$, for any $[\omega]\in W_m\Omega^{q-1}_{D}$ and any lift $\omega\in \Omega^{q-1}_{D_m/W_m}$, then $g^{\rm{Gros}}_{D/X}$ maps $[\omega]$ to the image of $\omega\wedge \dlog(t) \in \ul{H}^1_D(\Omega^q_{X_m/W_m})$. By composing with the ``forget support" map and by shifting $-1$ one gets $\eqref{eq:Gros-Gysin}$.
	
	In general, we consider the cohomology theory\[
	R\Gamma(-,W\Lambda_S)\colon \SmlSm(S) \to \cD(W(k))^{op}\qquad X\mapsto \holim_m R\Gamma(X,W_m\Lambda_{X/S}^{\bullet})
	\]
	Recall that the Raynaud ring $R(k)$ of Ekhedal \cite{EkedahlII} is the graded non-commutative $W(k)$-algebra $R^0\oplus R^1$ generated by elements $F$ and $V$ in degree $0$ and $d$ in degree $1$,
	subject to the relations $FV = VF =p$, $Fa = a^\sigma F$, $aV = Va^{\sigma}$ , $ad = da$ $(a\in W(k))$, $d^2 =0$, $FdV =d$. Recall that the category of (left-)modules over the Raynaud ring admits a universal tensor product $\star$ and a derived-complete tensor product $\widehat{\star^L}$ in $\cD(R(k))$. We recall that similarly to \cite[(2.6.1.2) and (5.1.1)]{Illusiefinite}, the dga structure of the log de Rham--Witt complex and the \v Cech comparison give rise to a product map in $\cD(W(k)[d])$\[
	R\Gamma(X,W\Lambda_{X/S})\otimes^{L} R\Gamma(Y,W\Lambda_{X/S}) \to R\Gamma(X\times Y,W\Lambda_{X/S})
	\]
	which is functorial in $X$ and $Y$. Since $R\Gamma(X\times Y,W\Lambda)\in \cD(R(k))$ and is derived-complete by construction, the product map induces a map
	\begin{equation}\label{eq:kunneth}
		R\Gamma(X,W\Lambda)\widehat{\star^{L}} R\Gamma(Y,W\Lambda) \to R\Gamma(X\times Y,W\Lambda)
	\end{equation}
	which is functorial in $X$ and $Y$ by \cite[I. Theorem 6.2]{EkedahlII}. If $X$ and $Y$ have trivial log structure and $S=\Spec(k)$, the previous map is an equivalence by \cite[Corollary 1.2.5]{EkedahlII}\footnote{a small descent argument is needed, see \cite[Theorem 5.1.2]{Illusiefinite}}. We will prove in Proposition \ref{prop:kunneth} that \eqref{eq:kunneth} is always an equivalence using the motivic properties of the sheaves $W_m\Lambda^n$. 
	
	\section{Complements on \texorpdfstring{$\bcube$}{box}-local sheaves}\label{sec:complements}
	
	In this section, we prove some technical results on $\bcube$-local sheaves, generalizing the results of \cite{Morel} on $\A^1$-local sheaves. First we show the following immediate result:
	\begin{lemma}\label{lem:cohomology-open-P1}
		Let $U\subseteq \A^1$ be dense open. Then for all $F\in \logCI(k,A)$ and $i>0$\[
		H^i_{\dNis}(U,F) = 0
		\]
		\begin{proof}
            For $i>1$, this follows by cohomological dimension of the $\dNis$ topology (see \cite[Corollary 5.1.4]{BPO}).  
			Let $F_{\bcube}$ and $F_{U}$ as in Notation \ref{nota:big-to-small}. For $j\colon U\subseteq \P^1$, by \cite[Theorem 5.10]{BindaMerici} we have a short exact sequence in $\Sh(\P^1_{\Nis})$\[
			0\to F_{\bcube}\to j_*F_{U}\to \coker\to 0
			\]
			where $\coker$ is supported on $\P^1-U$, which has dimension 0, so $H^1_{\Nis}(\P^1,\coker) = 0$. Since $F\in \logCI$, $H^1_{\Nis}(\P^1,F_{\bcube}) = H^1_{\dNis}(\bcube,F) = 0$. Since we have that $R^ij_*=0$ for $i>0$, we conclude that\[
			H^1_{\dNis}(U,F) = H^1_{\Nis}(\P^1,j_*F_{U}) = 0
			\]
		\end{proof}
	\end{lemma}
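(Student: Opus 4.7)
The plan is to split the argument according to the value of $i$. For $i \geq 2$, the open $U \subseteq \A^1$ carries the trivial log structure, hence admits no nontrivial log modifications; by \cite[Theorem 1.2.2]{BPO} the defining colimit for $H^i_{\dNis}(U, F)$ collapses to $H^i_{\Nis}(U, F_U)$, which vanishes for $i \geq 2$ by Grothendieck's cohomological dimension bound since $U$ has Krull dimension $1$.

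The substantive case is $i = 1$. I would fix the open immersion $j\colon U \hookrightarrow \P^1$ of underlying schemes and regard $\P^1$ with the log structure at $\infty$, yielding $\bcube$. Since $j$ is an affine open immersion, $R^{>0} j_* F_U = 0$ as a Nisnevich sheaf on $\P^1$, so one reduces to $H^1_{\Nis}(U, F_U) = H^1_{\Nis}(\P^1, j_* F_U)$. Using the conventions of Notation \ref{nota:big-to-small}, I would then exhibit a short exact sequence of Nisnevich sheaves on $\P^1$
\[
0 \longrightarrow F_{\bcube} \longrightarrow j_* F_U \longrightarrow Q \longrightarrow 0,
\]
in which the left-hand map is restriction along the strict open immersion $V \cap U \hookrightarrow V$ for $V \to \P^1$ \'etale (the intersection $V \cap U$ acquires trivial log structure, making the restriction sensible in $\lSm$), and $Q$ is by construction supported on the finite reduced subscheme $\P^1 - U$.

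From here the conclusion is a short diagram chase in the associated long exact sequence. The quotient $Q$ has $0$-dimensional support, so $H^1_{\Nis}(\P^1, Q) = 0$. On the other hand, strict $\bcube$-invariance of $F$ yields $H^1_{\dNis}(\bcube, F) = 0$, and since $\bcube$ admits no nontrivial log modifications (its underlying scheme being proper and the log structure being supported at a single point) this equals $H^1_{\Nis}(\P^1, F_{\bcube})$. The long exact sequence of the displayed extension then forces $H^1_{\Nis}(\P^1, j_* F_U) = 0$, finishing the case $i = 1$.

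The main point requiring care is the construction of the map $F_{\bcube} \to j_* F_U$ and the verification that it is injective at the sheaf level. Well-definedness is functoriality of $F$ along the strict open immersions $V \cap U \hookrightarrow V$; injectivity is a Nisnevich separation statement, which holds because $F$ is a $d\Nis$-sheaf and each $V \cap U$ is dense in $V$. Everything else is formal manipulation of the resulting long exact sequence, and no further properties of $F$ beyond strict $\bcube$-invariance are invoked.
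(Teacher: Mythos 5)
Your argument for $i=1$ is exactly the paper's: the extension $0 \to F_{\bcube} \to j_*F_U \to Q \to 0$ with $Q$ supported in dimension zero, combined with $H^1_{\Nis}(\P^1, F_{\bcube}) = H^1_{\dNis}(\bcube, F) = 0$ from strict $\bcube$-invariance; your explicit treatment of $i \geq 2$ via cohomological dimension fills in what the paper leaves implicit. One caveat on your "main point requiring care": injectivity of $F_{\bcube} \to j_*F_U$ is \emph{not} a consequence of $F$ being a $\dNis$-sheaf together with density of $V\cap U$ in $V$ (restriction of a Nisnevich sheaf to a dense open need not be injective --- consider a skyscraper supported on the complement); it is the purity property of objects of $\logCI(k,A)$, i.e.\ \cite[Theorem 5.10]{BindaMerici}, which is precisely the reference the paper invokes at this step.
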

	
	We give the following definition, extending  \cite[Definition 1.1]{Morel}:
	\begin{defn}\label{def:unramified}
		Let $F\in \PShlog(k,A)$. We say that $F$ is \emph{unramified} if  the following statements hold:
		\begin{enumerate}
			\item For any $X\in \SmlSm(k)$ with irreducible components $\{X_\alpha\}$, the obvious map $F(X)\to \prod_\alpha F(X_{\alpha})$ is a bijection.
			\item For any $X\in \SmlSm(k)$ and any dense strict open subscheme $U\subseteq X$ the restriction map $F(X) \to F(U)$ is injective.
			\item For any $X \in \SmlSm(k)$, irreducible with generic point $\eta_X$, the injective map $F(X) \hookrightarrow \bigcap_{x\in X^{(1)}} F(X_x)$ is a bijection of subobjects of $F(\eta_X)$.
		\end{enumerate}
	\end{defn}
	
	In this section, we will show the following result:
	
	\begin{thm}\label{thm:unramified}
		Let $F\in \logCI(k,A)$. Then $F$ is unramified.
	\end{thm}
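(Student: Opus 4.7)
Condition (1) is immediate from the $\dNis$-sheaf property: the decomposition of $X\in \SmlSm(k)$ into irreducible components is a Zariski cover with empty pairwise intersections, giving $F(X) = \prod_\alpha F(X_\alpha)$. I assume $X$ irreducible in what follows.

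The heart of the argument is condition (2), which I would establish by Noetherian induction on $\dim Z$ for $Z = X\setminus U$, combined with $\dNis$-descent: it suffices to show that any $s\in F(X)$ vanishing on $U$ vanishes in each stalk $F(X_x^h)$. For $x\in U$ this is automatic. For $x\in Z$, the induction hypothesis and strict Nisnevich shrinking reduce us to the case where $Z$ is smooth of codimension one at $x$ and disjoint from $|\partial X|$ near $x$; a log-smooth variant of Gabber's presentation lemma then presents $(X_x^h, Z)$, up to strict Nisnevich refinement, as $(\A^1\times Y, \{0\}\times Y)$ with $Y$ essentially smooth. The claim thus reduces to injectivity of $F(\A^1\times Y)\to F(\mathbb{G}_m \times Y)$, which I would deduce from the localization sequence for $\{0\}\subseteq \A^1$, using Lemma~\ref{lem:cohomology-open-P1} for the cohomological vanishing and $\bcube$-invariance (which identifies $F(\bcube\times Y) \cong F(Y)$) to control sections supported on $\{0\}\times Y$. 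Condition (3) then follows from (2) by a Hartogs-type argument: an element $a$ of the intersection extends, by the definition of $F(X_x) = \colim_{V\ni x} F(V)$, to a Zariski neighborhood of each codimension-one point, and (2) ensures that these extensions agree on overlaps and glue to a section on $X\setminus Z'$ with $\codim_X Z'\geq 2$; extension across $Z'$ follows from a second Noetherian induction on $\codim Z'$, using $\bcube$-invariance and Lemma~\ref{lem:cohomology-open-P1} in a local codimension-two model to establish $H^i_{Z'}(X,F) = 0$ for $i\leq 1$.

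The principal technical obstacle is the codimension-one base case of (2). Morel's $\A^1$-invariant argument proceeds by exploiting the translation symmetry of $\A^1$, which is partially broken in our setting by the log divisor at $\infty\in\bcube$. The delicate point is therefore to extract, via the localization sequence at $\{0,\infty\}\subseteq \bcube$ and Lemma~\ref{lem:cohomology-open-P1}, enough control on the restriction $F(\bcube\times Y)\cong F(Y)\to F(\A^1\times Y)$ to conclude that any section of $F$ on $\A^1\times Y$ supported at $\{0\}\times Y$ must vanish; this is where most of the subtlety will lie, as well as the need for a presentation lemma that is compatible with both the dividing Nisnevich topology and the given log structure.
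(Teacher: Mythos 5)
Your outline matches the paper's high-level strategy — (1) is formal, (2) is the purity/injectivity statement (which the paper simply cites from \cite[Theorem 5.10]{BindaMerici} rather than reproving), and the real content is the extension across closed subsets of codimension $\geq 2$. But the proposal stops exactly where the proof begins. The entire difficulty is concentrated in the local vanishing statement (the paper's Lemma \ref{lem:key-unramified}): for $X$ local of dimension $n\geq 2$ with closed point $x$, the map $F(X)\to F(X-x)$ is surjective. You defer this to ``the localization sequence and Lemma \ref{lem:cohomology-open-P1} \dots to control sections supported on $\{0\}\times Y$'' and then explicitly concede that ``this is where most of the subtlety will lie.'' That is an acknowledgement of the gap, not an argument. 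In particular your claim that $H^i_{Z'}(X,F)=0$ for $i\leq 1$ can be established ``in a local codimension-two model'' using $\bcube$-invariance and Lemma \ref{lem:cohomology-open-P1} is precisely the assertion that needs proof, and neither of those two inputs suffices on its own: Lemma \ref{lem:cohomology-open-P1} only kills $H^1$ of dense opens of $\A^1$, a one-dimensional statement.

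What is actually needed, and what the paper supplies, is a log adaptation of Morel's two-projection trick. After reducing to the henselian local case, one first enlarges the log structure one divisor component at a time (using the Thom-space localization sequence of \cite[Theorem 7.5.4]{BPO} to see that the cokernel only grows) so as to assume $\partial X$ has $n$ components; a strict \'etale map then identifies the punctured local scheme, up to a strict Nisnevich square, with $\bcube^{\times n}_{k(x)}-\infty^n$. The key computation is that the reduced part $F(\bcube^{\times n}_{k(x)}-\infty^n, i_{0^n})$ vanishes: this follows from the commutative (non-covering) square \eqref{eq:cover-P1-minus-infty} comparing the two copies of $\A^1\times\bcube^{\times n-1}$ inside $\bcube^{\times n}-\infty^n$, which after applying $\bcube$-invariance forces the injective restriction $\ell_1$ to factor as $F((id\times i_0)^{0})\circ F(p_2^{0})\circ\ell_2=0$. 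None of this machinery — the induction on the number of log components, the comparison square, or the interplay of the two projections $p_1,p_2\colon\A^2\to\A^1$ with the base-point splittings — appears in your proposal, so the central step of the theorem remains unproved.
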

	
	\begin{remark}\label{rmk:unramified}
		As observed in \cite[Remark 1.4]{Morel}, if $F$ satisfies $(1)$ and $(2)$, then $F$ satisfies $(3)$ if and only if it satisfies
		\begin{enumerate}
			\item[$(3')$] For all $Z\subseteq \ul{X}$ of codimension $\geq 2$, the map $F(X)\to F(\ul{X}-Z,\partial X_{|\ul{X}-Z})$ is a bijection.
		\end{enumerate}
		Notice that every $F\in \logCI(k,A)$ satisfies $(1)$, being a sheaf, and $(2)$ by \cite[Theorem 5.10]{BindaMerici}. Moreover, by \cite[Corollary 2.4]{shujilog}, for all $G\in \RSC_{\Nis}$, $\Log(G)$ satisfies $(3')$, hence it is unramified. 
	\end{remark}
	
	The key part of the proof of Theorem \ref{thm:unramified} is the following:
	\begin{lemma}\label{lem:key-unramified}
		Let $X\in \widetilde{\SmlSm(k)}$ such that $\ul{X}$ is a local scheme of dimension $n\geq 2$ with closed point $x$. Then for all $F\in \logCI(k,A)$ we have an isomorphism\[
		F(X)\cong F(X-x)\] and an injective map:\[H^1_{\dNis}(X,F)\hookrightarrow H^1_{\dNis}(X-x,F).
		\]
		\begin{proof}
  For $f\colon Y\to Y'$ in $\widetilde{\SmlSm(k)}$, let $\cofib(R\Gamma(f,F)):= \cofib(R\Gamma_{\dNis}(Y',F)\to R\Gamma_{\dNis}(Y,F))$. 
			Let $(X^h,x^h)$ be the henselization of $(X,x)$ and let $j\colon (X-x)\hookrightarrow X$ be the strict open immersion. Then we have a strict Nisnevich square:\[
			\begin{tikzcd}
				(X^h-x^h)\ar[r,"j^h"]\ar[d]&X^h\ar[d]\\
				(X-x)\ar[r,"j"] &X,
			\end{tikzcd}
			\] 
			in particular $\cofib(R\Gamma(j,F))\simeq \cofib(R\Gamma(j^h,F))$. By \cite[Theorem 3.5.7]{BPO-SH} (see also \cite[Theorem 7.7.4]{BPO}), we have that for all $Y\in \widetilde{\SmlSm}$ and $F\in \logCI(k,A)$ (see \cite[Theorem 5.1.8]{BPO})\[
            H^n_{\dNis}(Y,F) = \colim_{Y'\in Y^{\rm Sm}_{\rm div}} H^n_{\sNis}(Y',F),
            \] 
            and since $F$ is strictly $(\P^n,\P^{n-1})$-invatiant (as it is $(\bcube,\dNis)$-local) by \cite[Proposition 7.3.1]{BPO}), the latter is just $H^n_{\sNis}(Y,F)$ by \cite[Theorem 7.8.3]{BPO} (see also \cite[(7.8.1)]{BPO}). 
            Therefore, since $X^h$ is henselian, we have that $H^1_{\dNis}(X^h,F) \cong H^1_{\sNis}(X^h,F) = 0$, so we have a commutative diagram\[
			\begin{tikzcd}
				0\ar[r]&F(X)\ar[r,"F(j)"]\ar[d]&F(X-x)\ar[r]\ar[d]&\pi_0\cofib(R\Gamma(j,F))\ar[d,"\simeq"]\ar[r]&H^1_{\dNis}(X,F)\ar[d]\\
				0\ar[r]&F(X^h)\ar[r,"F(j^h)"]&F(X^h-x)\ar[r]&\pi_0\cofib(R\Gamma(j^h,F))\ar[r]&0.\\
			\end{tikzcd}
			\]
			In particular, it is enough to show that $\pi_0\cofib(R\Gamma(j^h,F))=\coker(F(j^h))=0$, hence we can suppose that $X$ is itself henselian. 
   By the usual standard argument (see \cite[Remark 4.6.1 (b)]{BeilinsonVologodsky}) we can assume that $k$ is infinite: indeed we can assume that there is a prime $\ell$ invertible in $A$ and take $k'/k$ a maximal pro-$\ell$-extension: then since $F$ has transfers we have a commutative diagram\[
   \begin{tikzcd}
       F(X_{k'})\ar[r]\ar[d]&F(X)\ar[d]\\
       F((X-x)_{k'})\ar[r]&F(X-x)
   \end{tikzcd}
   \]
   where the horizontal arrows are given by colimits over all finite subextensions $k'/L/k$ of the transpose of the finite maps $X_{L}\to X$ and $(X-x)_L\to (X-x)$.
   Let $r$ be the number of components of $\partial X$. As observed in \cite[Lemma 4.3]{BindaMerici} and \cite[Lemma 6.1]{SaitoPurity}, there is a regular sequence $(t_1\ldots t_n)$ in $\cO_{\ul{X}}(\ul{X})$  and an isomorphism $\ul{X}\cong \Spec(k(x)\{t_1,\ldots t_n\})$ such that $|\partial X|$ has local equation $t_1\ldots t_r$. If $r<n$, let $X'=(\ul{X},\partial X')$ where $|\partial X'|$ has local equation $t_1\ldots t_{r+1}$, and let $j'\colon X'\to X'-x$ be the open immersion. Let $D_{r+1}$ be the divisor with local equation $t_{r+1}=0$ on $\ul{X}$. Then as observed in \cite[(4.5.8)]{BindaMerici}, the localization sequence of \cite[Theorem 7.5.4]{BPO} induces a commutative diagram where the rows are exact and the columns are injective maps (in fact, the key passage of \cite[Claim 4.5]{BindaMerici} is the proof that the vertical map on the right is injective):\[
			\begin{tikzcd}
				0\ar[r]&F(X)\ar[r]\ar[d,hook,"F(j)"]&F(X')\ar[r]\ar[d,hook,"F(j')"]&\pi_{-1}\Map(MTh(N_{D_{r+1}/X}), F)\ar[r]\ar[d,hook]&0\\
				0\ar[r]&F(X-x)\ar[r]&F(X'-x)\ar[r]&\pi_{-1}\Map(MTh(N_{D_{r+1}-x/X-x}), F),
			\end{tikzcd}
			\] 
        where $\Map$ indicate the mapping spectrum, so we conclude then that $\coker F(j)\subseteq \coker F(j')$, so we can suppose that $r=n$. In this case, there is a strict pro-\'etale map \[
			X\to \bcube^{\times n}_{k(x)}
			\]
			which induces the following strict Nisnevich square in $\widetilde{\SmlSm(k)}$:
   \begin{equation}\label{eq:cd-reduce-henselian}
        \begin{tikzcd}
				(X-x)\ar[r,"j"]\ar[d]&X\ar[d]\\
				(\bcube^{\times n}_{k(x)}-\infty^{n})\ar[r,"j'"]&\bcube^{\times n}_{k(x)},
			\end{tikzcd}
	   \end{equation}
			where $\infty^{n}$ is the point $(\infty,\ldots,\infty)_{k(x)}$ of $\bcube^{\times n}_{k(x)}$, so we have $\cofib(R\Gamma(j,F))\simeq \cofib(R\Gamma(j',F))$. Since $F\in \logCI(k,A)$, we have that $H^1_{\dNis}(\bcube^{\times n}_{k(x)},F) = 0$, so we have following commutative diagram where the rows are short exact sequences:\[
			\begin{tikzcd}
				0\ar[r]&F(\bcube^{\times n}_{k(x)})\ar[r,"F(j')"]\ar[d]&F(\bcube^{\times n}_{k(x)}-\infty^n)\ar[r]\ar[d]&\pi_0\cofib(R\Gamma(j',F))\ar[r]\ar[d,"\cong"]&0\\
				0\ar[r]&F(X)\ar[r,"F(j)"]&F(X-x)\ar[r]&\pi_0\cofib(R\Gamma(j,F))\ar[r]&0,
			\end{tikzcd}
			\]
			hence it is enough to show that $F(j')$ is an isomorphism. Let $L\in \widetilde{\SmlSm(k)}$ be a field. For $Y\in \SmlSm(L)$ and $y\in Y^{\circ}$ an $L$-rational point, we let $\mathrm{str}\colon Y\to (\Spec(L),\triv)$ be the structural morphism and $i_y\colon (\Spec(L),\triv)\to Y$ be the closed immersion of $y$, and  $F(Y,i_y)$ be the complement of the induced split:
			\begin{equation}\label{eq:deg-zero}
				\begin{tikzcd}F(\Spec(L),\triv)\ar[r,"F({\rm str})"']&F(Y)\ar[l,bend right = 20, "F(i)"'],\end{tikzcd}
			\end{equation}
			In our case, we will fix the inclusion $i_{0^n}$ of $0^n:=(0,\ldots 0)_{k(x)}$ in $(\bcube^{\times n}_{k(x)}-\infty^n)$ and its open neighborhoods. To conclude, it is enough to show that $F((\bcube^{\times n}_{k(x)}-\infty^n),i_{0^n}) = 0$: this would imply that the map $F(j')$ is surjective. Consider the open subschemes $\A^1\times(\P^1)^{\times n-1}$ and $(\P^1)\times \A^1\times(\P^1)^{\times n-2}$ of $(\P^1)^{\times n}-\infty^{n}$: this induces a commutative square (not a cover!)
			\begin{equation}\label{eq:cover-P1-minus-infty}
				\begin{tikzcd}
					\A^2\times \bcube^{\times n-2}_{k(x)}\ar[r,"\alpha_1"]\ar[d,"\alpha_2"]&\A^1\times \bcube^{\times n-1}_{k(x)}\ar[d]\\
					\bcube\times \A^1\times \bcube^{\times n-2}_{k(x)}\ar[r]&\bcube^{\times n}_{k(x)}-\infty^n
				\end{tikzcd}
			\end{equation}
			Let $p_i\colon \A^2\to \A^1$ denote the $i$-th projection. By $\bcube$-invariance, we have for $i=0,1$:\[
			\begin{tikzcd}
				h_0^{\bcube}(\A^2\times \bcube^{\times n-2}_{k(x)})\ar[r,"h_0^{\bcube}\alpha_{i+1}"]\ar[d,"\cong"]&h_0^{\bcube}(\bcube^{\times i}\times \A^1\times \bcube^{\times n-1-i}_{k(x)})\ar[d,"\cong"]\\
				h_0^{\bcube}(\A^2\times k(x))\ar[r,"h_0^{\bcube}p_{i+1}"]&h_0^{\bcube}(\A^1\times k(x)).
			\end{tikzcd}
			\]
			Let $i_0\colon \Spec(k(x))\to \A^1_{k(x)}$ and $i_{(0,0)}\colon \Spec(k(x))\to \A^2_{k(x)}$ be the inclusions of the points $0$ and $(0,0)$ respectively: the maps $i_{0^n}$ clearly factor through $i_0$ and $i_{(0,0)}$ followed by the inclusions of $\A^1\times 0^{n-1}$, $0\times \A^1\times 0^{n-2}$ and $\A^2\times 0^{n-2}$, hence \eqref{eq:cover-P1-minus-infty} induces the following commutative square:
			\begin{equation}\label{eq:projections}
				\begin{tikzcd}
					F((\bcube^{\times r}_{k(x)}-\infty^n),i_{0^n})\ar[r,hook,"\ell_2"]\ar[d,hook,"\ell_1"]& F(\A^1_{k(x)},i_{0})\ar[d,"F(p_2^{0})"]\\
					F(\A^1_{k(x)},i_{0})\ar[r,"F(p_1^{0})"]& F(\A^2_{k(x)},i_{(0,0)})
				\end{tikzcd}
			\end{equation}
			where the maps $\ell_i$ are injective by purity of $F$ (\cite[Theorem 5.10]{BindaMerici}). We will conclude by showing that the map $\ell_1$ is the zero map. We have the following commutative diagrams:\[
			\begin{tikzcd}
				\Spec(k(x))\ar[d,equal]\ar[r,"i_0"]&\A^1_{k(x)}\ar[d,"id\times i_0"']\ar[l,bend left = 30,"{\rm str}"']\ar[dd,bend left = 50,equal]
				&&\Spec(k(x))\ar[d,equal]\ar[r,"i_0"]&\A^1_{k(x)}\ar[d,"id\times i_0"']\ar[l,bend left = 30,"{\rm str}"']\ar[dd,bend left = 50,"i_0\circ\textrm{ str}"]\\
				\Spec(k(x))\ar[r,"i_{(0,0)}"]\ar[d,equal]
				&\A^2_{k(x)}\ar[d,"p_1"']\ar[l,bend left = 30,"{\rm str}"']&&\Spec(k(x))\ar[r,"i_{(0,0)}"]\ar[d,equal]& \A^2_{k(x)}\ar[l,bend left = 30,"{\rm str}"']\ar[d,"p_2"']\\
				\Spec(k(x))\ar[r,"i_0"] &\A^1_{k(x)}\ar[l,bend left = 30,"{\rm str}"']&&\Spec(k(x))\ar[r,"i_0"]&\A^1_{k(x)}\ar[l,bend left = 30,"{\rm str}"']
			\end{tikzcd}
			\]
			which imply that we have a map $F((id\times i_0)^{0})\colon F(\A^2_{k(x)},i_{(0,0)})\to F(\A^1_{k(x)},i_{0})$ such that \[F((id\times i_0)^{0})\circ F(p_1^{0}) = id_{F(\A^1_{k(x)},i_{0})}\quad\textrm{and}\quad F((id\times i_0)^{0})\circ F(p_2^{0}) = 0,\]
			so by the commutativity of \eqref{eq:projections} we conclude that\[
			\ell_1 = F((id\times i_0)^{0})\circ F(p_1^{0})\circ \ell_1 = F((id\times i_0)^{0})\circ F(p_2^{0}) \circ \ell_2 = 0
			\]
		\end{proof}
	\end{lemma}
	
	\begin{proof}[Proof of Theorem \ref{thm:unramified}]
		We need to show that $F$ satisfies condition $(3')$ in Remark \ref{rmk:unramified}: let $X=(\underline{X},\partial X)\in \SmlSm(k)$ and $Z\subseteq \underline{X}$ of codimension $\geq 2$ and let $\eta_Z$ be its generic point. Recall that we have a strict Zariski square of log schemes\[
		\begin{tikzcd}
			X_{\eta_Z}-{\eta_Z}\ar[r]\ar[d]&X_{\eta_Z}\ar[d]\\
			X-Z\ar[r]&X
		\end{tikzcd}
		\]
		which induces by Lemma \ref{lem:key-unramified} a commutative diagram\[
		\begin{tikzcd}
			F(X) \ar[r]\ar[d]&F(X_{\eta_Z})\ar[d,"\simeq"]\\
			F(X-Z)\ar[r]&F(X_{\eta_Z}-{\eta_Z})
		\end{tikzcd}
		\]
		In particular, for $\alpha \in F(X-Z)$ there is $\beta \in F(X_{\eta_Z})$ such that $\beta\mapsto \alpha$ in $F(X_{\eta_Z}-\eta_Z)$, in particular there exists $U\subseteq X$ open with $U\cap Z$ dense in $Z$ and $\beta'\in F(U)$ such that $\beta\mapsto \alpha$ in $F(U-Z)$. Now let $Z_1$ be the complement of $U\cup (X-Z)$ in $X$: as $U\cap Z$ is dense in $Z$, we get that $\codim(Z_1)>\codim(Z)$. As $F$ is a sZar sheaf, this implies that there exists $\alpha_1\in F(X-Z_1)$ such that $\alpha_1\mapsto \alpha$ in $F(X-Z)$. 
		
		By repeating the same argument, we find a chain of strict closed subsets\[
		Z_r\subset \ldots \subset Z\subseteq \ul{X}
		\] 
		and elements $\alpha_i\in F(X-Z_i)$ such that $\alpha_i\mapsto \alpha$ in $F(X-Z)$. As $Z$ has finite Krull dimension, $Z_i=0$ for $i\gg 0$, which implies that $F(X)\to F(X-Z)$ is surjective. This together with \cite[Theorem 5.10]{BindaMerici} concludes the proof.
	\end{proof}
	
	We draw some conclusions from Theorem \ref{thm:unramified}. First we show the following interesting Gersten-like property:
	
	\begin{thm}\label{thm:Gersten}
		Let $X\in \SmlSm(k)$ and let $F\in \logCI(k,A)$. Let $F_{-1}:= \uExt^1_{\Sh_{\dNis}}(\Z(\P^1),F)$. Then there is a left exact sequence\[
		0\to F(X)\to F(\eta_X)\to \Bigl(\prod_{x\in (X^{\circ})^{(1)}} F(\A^1_{k(x)})/F(k(x))\oplus F_{-1}(k(x))\Bigr)\times \Bigl(\prod_{x\in |\partial X|^{(0)}} F(\A^1_{k(x)})/F(k(x))\Bigr).
		\]
		\begin{proof}
			Since $F$ is unramified by Theorem \ref{thm:unramified}, it is enough to show it on $X_x$ for $x\in \ul{X}^{(1)}$.
			Let $X^h_x\to X_x$ be the henselization and let $\eta_x^h$ be the generic point of $\ul{X_x^h}$: this gives a strict Nisnevich square
			\[
			\begin{tikzcd}
				(\eta_x^h,\triv)\ar[r,"j^h"]\ar[d] &X_x^h\ar[d]\\
				(\eta_X,\triv)\ar[r,"j"] &X_x
			\end{tikzcd}
			\]
			which gives the commutative diagram (see \eqref{eq:cd-reduce-henselian}) :
			\begin{equation}\label{eq:zar-to-Nis}
				\begin{tikzcd}
					0\ar[r]&F(X_x)\ar[r]\ar[d]&\omega_\sharp F(\eta_X)\ar[r]\ar[d]&\pi_0\cofib(R\Gamma(j,F))\ar[d,equal]\\
					0\ar[r]&F(X_x^h)\ar[r]&\omega_\sharp F(\eta_x^h)\ar[r]&\coker(F(j^h))\ar[r]&0
				\end{tikzcd}
			\end{equation}
			In particular, it is enough to show that for all $X_x^h\in \SmlSm(k)$ henselian DVR with log structure supported on the closed point and with generic point $\eta_x^h$, we have:
			\begin{equation}\label{eq:gersten-non-log}
				F(\eta_x^h)/F(\ul{X_x^h},\triv)\cong F(\A^1_{k(x)})/F(k(x))\oplus F_{-1}(k(x))
			\end{equation}
			and
			\begin{equation}\label{eq:gersten-log}
				F(\eta_x^h)/F(X_x^h)\cong F(\A^1_{k(x)})/F(k(x)).
			\end{equation}
			As in the proof of Lemma \ref{lem:key-unramified} (see also \cite[Theorem 4.1, (ii)]{BindaMerici}), there is a formally \'etale map $\ul{X_x^h}\to \P^1_{k(x)}$ inducing strict Nisnevich squares:
			\begin{equation}\label{eq:Gabber}
				\begin{tikzcd}
					(\eta_x^h,\triv) \ar[r]\ar[d,"{(e^\circ)}"] &(\ul{X_x^h},\triv) \ar[d,"\ul{e}"]&&	(\eta_x^h,\triv) \ar[r]\ar[d,"{(e^\circ)}"] &X_x^h\ar[d,"e"]\\
					(\A^1_{k(x)},\triv)\ar[r] &(\P^1_{k(x)},\triv)&&(\A^1_{k(x)},\triv)\ar[r] &\bcube_{k(x)}.
				\end{tikzcd}
			\end{equation}
			Since $F\in \logCI(k,A)$, we have that $F(\bcube_{k(x)})\cong F(k(x))$ and $H^1_{\dNis}(\bcube_{k(x)},F) = 0$. The square
	\begin{equation}\label{eq:MV-P1}
		\begin{tikzcd}
			(\P^1,0+\infty)\ar[r]\ar[d]&(\P^1,0)\ar[d]\\
			(\P^1,\infty)\ar[r]&(\P^1,\triv),
		\end{tikzcd}
	\end{equation}
	induces a cartesian square in $\logDM(k,\Z)$ (see \cite[Proposition 3.1.6]{BPO-SH}), therefore $F(\P^1_{k(x)})\simeq F(k(x))$.
Finally, by Lemma \ref{lem:cohomology-open-P1} we have $H^1_{\dNis}(\A^1_{k(x)},F) =0$. This implies that we have short exact sequences
			\begin{equation}\label{eq:sequence-coh-P1}
				\begin{tikzcd}
					0\ar[r]&F(\A^1_{k(x)})/F(k(x))\ar[r]\ar[d,equal] &F(\eta_x^h)/F(\ul{X_x^h})\ar[r]\ar[d] &H^1_{\Nis}(\P^1_{k(x)},\omega_\sharp F)\ar[d]\ar[r]&0\\
					0\ar[r]&F(\A^1_{k(x)})/F(k(x))\ar[r,"\cong"] &F(\eta_x^h)/F(X_x^h)\ar[r]&0.
				\end{tikzcd}
			\end{equation}
			This implies that the above sequence splits and concludes the proof.
		\end{proof}
	\end{thm}
	We recall that there is a functor $\omega_\sharp \colon \PShlog(k,A)\to \PSh(k,A)$ defined as $\omega_\sharp F(X) = F(X,\triv)$.
	\begin{defn}\label{def:residues}
		A \emph{presheaf with residues} is the datum of $F\in \PSh(k,A)$ and for all $X\in \widetilde{\Sm(k)}$ the spectrum of an henselian DVR with generic point $\eta_X$ and residue field $k(x)$, a map $F(\eta_X)\to F(\A^1_{k(x)})/F(k(x))$. 
	\end{defn}
	
	\begin{remark}\label{rmk:residues}
		Let $F\in \logCI$, then $\omega_{\sharp}F$ is naturally a presheaf with residues by \eqref{eq:sequence-coh-P1}. Moreover, for all $X_x^h= (\Spec(k(x)\{t\}),t)\in \widetilde{\SmlSm(k)}$ and $\ul{X_x^h}$ the underlying scheme with trivial log structure, we get the following variant of \eqref{eq:Gabber}:\[
		\begin{tikzcd}
			(\eta_x^h,\triv) \ar[r]\ar[d,"{(e^\circ)}"] &(\ul{X_x^h},\triv) \ar[d,"\ul{e}"]&&	(\eta_x^h,\triv) \ar[r]\ar[d,"{(e^\circ)}"] &X_x^h\ar[d,"e"]\\
			(\A^1_{k(x)}-\{0\},\triv)\ar[r] &(\A^1_{k(x)},\triv)&&(\A^1_{k(x)}-\{0\},\triv)\ar[r] &(\A^1_{k(x)},0).
		\end{tikzcd}
		\]
		together with the Nisnevich squares\[
		\begin{tikzcd}
			(\A^1_{k(x)}-\{0\},\triv)\ar[r]\ar[d] &(\A^1_{k(x)},\triv)\ar[d]&&(\A^1_{k(x)}-\{0\},\triv)\ar[r]\ar[d] &(\A^1_{k(x)},0)\ar[d]\\
			(\P^1_{k(x)}-\{0\},\triv)\ar[r] &(\P^1_{k(x)},\triv)&&
			(\P^1_{k(x)}-\{0\},\triv)\ar[r] &(\P^1_{k(x)},0).
		\end{tikzcd}
		\]
		we have the a commutative diagram (we drop the ``$\triv$''):\[
		\begin{tikzcd}
			&\pi_0\cofib(R\Gamma_{\Nis}(\P^1_{k(x)},\omega_\sharp F)\to F(\A^1_{k(x)}))\ar[r]\ar[dd,"\simeq"] &\frac{ F(\A^1_{k(x)})}{F(\bcube_{k(x)})}\ar[dd,"\simeq"]\\
			\frac{F(\eta_x^h)}{F(\ul{X_x^h})}\ar[ur,"\simeq"]\ar[dr,"\simeq"]&&&	\frac{F(\eta_x^h)}{F(X_x^h)}\ar[ul,"\simeq"']\ar[dl,"\simeq"'] \\
			&\frac{F(\A^1_{k(x)}-\{0\})}{F(\A^1_{k(x)})}\ar[r]&\frac{ F(\A^1_{k(x)}-\{0\})}{F(\A^1_{k(x)},0)}
		\end{tikzcd}
		\]
		which implies by $\bcube$-invariance that one can construct residues via the map on the bottom. This has the advantage that there is no higher cohomology involved in the construction.
	\end{remark}
	The main application of the construction above is the following:
	\begin{prop}\label{prop:residues-lift}
		For $F,G\in \logCI$, any map $\phi\colon \omega_\sharp F\to \omega_\sharp G$ of presheaves with residues lifts uniquely to a map of log sheaves $\widetilde{\phi}\colon F\to G$. If $F,G\in \logCI^{\ltr}$ and $\phi$ is a map of presheaves with transfers, then $\widetilde{\phi}$ is a map of log presheaves with transfers.
		\begin{proof}
			The uniqueness is \cite[Proposition 0.1]{BindaMericierratum}. Let $X\in \SmlSm$ and let $j\colon (X^\circ,\triv)\to X$ be the inclusion of the trivial locus. First we observe that it is enough to show that for all $X\in \SmlSm(k)$ irreducible there is a map $\phi_{X}\colon F(X)\to G(X)$ that makes the following diagram commute:
			\begin{equation}\label{eq:conj-reduction-1}
				\begin{tikzcd}
					F(X)\ar[r,hook,"F(j)"]\ar[d,dotted,"\phi_{X}"] &\omega_\sharp F(X^{\circ})\ar[d,"\phi_{X^{\circ}}"]\\
					G(X)\ar[r,hook,"G(j)"] &\omega_\sharp G(X^{\circ}).
				\end{tikzcd}
			\end{equation}
			Let $\alpha\colon Y\to X$ a map of log schemes and assume that $\phi_X$ and $\phi_Y$ exist. Consider $\alpha^{\circ}\colon Y^\circ\to X^\circ$ the restriction to the trivial loci, then since $\omega_\sharp \phi$ is a map of presheaves we have a commutative diagram:
			\begin{equation}\label{eq:conj-reduction-2}
				\begin{tikzcd}
					\omega_\sharp F(X^{\circ})\ar[r,"\omega_\sharp F(\alpha^{\circ})"]\ar[d,"\phi_{X^{\circ}}"]&\omega_\sharp F(Y^\circ)\ar[d,"\phi_{Y^{\circ}}"]\\
					\omega_\sharp G(X^{\circ})\ar[r,"\omega_\sharp G(\alpha^{\circ})"]&\omega_\sharp G(Y^{\circ}),
				\end{tikzcd}
			\end{equation}
			Since the map $G(j)$ above is injective by \cite[Theorem 5.10]{BindaMerici}, it is enough to show that $G(j)\phi_YF(\alpha) = G(j)G(\alpha)\phi_X$, which follows from the commutativity of \eqref{eq:conj-reduction-1} for $X$ and $Y$ and of \eqref{eq:conj-reduction-2}.
			
			We now show the existence of the map $\phi_{X}$ in \eqref{eq:conj-reduction-1}. Let $\eta_X$ be the generic point of $\ul{X}$: since $F$ and $G$ are unramified by Theorem \ref{thm:unramified}, by the property (3) in Definition \ref{def:unramified}, it is enough to check that for all $x\in X$ of codimension $1$ we have a map $\phi_{X_x}$ that fits in the following commutative diagram:\[
			\begin{tikzcd}
				F(X_x)\ar[r,hook]\ar[d,dotted,"{\phi_{X_x}}"] &\omega_\sharp F(\eta_X)\ar[d,"{\phi_{\eta_X}}"]\\
				G(X_x)\ar[r,hook] &\omega_\sharp G(\eta_X)
			\end{tikzcd}
			\]
			If $x\not \in |\partial X|$, $X_x$ has trivial log structure so $\phi_{X_x}$ exists, so assume $x\in |\partial X|$. As in \eqref{eq:zar-to-Nis} it is enough to construct a map $F(X_x^h)\dashrightarrow G(X_x^h)$. Since $\phi$ preserves residues, we have a commutative diagram:\[
			\begin{tikzcd}
				\omega_\sharp F(\eta_x^h)\ar[rr,"\Res(F(\ul{X_x^h}))"]\ar[d,"\phi_{\eta_x^h}"]&&\omega_\sharp F(\A^1_{k(x)})/\omega_\sharp F(k(x))\ar[d,"\Res(\phi)"]\\
				\omega_\sharp G(\eta_x^h)\ar[rr,"\Res(G(\ul{X_x^h}))"]&&\omega_\sharp G(\A^1_{k(x)})/\omega_\sharp G(k(x)).
			\end{tikzcd}
			\]
			On the other hand, by Theorem \ref{thm:Gersten} we have that $\ker(\Res (F(\ul{X_x^h})))=F({X_x^h})$ and  same for $G$, so $\phi_{X_x^h}$ exists as the map induced on the kernels. If $\phi$ is a map of presheaves with transfers, then the commutativity of \eqref{eq:conj-reduction-1} and \eqref{eq:conj-reduction-2} for $\alpha$ a log finite correspondence implies that $\phi$ extends to a map with transfers.
		\end{proof}	
	\end{prop}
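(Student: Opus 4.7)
The plan is to construct, for each $X \in \SmlSm(k)$ irreducible, a morphism $\widetilde{\phi}_X \colon F(X) \to G(X)$ that fits in a commutative square with $\phi_{X^\circ} \colon \omega_\sharp F(X^\circ) \to \omega_\sharp G(X^\circ)$, and then check functoriality in $X$. The starting observation is that by \cite[Theorem 5.10]{BindaMerici} the canonical map $G(X) \hookrightarrow \omega_\sharp G(X^\circ)$ is injective, so any lift making the square commute is unique. For a morphism $\alpha \colon Y \to X$ in $\SmlSm(k)$, the naturality of $\widetilde{\phi}$ along $\alpha$ then reduces, after post-composing with $G(j_Y)\colon G(Y) \hookrightarrow \omega_\sharp G(Y^\circ)$, to the naturality of $\omega_\sharp \phi$ along $\alpha^\circ$, which holds by assumption.

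To construct $\widetilde{\phi}_X$, I would use that $F$ and $G$ are unramified by Theorem \ref{thm:unramified}, and in particular satisfy condition (3) of Definition \ref{def:unramified}: this lets me test the existence of $\widetilde{\phi}_X$ on the localizations $X_x$ at all codimension one points $x \in \ul{X}$. If $x \notin |\partial X|$ then $X_x$ carries the trivial log structure and the lift is just $\phi_{X_x}$. Otherwise, the strict Nisnevich square relating $X_x$ with its henselization $X_x^h$ (together with the vanishing of $H^1_{\dNis}(X_x^h, -)$ used in Lemma \ref{lem:key-unramified}) allows a further reduction to the case where $X_x^h$ is a henselian DVR with log structure supported at the closed point.

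In this henselian case, the Gersten-like sequence of Theorem \ref{thm:Gersten}, read through the identification \eqref{eq:gersten-log}, presents $F(X_x^h)$ as the kernel of the residue map $\omega_\sharp F(\eta_x^h) \to \omega_\sharp F(\A^1_{k(x)})/\omega_\sharp F(k(x))$, and similarly for $G$. Since $\phi$ is assumed to be a map of presheaves with residues, it intertwines these residues, so it induces a morphism on kernels, which is the desired $\widetilde{\phi}_{X_x^h}$. Finally, when $F, G \in \logCI^{\ltr}$ and $\phi$ respects log transfers on the trivial locus, the injectivity of $G(Y) \hookrightarrow \omega_\sharp G(Y^\circ)$ together with the commutativity of the squares already established shows that $\widetilde{\phi}$ respects log transfers along any elementary log correspondence.

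The step I expect to be most delicate is ensuring that the residue map appearing in the Gersten-like sequence of Theorem \ref{thm:Gersten} is \emph{the same} residue structure with respect to which $\phi$ is required to be compatible. This hinges on Remark \ref{rmk:residues}, where the residue is reinterpreted via the squares \eqref{eq:Gabber} and the splittings in \eqref{eq:sequence-coh-P1}, so as to avoid higher cohomology and thereby match Definition \ref{def:residues}; one needs to verify that both descriptions land in the same quotient $\omega_\sharp F(\A^1_{k(x)})/\omega_\sharp F(k(x))$ naturally in $F$.
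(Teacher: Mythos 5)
Your proposal is correct and follows essentially the same route as the paper: reduction to irreducible $X$ via injectivity of $G(X)\hookrightarrow \omega_\sharp G(X^\circ)$, reduction to codimension-one points via unramifiedness (Theorem \ref{thm:unramified}(3)), passage to the henselization, and identification of $F(X_x^h)$ as the kernel of the residue map via Theorem \ref{thm:Gersten}, so that compatibility with residues yields the induced map on kernels. The subtlety you flag about matching the two descriptions of the residue is exactly what Remark \ref{rmk:residues} addresses, as in the paper's argument.
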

	
	We are now ready to prove the main theorem of this section. First let us fix some notation: let $k(x)$ be as before and let $U\subseteq \A^1_{k(x)}$ be an open neighborhood of $\{0\}$. Then for all $F\in \logCI$, by \cite[Theorem 7.5.4]{BPO} there is a fiber sequence
	\begin{equation}\label{eq:Thom}
		\begin{aligned}
			\Map_{\logDA}(MTh(N_{\{0\}}),F)\to \Map_{\logDA}(M(U,\triv),F)\to  \Map_{\logDA}(M(U,0),F)
		\end{aligned}
	\end{equation}
	compatible with open embeddings $V\hookrightarrow U$. Recall now that 
	\begin{equation}\label{eq:Thom-P1}
		\Map_{\logDA}(MTh(N_{\{0\}}),F) \cong R\Gamma_{\dNis}(((\P^1_{k(x)},\triv),i_{0}),F) = H^1_{\Nis}(\P^1_{k(x)},\omega_{\sharp} F)[-1].
	\end{equation}
	Let $i\colon \{0\}\hookrightarrow\A^1_{k(x)}$: combining \eqref{eq:Thom} and \eqref{eq:Thom-P1} gives a Gysin map in $\cD(\Sh((\A^1_{k(x)})_{\Zar}))$\[
	\mathrm{Gys}_{0}^F\colon i_*H^1({\P^1_{k(x)}},\omega_\sharp F)\to F_{\A^1_{k(x)}}[1],
	\] 
	where $H^1(\P^1_{k(x)},\omega_{\sharp}F)$ on the left hand side is seen as a (constant) sheaf on the small site of $\{0\}$. Via the isomorphism \begin{equation}\begin{aligned}
			&\pi_0\Map_{\cD(\Sh((\A^1_{k(x)})_{\Zar}))}(i_*H^1({\P^1_{k(x)}},\omega_\sharp F), F_{\A^1_{k(x)}}[1])\\
			&\cong \Ext^1_{\Sh((\A^1_{k(x)})_{\Zar})}(i_*H^1({\P^1_{k(x)}},\omega_\sharp F), F_{\A^1_{k(x)}}),\end{aligned}
	\end{equation} the map $\mathrm{Gys}_{0}^F$ above corresponds to the extension:
	\begin{equation}\label{eq:Gysin-es}
		0\to \omega_\sharp F_{\A^1_{k(x)}}\to F_{(\A^1_{k(x)},0)}\to i_*H^1(\P^1_{k(x)},\omega_\sharp F)\to 0
	\end{equation}
	\begin{thm}\label{thm:Gysin}
		Let $F,G\in \logCI$, and let $\phi\colon \omega_\sharp F\to \omega_\sharp G$ be a map. If the Gysin maps induce a commutative diagram in $\cD(\Sh((\A^1_{k(x)})_{\Zar}))$\[
		\begin{tikzcd}
			i_*H^1({\P^1_{k(x)}},\omega_\sharp F)\ar[r,"\mathrm{Gys}_{0}^F"]\ar[d,"i_*H^1(\phi_{\P^1_{k(x)}})"'] &F_{\A^1_{k(x)}}[1]\ar[d,"\phi_{\A^1_{k(x)}}"]\\
			i_*H^1({\P^1_{k(x)}},\omega_\sharp G)\ar[r,"\mathrm{Gys}_{0}^G"] &G_{\A^1_{k(x)}}[1],
		\end{tikzcd}
		\]
		for all finitely generated field extensions $k(x)/k$, then there is a unique map $\widetilde{\phi}\colon F\to G$ that lifts $\phi$. If $F,G\in \logCI^{\ltr}$ and $\phi$ is a map of presheaves with transfers, then $\widetilde{\phi}$ is a map of log presheaves with transfers.
		\begin{proof}
			The uniqueness of $\widetilde{\phi}$ is \cite[Proposition 0.1]{BindaMericierratum}. The commutative diagram in the statement gives a morphism $\tilde\phi_{\A^1}$ that fits in the exact sequences of \eqref{eq:Gysin-es} as follows:\[
			\begin{tikzcd}
				0\ar[r] &\omega_\sharp F_{\A^1_{k(x)}}\ar[r]\ar[d,"\phi_{\A^1}"] &F_{(\A^1_{k(x)},0)}\ar[d,"{\tilde\phi_{\A^1}}"]\ar[r] &i_*H^1(\P^1_{k(x)},\omega_\sharp F)\ar[r]\ar[d] &0\\
				0\ar[r] &\omega_\sharp G_{\A^1_{k(x)}}\ar[r] &G_{(\A^1_{k(x)},0)}\ar[r] &i_*H^1(\P^1_{k(x)},\omega_\sharp G)\ar[r] &0.
			\end{tikzcd}
			\]
			By construction, we have that $(\tilde\phi_{\A^1})_{|\A^1-\{0\}} = \phi_{\A^1-\{0\}}$, so we have a commutative diagram\[
			\begin{tikzcd}
		      F{(\A^1_{k(x)},0)}\ar[r]\ar[d,"\tilde\phi_{\A^1}"]&\omega_\sharp F(\A^1_{k(x)}-\{0\})\ar[d,"\phi_{\A^1-\{0\}}"]\\
				G(\A^1_{k(x)},0)\ar[r]&\omega_\sharp G(\A^1_{k(x)}-\{0\}),
			\end{tikzcd}
			\]
			As observed in Remark \ref{rmk:residues}, this implies that $\phi$ is a map of presheaves with residues, which by Proposition \ref{prop:residues-lift} gives the desired lift, which is a map of presheaves with log transfers if $\phi$ has log transfers by Proposition \ref{prop:residues-lift}.
		\end{proof}
	\end{thm}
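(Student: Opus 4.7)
The plan is to reduce the statement to Proposition \ref{prop:residues-lift}, which automatically promotes any map of presheaves with residues to a map of log sheaves (and respects the log transfer structure when present). Accordingly, the real task is to show that the Gysin compatibility hypothesis forces $\phi$ to respect residues in the sense of Definition \ref{def:residues} and Remark \ref{rmk:residues}.

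First I would convert the commutative square of Gysin maps in $\cD(\Sh((\A^1_{k(x)})_{\Zar}))$ into a morphism of the associated short exact sequences \eqref{eq:Gysin-es}. Via the isomorphism between $\pi_0\Map(i_*H^1(\P^1_{k(x)},\omega_\sharp F), F_{\A^1_{k(x)}}[1])$ and $\Ext^1(i_*H^1(\P^1_{k(x)},\omega_\sharp F), F_{\A^1_{k(x)}})$, the hypothesis translates to the equality of the pushforward of $[F]$ along $\phi_{\A^1}$ and the pullback of $[G]$ along $i_*H^1(\phi_{\P^1})$. The standard Yoneda dictionary then yields a middle morphism $\tilde\phi_{\A^1}\colon F_{(\A^1_{k(x)},0)}\to G_{(\A^1_{k(x)},0)}$ fitting in a commutative ladder of short exact sequences. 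Restricting this ladder along the strict open $\A^1_{k(x)}-\{0\}\hookrightarrow \A^1_{k(x)}$, both cokernels $i_*H^1(\P^1,\omega_\sharp F)$ and $i_*H^1(\P^1,\omega_\sharp G)$ vanish, so $\tilde\phi_{\A^1}$ must reduce to $\phi$ on sections over $\A^1_{k(x)}-\{0\}$. Combining this with the description of residues in Remark \ref{rmk:residues} as the quotient map $F(\A^1_{k(x)}-\{0\})/F(\A^1_{k(x)},0)$ shows that $\phi$ commutes with residues, and Proposition \ref{prop:residues-lift} then produces the desired lift $\tilde\phi\colon F\to G$.

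In the transfers case, the same argument applies: the Gysin sequence \eqref{eq:Gysin-es} sits in sheaves with log transfers whenever $F, G\in \logCI^{\ltr}$ and $\phi$ is transfer-compatible, so the middle-map construction takes place in the appropriate abelian category, and the final invocation of Proposition \ref{prop:residues-lift} yields a transfer-preserving lift. The main obstacle I foresee is the first step: extracting a genuine middle morphism $\tilde\phi_{\A^1}$ from a commutative square of extension classes in a derived category is classical but slightly delicate, and one must make sure that the chosen middle map is compatible enough with $\phi$ to make the residue argument run. Fortunately uniqueness of the lift is not required: any middle map produced from equality of extension classes will agree with $\phi$ on $\A^1_{k(x)}-\{0\}$ up to maps factoring through the cokernels, which vanish on that open, and this is all that is needed to invoke Proposition \ref{prop:residues-lift}.
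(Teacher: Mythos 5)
Your proposal is correct and follows essentially the same route as the paper's proof: extract a middle map $\tilde\phi_{\A^1}$ from the morphism of extension classes given by \eqref{eq:Gysin-es}, observe that it agrees with $\phi$ over $\A^1_{k(x)}-\{0\}$ because the cokernels are supported at $0$, deduce compatibility with residues via Remark \ref{rmk:residues}, and conclude with Proposition \ref{prop:residues-lift}. Your remark that any choice of middle map works, since the ambiguity factors through the cokernels and hence vanishes on $\A^1_{k(x)}-\{0\}$, is exactly the point that makes the argument robust.
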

	From Theorem \ref{thm:Gysin}, we will deduce the transfer structure on $W_m\Lambda^n$ in Theorem \ref{thm:de-rham-witt-rsc-log}.
	\section{Application to log de Rham--Witt cohomology}\label{sec:log-DRW}
	
	Let $S$ be a Noetherian $k$-scheme and let $W_m\Lambda^q\in \Sh_{\sZar}(S,\Z)$ of \cite{matsuue} (see \ref{ssec:logdRW}).
	The idea behind the following result has been suggested by F. Binda. We thank him for letting us include it here.
	Let $X\in \SmlSm(S)$. Consider the exact sequence on the weight filtration \eqref{eq:weight-es} for $X\times(\P^n,\P^{n-1})$. 
	We have that $\partial(X\times(\P^n,\P^{n-1}))^{(j)} = \partial X^{(j)}\times \P^n \coprod \partial X^{(j-1)}\times \P^{n-1}$, so if again we let $i_H\colon \P^{n-1}\to \P^n$ be the inclusion of a fixed hyperplane, we let $\alpha^{(j)}= \iota^{(j)}\times id\colon \partial X^{(j)}\times \P^n\to \ul{X}\times \P^n$ and $\beta^{(j)}= \iota^{(j-1)}\times i_H\colon \partial X^{(j-1)}\times \P^{n-1}\to \ul{X}\times \P^n$, we get an exact sequence:
	\begin{equation}\label{eq:weight-es-pn}
		\begin{aligned}	0&\to P_{j}W_m\Lambda^q_{X\times(\P^n,\P^{n-1})}\to P_{j+1}W_m\Lambda^q_{X\times (\P^n,\P^{n-1})}\\
			&\xrightarrow{{\rm Res}} \alpha^{(j+1)}_*W_m\Omega^{q-(j+1)}_{\partial X^{(j+1)}\times \P^n} \oplus \beta^{(j+1)}_*W_m\Omega^{q-(j+1)}_{\partial X^{(j)}\times \P^{n-1}}\to 0
		\end{aligned}
	\end{equation}
	
	\begin{lemma}\label{lem:cohomology-weight-filtration}
		For $X\in \SmlSm(S)$ and $j\geq 1$ there is an equivalence\[
		R\pi_*P_jW_m\Lambda^q_{X\times (\P^n,\P^{n-1})} \cong P_jW_m\Lambda^q_{X}[0]\oplus  \bigoplus_{i=1}^{n}\iota^{(j)}_*W_m\Omega^{q-j-i}_{\partial X^{(j)}}[-i]  \quad \mathrm{in}\quad  \cD (\Sh(\ul{X}_{\rm Zar}))
		\]
	\end{lemma}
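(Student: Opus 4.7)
The plan is to proceed by induction on $j\ge 0$. The base case $j=0$ reduces to the classical projective bundle formula of Gros for the usual de Rham--Witt sheaves $W_m\Omega^q$ on the smooth scheme $\ul X\times \P^n$, since $P_0W_m\Lambda^q = W_m\Omega^q$, $\partial X^{(0)}=\ul X$ and $\iota^{(0)}=\mathrm{id}$.

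For the inductive step, I apply the exact functor $R\pi_*$ to the short exact sequence \eqref{eq:weight-es-pn} (with the index in the $\beta$-term read as $q-j$, matching \eqref{eq:weight-es} applied to $X\times(\P^n,\P^{n-1})$), obtaining a distinguished triangle whose third term, by base change and the classical Gros formula applied to the two smooth projections $\partial X^{(j)}\times \P^n\to \partial X^{(j)}$ and $\partial X^{(j-1)}\times \P^{n-1}\to \partial X^{(j-1)}$, is
\[
\iota^{(j)}_*\bigoplus_{i=0}^{n}W_m\Omega^{q-j-i}_{\partial X^{(j)}}[-i]\;\oplus\;\iota^{(j-1)}_*\bigoplus_{i=0}^{n-1}W_m\Omega^{q-j-i}_{\partial X^{(j-1)}}[-i].
\]
The inductive hypothesis on $P_{j-1}$ gives
\[
R\pi_*P_{j-1}W_m\Lambda^q_{X\times(\P^n,\P^{n-1})}\cong P_{j-1}W_m\Lambda^q_X\oplus\bigoplus_{i=1}^n\iota^{(j-1)}_*W_m\Omega^{q-j-i+1}_{\partial X^{(j-1)}}[-i],
\]
so that in cohomological degree $i+1$ of $R\pi_*P_{j-1}$ one sees exactly the summand $\iota^{(j-1)}_*W_m\Omega^{q-j-i}_{\partial X^{(j-1)}}$ that matches the degree-$i$ part of the $\beta^{(j)}$-residue, while the degree-$i$ part of the $\alpha^{(j)}$-residue has no analogue there.

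The key technical input is the identification of the connecting map $\delta$ in the long exact sequence: on the $\iota^{(j-1)}$-summand of the $\beta^{(j)}$-residue it is, up to sign, the Gros Gysin map \eqref{eq:Gros-Gysin} associated with the closed embedding $\P^{n-1}\hookrightarrow \P^n$, interpreted over $\partial X^{(j-1)}$; on the $\iota^{(j)}$-summand of the $\alpha^{(j)}$-residue it vanishes. Granting this, the $\iota^{(j-1)}$-contributions cancel in all degrees, the $\iota^{(j)}$-contributions in residue cohomological degree $i\ge 1$ survive as $\mathcal{H}^i R\pi_*P_j$, and in degree $0$ the long exact sequence collapses to
\[
0\to P_{j-1}W_m\Lambda^q_X\to \mathcal{H}^0R\pi_*P_jW_m\Lambda^q_{X\times(\P^n,\P^{n-1})}\to \iota^{(j)}_*W_m\Omega^{q-j}_{\partial X^{(j)}}\to 0,
\]
which by \eqref{eq:weight-es} is naturally $P_jW_m\Lambda^q_X$. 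Since the higher $\mathcal{H}^i$ are then pushforwards from closed subschemes with no further extension problems, the triangle splits and yields the claimed decomposition.

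The main obstacle is verifying that $\delta$ acts as the Gros Gysin on the distinguished summands. One can reduce Zariski-locally on $\ul X$ to the case where $\partial X=\{t_1\cdots t_r=0\}$ is cut out by a regular system of parameters, choose a chart of $\P^n$ in which $\P^{n-1}=\{x_1=0\}$, and compute the residue of generators of the form $f\,\dlog(t_{i_1})\wedge\cdots\wedge\dlog(t_{i_{j-1}})\wedge\dlog(x_0/x_1)\wedge\omega$: this produces exactly the class $\omega\wedge\dlog(x_0/x_1)$ pushed forward along $\iota^{(j-1)}\times i_H$, which is the Gros Gysin class by \cite[II, Prop.~3.3.9]{Gros1985} (cf.\ the description after \eqref{eq:Gros-Gysin}). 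Alternatively, one argues functorially by comparing the weight filtration on $X\times(\P^n,\P^{n-1})$ with that on $(\P^n,\P^{n-1})$ alone via the exterior product and invoking the Gros Gysin triangle for $\P^{n-1}\subset\P^n$.
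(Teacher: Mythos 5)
Your overall strategy is the same as the paper's: induct on $j$, apply $R\pi_*$ to the weight-filtration sequence \eqref{eq:weight-es-pn} (with the index typo corrected exactly as you do), decompose the residue terms by Gros's projective bundle formula, and identify the boundary map on the $\beta^{(j)}$-summand with cup product against $\xi$ (equivalently, the Gros Gysin map of $\P^{n-1}\subset\P^n$), so that this component is a split injection onto the positive-degree summands of $R\pi_*P_{j-1}$ and the $\iota^{(j-1)}$-contributions cancel. Up to starting the induction at $j=0$ instead of $j=1$, this is the paper's argument.

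There is, however, a genuine inconsistency in your stated ``key technical input.'' You claim that the connecting map $\delta$ \emph{vanishes} on the $\iota^{(j)}$-summand of the $\alpha^{(j)}$-residue, yet two sentences later you need the degree-$0$ short exact sequence
\[
0\to P_{j-1}W_m\Lambda^q_X\to \mathcal{H}^0R\pi_*P_jW_m\Lambda^q_{X\times(\P^n,\P^{n-1})}\to \iota^{(j)}_*W_m\Omega^{q-j}_{\partial X^{(j)}}\to 0
\]
to be the (generally non-split) extension \eqref{eq:weight-es}, i.e.\ $P_jW_m\Lambda^q_X$. These cannot both hold: the component of the connecting map from $\iota^{(j)}_*W_m\Omega^{q-j}_{\partial X^{(j)}}[0]$ to $P_{j-1}W_m\Lambda^q_X[1]$ \emph{is} the extension class of \eqref{eq:weight-es}; if it vanished you would get the split sum $P_{j-1}\oplus\iota^{(j)}_*W_m\Omega^{q-j}$ in degree $0$, not $P_j$. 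The correct statement (and the mechanism in the paper) is that after splitting off the $\beta$-component $t$, the induced map from the $\alpha$-summand into $\mathrm{cofib}(t)\simeq P_{j-1}W_m\Lambda^q_X[0]$ has $i=0$ component equal to the class of \eqref{eq:weight-es} and vanishing higher components; no blanket vanishing on the $\alpha$-summand should be asserted. Relatedly, your closing justification that the higher $\mathcal{H}^i$ ``are pushforwards from closed subschemes with no further extension problems'' is not a valid reason for the splitting (pushforwards from closed subschemes have plenty of nontrivial extensions); what is actually needed, and what the paper uses, is that the remaining map $\bigoplus_{i\ge 1}\iota^{(j)}_*W_m\Omega^{q-j-i}_{\partial X^{(j)}}[-i-1]\to P_{j-1}W_m\Lambda^q_X$ is zero, so the cofiber computation via \eqref{eq:weight-es} produces the stated direct sum. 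With these two points repaired, your proof matches the paper's.
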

	\begin{proof}
		To ease the notation, let $\cP^n:=(\P^n,\P^{n-1})$. 
		
		We prove the statement by induction on $j$. If $j=0$, then the statement is just the projective bundle formula. 
		Let $j\geq 1$. Then \eqref{eq:weight-es-pn} gives the fiber sequence\[
		R\pi_*P_{j-1}W_m\Omega^q_{X\times \cP^n}\to R\pi_*P_jW_m\Lambda^q_{X\times \cP^n} \to R(\pi_H^{\partial X^{(j)}})_*W_m\Omega^{q-j}_{\P^{n-1}_{\partial X^{(j-1)}}}\oplus R(\pi{\partial X^{(j)}})_*W_m\Omega^{q-j}_{\P^{n}_{\partial X^{(j)}}},
		\]
		where $\pi^{\partial X^{(j)}}\colon \P^n_{\partial X^{(j)}}\to \partial X^{(j)}$ and $\pi^{\partial X^{(j-1)}}_H\colon \P^{n-1}_{\partial X^{(j-1)}}\to \partial X^{(j-1)}$ are the projections. By induction hypothesis and projective bundle formula we have a commutative diagram\[
		\begin{tikzcd}
			\bigoplus_{i=0}^{n} i_*^{(j)} W_m\Omega^{q-j-i}_{\partial X^{(j)}}[-i-1]\ar[phantom,d,"\oplus"]\ar[phantom,d,shift left =15,""{name=D1}]&R(\pi^{\partial X^{(j)}})_*W_m\Omega^{q-j}_{\P^{n}_{\partial X^{(j)}}}[-1]\ar[phantom,d,"\oplus"]\ar[phantom,d,shift right = 15, ""{name=D2}]\\
			\bigoplus_{i=0}^{n-1} W_m\Omega^{q-j-i}_{\partial X^{(j-1)}}[-i-1] \ar[d,"t"] &R(\pi^{\partial X^{(j-1)}}_H)_*W_m\Omega^{q-j}_{\P^{n-1}_{\partial X^{(j-1)}}}[-1]\ar[d,"f"]\\
			P_{j-1}W_m\Omega^q_{\ul{X}}\oplus \bigoplus_{i=1}^{n}	\iota_{*}^{(j-1)}W_m\Omega^{q-(j-1)-i}_{\partial X^{(j-1)}}[-i] \ar[r,"\simeq"] &R\pi_*P_{j-1}W_m\Lambda^q_{\cP^n_{X}}
			\arrow[from=D1,to=D2,"\simeq"]
		\end{tikzcd}
		\]
		Again, we can compute its cofiber as the cofiber of\[
		\bigoplus_{i=0}^{n} \iota_*^{(j)} W_m\Omega^{q-j-i}_{\partial X^{(j)}}[-i-1]\to P_{j-1}W_m\Omega^q_{\ul{X}}
		\]
		which again by \eqref{eq:weight-es} is\[
		P_jW_m\Omega^q_{\ul{X}}\oplus \bigoplus_{i=1}^n W_m\Omega^{q-i-1}_{\partial X^{(j)}}[-i].
		\]
		This concludes the proof.
	\end{proof}
	We are ready to show the following:
	\begin{thm}\label{thm:witt-local}
		The sheaf $W_m\Lambda^{q}$ is $(\P^n,\P^{n-1})$-local in the $\sZar$, $\sNis$ and $\mathrm{k\et}$ topologies. In particular, it is strictly $\div$-invariant.
		\begin{proof}By \cite[Proposition 3.27]{Niziol}, we have that for all $Y\in \SmlSm(S)$\[
			R\Gamma_{\sNis}(Y,W_m\Lambda^q_{S})\simeq R\Gamma_{\sZar}(Y,W_m\Lambda^q_{S})\simeq  R\Gamma_{\mathrm{k\et}}(Y,W_m\Lambda^q_{S})
			\]
			hence it is enough to show it only for the $\sZar$ topology.
			Let $P_j$ again be the weight filtration defined above.
			We have by definition that $W_m\Lambda^q \cong P_qW_m\Lambda^q$, so since $W_m\Lambda^{q-q-i}_{\partial X^{(q)}} = 0$ for $i\geq 1$, we conclude by Lemma \ref{lem:cohomology-weight-filtration} that $R\pi_*W_m\Lambda^q_{X\times (\P^n,\P^{n-1})}\simeq W_m\Lambda^q_X$. Finally by \cite[Theorem 7.7.4]{BPO}, any $(\sNis,(\P^\bullet,\P^{\bullet-1}))$-local object is automatically $\div$-local.
		\end{proof}
	\end{thm}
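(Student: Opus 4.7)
The plan is to use Lemma \ref{lem:cohomology-weight-filtration} as the main engine, applied at $j=q$, combined with a comparison of topologies. First, I would reduce to checking the statement for a single topology. By \cite[Proposition 3.27]{Niziol}, for any $Y \in \SmlSm(S)$ we have natural equivalences
\[
R\Gamma_{\sZar}(Y,W_m\Lambda^q_S) \simeq R\Gamma_{\sNis}(Y,W_m\Lambda^q_S) \simeq R\Gamma_{\mathrm{k\et}}(Y,W_m\Lambda^q_S),
\]
so $(\P^n,\P^{n-1})$-locality in all three topologies is equivalent to $(\P^n,\P^{n-1})$-locality in the $\sZar$ topology alone.

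Next, I would observe that the weight filtration stabilizes: by definition $P_q W_m\Lambda^q = W_m\Lambda^q$, since any section has log poles along at most $q$ components when the form itself has degree $q$. Thus applying Lemma \ref{lem:cohomology-weight-filtration} with $j=q$ gives
\[
R\pi_* W_m\Lambda^q_{X\times (\P^n,\P^{n-1})} \;\simeq\; W_m\Lambda^q_X[0] \oplus \bigoplus_{i=1}^{n} \iota^{(q)}_*\, W_m\Omega^{q-q-i}_{\partial X^{(q)}}[-i]
\]
in $\cD(\Sh(\ul{X}_{\sZar}))$. Since $q-q-i = -i < 0$ for every $i \geq 1$, each summand in the direct sum vanishes (there are no negatively-indexed differentials), leaving $R\pi_* W_m\Lambda^q_{X\times(\P^n,\P^{n-1})} \simeq W_m\Lambda^q_X$. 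This is precisely $(\P^n,\P^{n-1})$-locality of $W_m\Lambda^q$ in the $\sZar$ topology.

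Finally, to deduce strict $\div$-invariance, I would invoke \cite[Proposition 7.3.1 and Theorem 7.7.4]{BPO}, which states that any $(\sNis,(\P^\bullet,\P^{\bullet-1}))$-local object is automatically $\div$-local. Since the computation above works uniformly for all $n \geq 1$ and the class $(\P^n,\P^{n-1})$ for varying $n$ generates the relevant localizing subcategory, the conclusion follows. The main technical obstacle is really already contained in Lemma \ref{lem:cohomology-weight-filtration}, whose proof requires a careful induction on the weight filtration together with the projective bundle formula of Gros; once that is in hand, the present theorem is an essentially formal consequence of the degree vanishing $W_m\Omega^{<0} = 0$.
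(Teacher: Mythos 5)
Your proof is correct and follows exactly the paper's own argument: reduction to the $\sZar$ topology via \cite[Proposition 3.27]{Niziol}, the identification $W_m\Lambda^q = P_qW_m\Lambda^q$, the application of Lemma \ref{lem:cohomology-weight-filtration} at $j=q$ with the vanishing of the negatively-indexed summands, and the final appeal to \cite[Proposition 7.3.1 and Theorem 7.7.4]{BPO} for strict $\div$-invariance. Nothing to add.
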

	
	\begin{remark}\label{rmk:log-crys}
		Theorem \ref{thm:witt-local} implies that for every $m$, any $\sNis$ (resp. $\mathrm{k\et}$)-fibrant replacement of the truncated de Rham--Witt complex $W_m\Lambda_{S}^{\bullet}$ is $(\dNis,\bcube)$-fibrant. Hence, the functor \[
		\SmlSm(S) \to \cD(W(k))^{op}\qquad X/S\mapsto \holim_m R\Gamma(X,W_m\Lambda^{\bullet}_{X/S})
		\]
		factors through $\mathbf{log}\mathcal{DA}^{\rm eff}(S,\Z)$ and $\mathbf{log}\mathcal{DA}^{\rm eff}_{\loget}(S,\Z)$ via the functor:
		\begin{align*}
			R\Gamma_{\mathrm{crys},S}\colon &\mathbf{log}\mathcal{DA}^{\rm eff}_{\loget}(S,\Z)\to \cD(W(k))^{op}\\
			&M\mapsto \lim_m\Map(M,W_m\Lambda_{-/S}^{\bullet}).
		\end{align*}
	\end{remark}
	We now concentrate on the case where $S=\Spec(k)$. In this case, since $W_m\Omega^n\in \RSC_{\Nis}(k)$ by \cite[Appendix B]{KSY}, we have $\Log(W_m\Omega^n)\in \logCI^{\ltr}$. Let $\ul{X}\in \Sm(k)$ and $i\colon D\subseteq X$ a smooth divisor, and let $X=(\ul{X},D)\in \SmlSm(k)$. By \eqref{eq:BRS-Gysin-ext} there is an exact sequence of $\Sh(\ul{X}_{\Nis})$:
	\begin{equation}\label{eq:almost-es-log}
		0\to W_m\Omega^n_{\ul{X}} \to  \Log(W_m\Omega^n)_{X} \to \gamma^1 W_m\Omega^n_{D}\to 0
	\end{equation}
	Moreover, by \cite[Theorem 11.8]{BindaRuellingSaito} there is an isomorphism\[
	\phi^1\colon W_m\Omega^{n-1}\cong \gamma^1 W_m\Omega^n
	\]
	that behaves as follows: let $Y\in \Sm(k)$ and let $\lambda_Y\colon  \gamma^1 W_m\Omega^n(Y)\hookrightarrow  W_m\Omega^n(Y\times (\A^1-\{0\}))$ be the map induced by $\Z_{\tr}(\A^1-\{0\})\twoheadrightarrow \G_m$. Recall the map of $\Sh((\A^1-\{0\})_{\Zar})$\[
	\dlog\colon (W_m\cO_{\A^1-\{0\}})^{\times} =
	\cO_{\A^1_{W_m(k)}-\{0\}}^{\times}\xrightarrow{f\mapsto \frac{df}{f}} Z\Omega^1_{\A^1_{W_m}-\{0\}/W_m}\twoheadrightarrow W_m\Omega^1_{\A^1-\{0\}}.\] 
	Then for $\omega\in W_m\Omega^{n-1}(Y)$ we have:
	\begin{equation}\label{eq:dlog}
		\lambda_Y\phi^1_Y (\omega) = \omega \cdot \dlog(\ul{t}) \quad\textrm{in }W_m\Omega^n(Y\times (\A^1-\{0\})),
	\end{equation}
	where $\ul{t}\in W_m\cO^{\times}(\A^1-\{0\})$ is the Teichm\"uller lift of $t$. Then \eqref{eq:almost-es-log} translates in the following exact sequence in $\Sh(\ul{X}_{\Nis})$:
	\begin{equation}\label{eq:es-log}
		0\to W_m\Omega^n_{\ul{X}}\to \Log(W_m\Omega^n)_{X}\to i_*W_m\Omega^{n-1}_{D}\to 0,
	\end{equation}
	\begin{thm}\label{thm:de-rham-witt-rsc-log}
		There is an isomorphism of sheaves $W_m\Lambda^n\simeq\Log(W_m\Omega^n)$. In particular $W_m\Lambda^n\in \logCI^{\ltr}(k,W_m(k))$ and the Frobenius, Verschiebung, differential and restrictions are morphisms of log presheaves with transfers.
		\begin{proof}
			Let $\Log(W_m\Omega^n)\in \logCI^{\ltr}(k,W_m(k))$. We have that 
			\begin{equation}\label{eq:iso-drw}
				\omega_\sharp\Log(W_m\Omega^n) \cong W_m\Omega^n \cong \omega_\sharp W_m\Lambda^n
			\end{equation}
			We will show that \eqref{eq:iso-drw} lifts to an isomorphism $W_m\Lambda^n\cong \Log(W_m\Omega^n)$ in $\logCI(k,W_m(k))$. This will give the transfers structure on $W_m\Lambda^n$ in the following way: for $\alpha\in \lCor(Y,X)$, we have a commutative diagram:\[
			\begin{tikzcd}
				W_m\Lambda^n(X)\ar[r,"\simeq"] &\Log(W_m\Omega^n)(X)\ar[r,hook]\ar[d,"\alpha^*"]&W_m\Omega^n(X^\circ)\ar[d,"(\alpha^{\circ})^*"]\\
				W_m\Lambda^n(Y)\ar[r,"\simeq"]&\Log(W_m\Omega^n)(Y)\ar[r,hook] &W_m\Omega^n(Y^\circ)
			\end{tikzcd}
			\]
			The morphisms $F,V,d$ and $R$ preserve transfers by \cite[Lemma 7.7]{RulSaito}.
			
			We need to study the maps $i_*W_m\Omega^{n-1}_{k(x)}\to W_m\Omega^n_{\A^1_{k(x)}}[1]$ given by the two Gysin maps in $D(\Sh((\A^1_{k(x)})_{\Zar})$: if the two are the same, then we conclude by Theorem \ref{thm:Gysin}. 
			By \cite[Proposition 4.7]{mokrane}, the Gysin map of $W_m\Lambda^n$ agrees with the Gysin map of \cite{Gros1985}, so it is enough to show that the Gysin maps of \cite{BindaRuellingSaito} and  \cite{Gros1985} are the same. 
			This was stated without proof in \cite[11.10]{BindaRuellingSaito}: a proof was communicated to the author by Kay R\"ulling and we write it here. By construction both maps factor through the local Gysin maps:
			\begin{align*}
				&g_{0/\A^1}^{\rm Gros}\colon i_*W_m\Omega^{n-1}_{k(x)}\to R^1\ul{\Gamma}_{\{0\}}(W_m\Omega^n_{\A^1})\\
				&i_*W_m\Omega^{n-1}_{k(x)}\overset{i_*\phi^1_{k(x)}}{\cong} i_*(\gamma^1W_m\Omega^n)_{k(x)} \xrightarrow{g_{0/\A^1}^{\rm BRS}} R^1\ul{\Gamma}_{\{0\}}(W_m\Omega^n_{\A^1})
			\end{align*}
			of \eqref{eq:Gros-Gysin} and \eqref{eq:BRS-Gysin} composed with $\phi^1$ of \cite[Theorem 11.8]{BindaRuellingSaito}, hence it is enough that they agree before the ``forget support" map. Let us fix the isomorphism\[
			R^1\ul{\Gamma}_{\{0\}}(W_m\Omega^n_{\A^1})\cong j_*W_m\Omega^{n}_{\A^1_{k(x)}-\{0\}}/W_m\Omega^n_{\A^1_{k(x)}}.
			\] 
			Let $t$ be the local equation of $0$ in $\A^1$, and let $U$ be an open neighborhood of $0$. Let  $0\xrightarrow{i} \A^1\xrightarrow{q} 0$ be the splitting of the inclusion. Let $\omega\in W_m\Omega^{n-1}(k(x))$. By \cite[II, Prop 3.3.9]{Gros1985} we have that\[
			(g_{0/\A^1}^{\rm Gros})_U(\omega) = [q^*\omega\cdot \dlog(\ul{t})]\quad\textrm{in } W_m\Omega^{n}(U-\{0\})/W_m\Omega^n(U) \cong W_m\Omega^{n}(\A^1_{k(x)}-\{0\})/W_m\Omega^n(\A^1_{k(x)})
			\] 
			where $\ul{t}\in W_m\cO^{\times}(\A^1-\{0\})$ is the Teichm\"uller lift of $t$. 
			
			On the other hand, we have that the map\[
			i_*W_m\Omega^{n-1}(U)\to i_*q_*\gamma^1W_m\Omega^{n}(U)
			\]
			coincides with the map $W_m\Omega^{n-1}(k(x))\to \gamma^1W_m\Omega^{n}(\A^1_{k(x)})$ induced by $q$ and $\phi^1$. Moreover, the map\[
			i_*q_*\gamma^1W_m\Omega^{n}(U)\to R^1\ul{\Gamma}_{\{0\}}(W_m\Omega^n_{\A^1})(U)
			\]
			coincides with the map\[
			\gamma^1W_m\Omega^{n}(\A^1_{k(x)})\to W_m\Omega^n(\A^1_{k(x)}-\{0\})/W_m\Omega^n(\A^1_{k(x)}) \cong W_m\Omega^n(U-\{0\})/W_m\Omega^n(U)
			\]
			given by \eqref{eq:BRS-cycle-class} evaluated at $0\subseteq \A^1_{k(x)}$. In particular, for $\omega\in W_m\Omega^{n-1}(k(x))$, by unwinding the definition of \eqref{eq:BRS-local-Gysin} we have that $g_{0/\A^1}^{\textrm{BRS}}i_*\phi_{k(x)}(\omega)$ agrees with the class of\[
			[\Delta^*(\Gamma_t \times id)^*\lambda_{\A^1}^*\phi_{\A^1}^*q^*(\omega)] \quad\textrm{in } W_m\Omega^n(\A^1_{k(x)}-\{0\})/W_m\Omega^n(\A^1_{k(x)})
			\]
			By \eqref{eq:dlog} we have that \[
			\lambda_{\A^1}^*\phi_{\A^1}^*q^*(\omega) = q^*\omega\mapsto q^*\omega\wedge \dlog(\ul{t}).
			\]
			Moreover, we have that\[
			(\Gamma_t \times id)^*(q^*\omega\wedge  \dlog(\ul{t})) = pr_2^*(q^*\omega)\wedge \dlog(\ul{t}\otimes 1) \quad\textrm{in } W_m\Omega^{n}(\A^1_{k(x)}\times\A^1-\{0\}),
			\]
			where $pr_2$ is the second projection. By composing with $\Delta^*$ we conclude that \[
			g_{0/\A^1}^{\textrm{BRS}}i_*\phi_{k(x)}(\omega) = [\Delta^*(pr_2^*(q^*\omega)\wedge \dlog(\ul{t}\otimes 1))] = [q^*\omega\cdot \dlog(\ul{t})] = g_{0/\A^1}^{\textrm{Gros}}(\omega).
			\]
			This concludes the proof.
		\end{proof}
	\end{thm}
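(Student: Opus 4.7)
My plan is to exploit the lifting criterion of Theorem \ref{thm:Gysin} to transport the log-transfer structure from the reciprocity-side object $\Log(W_m\Omega^n)$, which comes equipped with transfers for free by \cite{shujilog}, to $W_m\Lambda^n$. Concretely, I will construct an isomorphism $W_m\Lambda^n \cong \Log(W_m\Omega^n)$ inside $\logCI(k,W_m(k))$. Once this is done, the transfer action on $\Log(W_m\Omega^n)$ transfers automatically to $W_m\Lambda^n$, and the Frobenius, Verschiebung, differential and restriction morphisms will inherit compatibility with transfers from the fact that they are morphisms in $\RSC_{\Nis}$ (\cite[Lemma 7.7]{RulSaito}), hence pushed to morphisms in $\logCI^{\ltr}$ via $\Log$.

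First I would note that there is a canonical identification $\omega_\sharp W_m\Lambda^n \cong W_m\Omega^n \cong \omega_\sharp \Log(W_m\Omega^n)$, so there is a tautological map $\phi$ between the $\omega_\sharp$'s. To invoke Theorem \ref{thm:Gysin}, I need to check that $\phi$ is compatible with the Gysin extensions on both sides. The Gysin sequence for $\Log(W_m\Omega^n)$ is \eqref{eq:almost-es-log}--\eqref{eq:es-log} coming from Binda--R\"ulling--Saito, using the identification $\phi^1\colon W_m\Omega^{n-1}\cong \gamma^1 W_m\Omega^n$. The Gysin sequence for $W_m\Lambda^n$ is the weight-filtration extension \eqref{eq:weight-es-sm}, whose class gives $g^{\rm Gros}_{D/X}$ of \eqref{eq:Gros-Gysin}; by Mokrane this coincides with the classical Gros Gysin map, locally of the form $\omega\mapsto \omega\wedge \dlog(t)$.

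The main technical obstacle, and the part I would spend most effort on, is proving that these two local Gysin maps $g^{\rm Gros}_{0/\A^1}$ and $g^{\rm BRS}_{0/\A^1}\circ i_*\phi^1_{k(x)}$ actually agree as maps
\[
W_m\Omega^{n-1}_{k(x)} \;\longrightarrow\; R^1\underline{\Gamma}_{\{0\}}(W_m\Omega^n_{\A^1_{k(x)}}) \cong j_*W_m\Omega^n_{\A^1_{k(x)}-\{0\}}/W_m\Omega^n_{\A^1_{k(x)}}.
\]
This is asserted without proof in \cite[11.10]{BindaRuellingSaito}. I would compute both sides explicitly: on the Gros side, using a retraction $q\colon \A^1\to \{0\}$, the class is $[q^*\omega\cdot \dlog(\underline t)]$; on the BRS side, one unwinds the definition of the cycle class map \eqref{eq:BRS-cycle-class} at $0\subseteq \A^1_{k(x)}$, pre-composing with $\phi^1$. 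Using the explicit formula $\lambda_Y\phi^1_Y(\omega)=\omega\cdot \dlog(\underline t)$ from \eqref{eq:dlog}, together with the identity $(\Gamma_t\times \id)^*(q^*\omega\wedge \dlog(\underline t))=\mathrm{pr}_2^*(q^*\omega)\wedge \dlog(\underline t\otimes 1)$ and a diagonal pullback, one obtains again $[q^*\omega\cdot \dlog(\underline t)]$, establishing the desired equality.

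Once the two Gysin maps agree, Theorem \ref{thm:Gysin} delivers a lift $\widetilde{\phi}\colon W_m\Lambda^n \to \Log(W_m\Omega^n)$ in $\logCI$; applying the same argument in the other direction (or noting that $\omega_\sharp \widetilde\phi$ is an isomorphism and $\omega_\sharp$ is conservative on unramified sheaves by Theorem \ref{thm:unramified}) shows $\widetilde\phi$ is an isomorphism. Finally, for any $\alpha\in \lCor(Y,X)$ one defines $\alpha^*$ on $W_m\Lambda^n(X)$ via the square
\[
\begin{tikzcd}
W_m\Lambda^n(X)\ar[r,"\sim"]\ar[d,dashed] & \Log(W_m\Omega^n)(X)\ar[d,"\alpha^*"]\\
W_m\Lambda^n(Y)\ar[r,"\sim"] & \Log(W_m\Omega^n)(Y),
\end{tikzcd}
\]
which is well-defined and functorial because $\widetilde\phi$ is an isomorphism of log sheaves. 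The compatibility of $F,V,d,R$ with the transfer structure then follows immediately from their compatibility with transfers on $\Log(W_m\Omega^n)$ via \cite[Lemma 7.7]{RulSaito}, concluding the proof.
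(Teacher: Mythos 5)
Your proposal is correct and follows essentially the same route as the paper: identify $\omega_\sharp W_m\Lambda^n$ with $\omega_\sharp\Log(W_m\Omega^n)$, verify that the Gros/Mokrane Gysin map and the Binda--R\"ulling--Saito Gysin map composed with $\phi^1$ agree via the explicit local computation $[q^*\omega\cdot\dlog(\underline{t})]$, and then apply Theorem \ref{thm:Gysin} to lift the identification to $\logCI$ and transport the transfer structure. The explicit comparison of the two local Gysin classes you outline is precisely the argument given in the paper.
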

	\begin{cor}\label{cor:dRW-P1}
		We have that\[
		\uExt^i(\Z_{\ltr}(\P^1,\triv),W_m\Lambda^n) \cong \begin{cases}
			W_m\Lambda^n &\textrm{if }i=0\\
			W_m\Lambda^{n-1} &\textrm{if }i=1\\
			0 &\textrm{otherwise}
		\end{cases}
		\]
	\end{cor}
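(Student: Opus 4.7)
The plan is to reduce Corollary \ref{cor:dRW-P1} to a projective-bundle-type computation for $W_m\Lambda^n$ along the projection $\pi\colon \ul{X} \times \P^1 \to \ul{X}$, where $X \times_{\triv} \P^1 := (\ul{X} \times \P^1, \partial X \times \P^1)$ carries the log structure pulled back from $X$. Since $W_m\Lambda^n$ is $\mathrm{div}$-invariant by Theorem \ref{thm:witt-local} and has vanishing higher strict Nisnevich cohomology on henselian local log schemes by \cite[(7.8.1)]{BPO}, the sheaf $\uExt^i(\Z_{\ltr}(\P^1, \triv), W_m\Lambda^n)$ is determined by its values at henselian local $X$, which equal $H^i_{\sNis}(\ul X, R\pi_* W_m\Lambda^n_{X \times_{\triv} \P^1})$. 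Thus the statement reduces to the derived identity
\[
R\pi_* W_m\Lambda^n_{X \times_{\triv} \P^1} \cong W_m\Lambda^n_X \oplus W_m\Lambda^{n-1}_X[-1] \quad \text{in } \cD(\Sh(\ul{X}_{\Nis})).
\]

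To establish this, I would mimic the inductive argument of Lemma \ref{lem:cohomology-weight-filtration}, but adapted to the trivial log structure on the $\P^1$-factor. Concretely, one proves by induction on the weight $j$ the refined identity $R\pi_* P_j W_m\Lambda^n_{X \times_{\triv} \P^1} \cong P_j W_m\Lambda^n_X \oplus P_j W_m\Lambda^{n-1}_X[-1]$. The base case $j = 0$ is the classical projective bundle formula of \cite{Gros1985} applied to $\ul X \times \P^1 \to \ul X$, since $P_0 W_m\Lambda^n = W_m\Omega^n$. For the inductive step, the weight-filtration short exact sequence \eqref{eq:weight-es} on $X \times_{\triv} \P^1$ (noting $\partial(X \times_{\triv} \P^1)^{(j)} = \partial X^{(j)} \times \P^1$), combined with Gros' PBF applied to each $\partial X^{(j)} \times \P^1 \to \partial X^{(j)}$, produces a fiber sequence whose outer terms split as direct sums with an unshifted and a $[-1]$-shifted summand.

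The main obstacle lies in showing that the connecting map of this fiber sequence is block-diagonal along the splitting: the unshifted summand must reproduce the weight-filtration extension class of \eqref{eq:weight-es} for $W_m\Lambda^n_X$, and the shifted summand must reproduce the analogous extension for $W_m\Lambda^{n-1}_X$, with no cross-terms. This compatibility follows from the fact that the residue map for the divisor $\partial X^{(j)} \times \P^1 \subset \ul X \times \P^1$ is the pullback along $\pi$ of the residue map on $X$, together with the commutation between $\pi^*$ and cup-product with the generator $c_1(\cO_{\P^1}(1))$ controlling the $[-1]$-summand of Gros' PBF --- precisely the mechanism already used in the proof of Lemma \ref{lem:cohomology-weight-filtration}. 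Once this diagonal identification of connecting maps is verified, the induction closes at $j = n$, where $P_n W_m\Lambda^n = W_m\Lambda^n$, and evaluating on henselian stalks of $\SmlSm(k)$ yields the three cases $i = 0$, $i = 1$, and $i \geq 2$ of the corollary.
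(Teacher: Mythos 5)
Your reduction and strategy are correct, but they follow a genuinely different route from the paper. The paper disposes of $i=0$ by $\bcube$-invariance and of $i\geq 2$ by purity (restriction to the generic point $\eta_T$ of a henselian local $T$ is injective, and $\P^1_{\eta_T}$ has cohomological dimension $1$), and then handles the only hard case $i=1$ motivically: Proposition \ref{prop:h0-gm} identifies $h_1^{\bcube,\ltr}(\P^1,\triv)$ with $\omega^*\G_m$, Lemma \ref{lemma:trick} converts $\omega_\sharp\uExt^1(\Z_{\ltr}(\P^1,\triv),W_m\Lambda^n)$ into $\uHom_{\Sh_{\Nis}^{\tr}}(\G_m,W_m\Omega^n)$, the contraction computation of \cite[Theorem 11.8]{BindaRuellingSaito} identifies the latter with $\omega_\sharp W_m\Lambda^{n-1}$, and Theorem \ref{thm:almost-conj} lifts this to an isomorphism in $\logCI^{\ltr}$. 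Your proof instead computes $R\pi_*W_m\Lambda^n_{X\times(\P^1,\triv)}$ directly by a weight-filtration induction modelled on Lemma \ref{lem:cohomology-weight-filtration}; this is legitimate and in fact easier than the case treated there, since the trivial log structure on the $\P^1$-factor means $\partial(X\times(\P^1,\triv))^{(j)}=\partial X^{(j)}\times\P^1$ contributes no extra hyperplane summand, and the base case is exactly Gros' projective bundle formula. What your route buys is self-containedness: it avoids Appendices \ref{appendix-A} and \ref{appendix-conjecture} entirely and is a purely coherent-cohomological argument, consistent with \eqref{eq:PBF-witt}. What it costs is twofold. First, the block-diagonality of the connecting map, which you rightly isolate as the crux, must be checked at the level of an explicit resolution (the residue map of \eqref{eq:weight-es} commutes with $\pi^*$ and with cup product by $c_1(\cO_{\P^1}(1))$); this is the same kind of verification as the identity $f(\pi_H^*(a)\cup\xi_H^j)=\pi^*(a)\cup\xi^{j+1}$ in Lemma \ref{lem:cohomology-weight-filtration}, so it is acceptable but not free. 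Second, $\uExt^1$ is an object of $\Sh_{\dNis}^{\ltr}$, and downstream (e.g.\ Theorem \ref{thm:pbf}, which works in $\logDM$) one wants the identification with $W_m\Lambda^{n-1}$ as sheaves with log transfers; the paper's lift via Theorem \ref{thm:almost-conj} delivers this automatically, whereas your splitting is a priori only constructed in $\cD(\Sh(\ul{X}_{\Nis}))$ and its compatibility with transfers would require an additional argument (e.g.\ transfer-equivariance of the cycle class $\xi$).
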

	\begin{proof}
		The case $i=0$ follows from the $\P^1$-invariance of objects in $\logCI$.
		Let $T\in \widetilde{\SmlSm(k)}$ be Hensel local and let $\eta_T$ be its generic point. Then by \cite[Theorem 5.10]{BindaMerici} we have\[
		\uExt^i_{\Sh_{\dNis}^{\ltr}}(\Z_{\ltr}(\P^1,\triv),W_m\Lambda^n)(T)\hookrightarrow H^i(\P^1_{\eta_T},W_m\Omega^n) = 0\quad \textrm{for }i\geq 2.
		\]
		It remains to prove the case $i=1$. By the cartesian square \eqref{eq:MV-P1} and Theorem \ref{thm:de-rham-witt-rsc-log} we have an isomorphism\[
		\uExt^1_{\Sh_{\dNis}^{\ltr}}(\Z_{\ltr}(\P^1,\triv),W_m\Lambda^n)\cong \uHom(\Z_{\ltr}((\P^1,0+\infty),i_1),\Log(W_m\Omega^n))
		\]
    For $X=(\ul{X},D)\in \SmlSm$, by the formula \cite[(6.0.2)]{shujilog} we have \[
    \uHom(\Z_{\ltr}((\P^1,0+\infty),i_1),\Log(W_m\Omega^n))(X) = \Hom_{\CI_{\Nis}}(h_{0,\Nis}^{\bcube,sp}(X,D_{\red}),\uHom(h_{0,\Nis}^{\bcube,sp}(\bcube^{(1)})/\Z,\widetilde{W_m\Omega^n}),
    \]
    see \cite[\S1.2 and Notation 2.1]{BindaRuellingSaito} for the notation. By \cite[Lemma 2.4 and Theorem 11.8]{BindaRuellingSaito} the right hand side is isomorphic to $\Log(W_m\Omega^{n-1})(X)$, which finishes the proof. 
	\end{proof}
	
	Let $G\in\logCI^{\ltr}(k,A)$, $X\in \SmlSm$ and $\cE\to X$ a vector bundle of rank $n+1$. Then if $k$ satisfies RS1 and RS2, the projective bundle formula of \cite[Theorem 8.3.5]{BPO} gives an equivalence\[
	R\Gamma_{\dNis}(\P(\cE),G)\simeq \bigoplus_{i=0}^n \Map_{\logDM}(M(X),\ul{\Map} ((M(\P^1),i_{0})^{\times i},G)),
	\]
	where the notation $(-,i_{0})$ is again the complement of the split analogous to \eqref{eq:deg-zero}. In particular, we deduce the projective bundle formula for $W_m\Lambda^n$, generalizing \cite{Gros1985}:
	\begin{equation}\label{eq:PBF-witt}
		R\Gamma(\P(\cE),W_m\Lambda^q) \simeq \bigoplus_{i=0}^{n} R\Gamma(X,W_m\Lambda^{q-i}),\qquad R\Gamma(\P(\cE),W\Lambda) \simeq \bigoplus_{i=0}^{n} R\Gamma(X,W\Lambda^{q-i}).
	\end{equation}
	Moreover, let $Z\subseteq \ul{X}$ be a smooth closed subscheme of codimension $d$ that intersects $|\partial X|$ transversally. Let $\rho\colon\widetilde{\ul{X}}\to \ul{X}$ be the blow up of $\ul{X}$ in $Z$ and consider the log scheme $\widetilde{X}=(\widetilde{\ul{X}},\partial X + E_Z)$ Then by \cite{BPO} there is a fiber sequence\[
	\Map(MTh(N_{Z}),G)\to R\Gamma(X,G)\to R\Gamma(\widetilde{X},G)
	\]
	On the other hand, there is a fiber sequence\[
	\P(N_Z)\to \P(N_Z\oplus \cO)\to MTh(N_Z)
	\]
	which by the projective bundle formula above implies that
	\[
	\Map_{\logDM}(MTh(N_{Z}),G)\simeq \Map_{\logDM}(M(Z),\ul{\Map}((M(\P^1),i_{0})^{\times d},G)).
	\]
	so we get a fiber sequence
	\begin{equation}\label{eq:purity}
		R\Gamma(Z,W_m\Lambda^{n-d})[-d]\to R\Gamma(X,W_m\Lambda^n)\to R\Gamma(\widetilde{X},W_m\Lambda^n)
	\end{equation}
	If $\partial X$ is trivial and $Z$ is a divisor, then the Gysin sequence above agrees with \eqref{eq:weight-es} of \cite{mokrane}. In general, it agrees with the Gysin sequence of \cite[Corollary 11.10 (2)]{BindaRuellingSaito} for a reduced modulus pair. 
	\begin{remark}
		The projective bundle formula and the Gysin sequences above were obtained in \cite{BindaRuellingSaito} without the assumption of resolutions of singularities, and are in fact independent of the results of \cite{BPO}.
	\end{remark}
	Finally, recall the Raynaud ring $R(k)$ of Ekedahl and the map n $\cD(R(k))$ (see \ref{ssec:logdRW}):
	\begin{equation}
		R\Gamma(X,W\Lambda)\widehat{\star^{L}} R\Gamma(Y,W\Lambda) \to R\Gamma(X\times Y,W\Lambda). \tag{\ref{eq:kunneth}}
	\end{equation}
	\begin{prop}\label{prop:kunneth}
		The map \eqref{eq:kunneth} is an equivalence in $\cD(R(k))$.
		\begin{proof}
			We proceed by induction on the dimension of $X\times Y$ and the number of components of $\partial (X\times Y)$.  If $\partial (X\times Y)$ is trivial, then $\partial X$ and $\partial Y$ are trivial so  \eqref{eq:kunneth} is an equivalence by \cite[Theorem 5.1.2]{Illusiefinite}. If $\dim(X\times Y)=0$, then $\partial(X\times Y)$ is again trivial so we can invoke \cite{Illusiefinite} again. In general, let $|\partial X|=D_1+\ldots D_r$ with $r\geq 1$ and consider $|\partial X|^{-}=D_2+\ldots D_r$. Let $X^-:=(X,\partial X^-)$, so that by \cite[I. Theorem 6.2]{EkedahlII} there is a commutative diagram in $\cD(R(k))$\footnote{Notice that, even though \cite[I. Theorem 6.2]{EkedahlII} is stated for the homotopy categories, the proof shows that $\widehat{\star^{L}}$ is indeed homotopy coherent}\[ \begin{tikzcd}
				R\Gamma(X^{-},W\Lambda)\widehat{\star^{L}} R\Gamma(Y,W\Lambda) \ar[d,"(1)"] \ar[r]&R\Gamma(X,W\Lambda)\widehat{\star^{L}} R\Gamma(Y,W\Lambda)\ar[d,"(2)"]\\
				R\Gamma(X^{-}\times Y,W\Lambda)\ar[r]&R\Gamma(X\times Y,W\Lambda)
			\end{tikzcd}
			\]
			By induction hypothesis, (1) is an equivalence, so to show that (2) is an equivalence it is enough to show that the map on the fibers is an equivalence. By \cite[Theorem 7.5.4]{BPO} together with \eqref{eq:purity} (or by using directly \cite[Corollary 11.10]{BindaRuellingSaito}), the fiber is\[
			R\Gamma(D_1,W\Lambda)\widehat{\star^{L}} R\Gamma(Y,W\Lambda)\to R\Gamma(D_1\times Y,W\Lambda)
			\]
			which is an equivalence by induction on dimension.
		\end{proof}
	\end{prop}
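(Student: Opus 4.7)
I would proceed by a double induction: the outer variable is $\dim \underline{X \times Y}$, and the inner variable is the total number of irreducible components of $|\partial X| + |\partial Y|$. The base case covers $\partial X = \partial Y = \emptyset$ (which in particular includes $\dim \underline{X \times Y} = 0$), in which $W\Lambda^{\bullet}_{X}$ coincides with the ordinary de Rham--Witt complex of $\underline{X}$. There \eqref{eq:kunneth} is the classical Ekedahl--Illusie--Raynaud Künneth map, and it is an equivalence by \cite[Corollary 1.2.5]{EkedahlII} together with the small descent argument from \cite[Theorem 5.1.2]{Illusiefinite}.

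For the inductive step, I would fix a component of $|\partial X|$, write $|\partial X| = D_1 + \cdots + D_r$ with $r \geq 1$, and set $X^- := (\underline{X}, D_2 + \cdots + D_r)$, which has strictly fewer boundary components than $X$. Both the weight-filtration sequence \eqref{eq:weight-es} and the Gysin sequence \eqref{eq:purity} (equivalently \cite[Corollary 11.10]{BindaRuellingSaito}) furnish a cofiber sequence
\[
R\Gamma(X^-, W\Lambda) \longrightarrow R\Gamma(X, W\Lambda) \longrightarrow R\Gamma(D_1, W\Lambda)[-1]
\]
in $\cD(R(k))$ (with $D_1$ equipped with the log structure induced by the remaining divisors), and an entirely parallel one after taking products with $Y$. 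Using the homotopy-coherent product structure of \cite[I, Theorem 6.2]{EkedahlII}, I would tensor the first sequence with $R\Gamma(Y, W\Lambda)$ via $\widehat{\star^L}$ and compare it to the second through the natural Künneth map \eqref{eq:kunneth}. This yields a commutative ladder of cofiber sequences whose outer vertical maps are the Künneth maps for $(X^-, Y)$ and $(D_1, Y)$. By the inductive hypotheses, the former is an equivalence (one fewer boundary component) and the latter is an equivalence (since $\dim D_1 < \dim \underline{X}$, so $\dim(D_1 \times Y) < \dim \underline{X \times Y}$). The two-out-of-three property in the stable $\infty$-category $\cD(R(k))$ then forces the Künneth map for $(X, Y)$ to be an equivalence as well.

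The main obstacle I anticipate is not the induction itself, which is formal once set up, but rather verifying that $\widehat{\star^L}$ preserves the Gysin cofiber sequences with the required homotopy-coherence, and that \eqref{eq:kunneth} is genuinely natural with respect to both the inclusion $X^- \hookrightarrow X$ of log structures and the residue maps to $D_1$. The preservation of fiber sequences by $\widehat{\star^L}$ is in substance \cite[I, Theorem 6.2]{EkedahlII}, though that reference is phrased at the level of triangulated categories; one should check that the dga structure of the log de Rham--Witt complex and the \v{C}ech description of the product lift the statement to the level of the stable $\infty$-category $\cD(R(k))$. Once these compatibilities are recorded, the induction goes through without further difficulty.
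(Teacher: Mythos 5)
Your proposal is correct and follows essentially the same route as the paper: base case via Ekedahl--Illusie for trivial log structure, then induction peeling off one component $D_1$ of $\partial X$ using the Gysin cofiber sequence, tensoring with $R\Gamma(Y,W\Lambda)$ via the homotopy-coherent $\widehat{\star^L}$ of \cite[I, Theorem 6.2]{EkedahlII}, and concluding by two-out-of-three. The coherence issue you flag is exactly the point the paper disposes of in a footnote, so there is nothing further to add.
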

	\begin{remark}
		Let $X\in \SmlSm(k)$. By \cite[Corollary 2.4]{shujilog} we have that for all $0\leq d\leq \dim(\ul{X})$, $y\in \ul{X}^{(d)}$:\[
		H^j_{y}(X,W_m\Lambda^q) = 0\quad \textrm{if }j\neq d.
		\]
		This implies that the complex\[
		\ldots \to \bigoplus_{x\in \ul{X}^{(d)}}H^d_{x}(X,W_m\Lambda^n)\to\bigoplus_{x\in \ul{X}^{(d+1)}} H^{d+1}_{x}(X,W_m\Lambda^n)\to \ldots
		\]
		computes $H^d_{\dNis}(X,W_m\Lambda^q)$ since $W_m\Lambda^q$ is strictly div-invariant by Theorem \ref{thm:witt-local} and \cite{BPO}.
		This generalizes \cite[5.1]{Gros1985} to $W_m\Lambda^n$.
		
		Moreover, by \cite[Corollary 2.4]{shujilog}, for a fixed isomorphism $\epsilon_y\colon X_y^{h}\xrightarrow{\sim} \Spec(k(y)\{t_1\ldots t_d\})$ which sends $|\partial X_y^{h}|$ to the divisor $t_1^{e_1}\cdot\ldots \cdot t_d^{e_d}$ for some $e_i\in \{0,1\}$,\[
		H^q_{y}(X,W_m\Lambda^n) = 	\tau^{(e_1\cdots e_d)}W_m\Omega^n
		\]
		where the right hand side is defined recursively as:
		\begin{align*}
			&\tau^{(0)}W_m\Omega^n := \uHom((\Z_{\ltr}(\A^1,0),i_{1}),W_m\Lambda^n),\\
			&\tau^{(1)}W_m\Omega^n := \tau^{(0)}W_m\Omega^n/\uHom((\Z_{\ltr}(\P^1,0+\infty),i_{1}),W_m\Lambda^{n}),\\
			&\tau^{(e_1\cdots e_s)}W_m\Omega^n :=\tau^{(e_s)}\tau^{(e_1\cdots e_{s-1})}W_m\Omega^n,
		\end{align*}
	\end{remark}
	where $(-,i_{1})$ is as in \eqref{eq:deg-zero} the complement of the section $i_1\colon 1\to \A^1\to \P^1$.
	\section{The motivic integral \texorpdfstring{$p$}{p}-adic cohomology}
	
	Theorem \ref{thm:de-rham-witt-rsc-log} implies that the truncated de Rham--Witt complex $W_m\Lambda^{\bullet}$ is a complex of objects in $\logCI^{\ltr}(k,A)$, in particular its cohomology sheaves are strictly $\bcube$-invariant for the $\dNis$ topology and since they are coherent sheaves, also for the $\mathrm{d\et}$ and by \cite[Proposition 3.27]{Niziol} for the $\loget$ topology. This similarly to Remark \ref{rmk:log-crys} implies that the cohomology of the log-de Rham--Witt complex:\[
	\SmlSm(k) \to \cD(W(k))^{op}\qquad X\mapsto \holim_m R\Gamma(X,W_m\Lambda^{\bullet})
	\]
	factors through $\logDM(k,W(k))$ and $\logDMlet(k,W(k))$, via the functor:
	\begin{align*}
		R\Gamma_{\mathrm{crys}}\colon &\logDMlet(k,W(k))\to \cD(W(k))^{op}\\
		&M\mapsto \lim_m\Map(M,W_m\Lambda^{\bullet}).
	\end{align*}
	Recall that for $\tau\in \{\Nis,\et\}$, there is a composition of functors
	\[
	\cD(\Sh_{d\tau}^{\ltr}(k,A))\xrightarrow{\omega_\sharp}\cD(\Sh_{\tau}^{\tr}(k,A))\xrightarrow{L_{(\A^1,\tau)}} \mathcal{DM}^{\rm eff}_{\tau}(k,A)
	\]
	such that for all $X\in \SmlSm(k)$,\[
	L_{(\A^1,\tau)}L\omega_\sharp A_{\ltr}(X\times \bcube)\simeq L_{(\A^1,\tau)}A_{\tr}(X^{\circ}\times \A^1)\simeq L_{(\A^1,\tau)}A_{\tr}(X^{\circ}) \simeq L_{(\A^1,\tau)}L\omega_\sharp A_{\tr}(X^{\circ}),
	\]
	hence it induces $L^{\ltr}\omega_{\sharp}\colon \mathbf{log}\mathcal{DM}^{\rm eff}_{\tau}(k,A)\to \mathcal{DM}^{\rm eff}_{\tau}(k,A)$ such that $L^{\ltr}\omega_{\sharp}L_{(\bcube,d\tau)}\simeq L_{(\A^1,\tau)}L\omega_\sharp$. Again, in the case where $\tau=\rm \Nis$, by \cite[Proposition 2.5.7]{DoosungA1log} $L^{\ltr}\omega_\sharp$ is left adjoint to $\omega^*$, which is fully faithful and its essential image is the category of $\A^1$-local objects in $\logDM$.
	By composing $\omega^*$ with $R\Gamma_{\rm crys}$ above we get the following cohomology:
	\begin{equation}\label{eq:diagram-crys}
		\begin{tikzcd}
			R\Gamma_p\colon	\mathcal{DM}^{\rm eff}(k,\Z)\ar[r,"\omega^*"]&\logDM(k,\Z)\ar[r,"L_{\mathrm{l\et}}"]&\mathbf{log}\mathcal{DM}^{\eff}_{\loget}(k,\Z)\ar[r,"R\Gamma_{\mathrm{crys}}"]&\cD(W(k))^{op}
		\end{tikzcd}
	\end{equation}
	We are now ready to prove Theorem \ref{thm:man-intro-absolute}: the cohomology $R\Gamma_p$ has indeed by design a map $R\Gamma_p(X-D)\to R\Gamma_{\rm crys}(X,D)$, functorial in $(X,D)$, which is universal among $(\A^1,\Nis)$-local complexes $C$ equipped with a map $C(X-D) \to R\Gamma_{\rm crys}(X,D)$, functorial in $(X,D)$. 

    It remains to prove the comparison with rigid cohomology, which we now spell out. 

    If $X$ admits a smooth compactification $(\ol{X},D)$, then by the comparison of \cite[Cor.11.7 1)]{nakkajima} we have a map \[
    R\Gamma_p(X)[1/p]\to R\Gamma_{\rm crys}(\ol{X},D)[1/p]\simeq R\Gamma_{\rm rig}(X/K).
    \]
    This map is functorial in the following sense: if $Y\to X$ is a map and $(\ol{Y},D_Y)\to (\ol{X},D_X)$ is a map between smooth compactifications, then \[
    \begin{tikzcd}
        R\Gamma_p(X)[1/p]\ar[r]\ar[d] &R\Gamma_{\rm crys}(\ol{X},D_X)[1/p]\simeq R\Gamma_{\rm rig}(X/K)\ar[d]\\
        R\Gamma_p(Y)[1/p]\ar[r] &R\Gamma_{\rm crys}(\ol{Y},D_Y)[1/p]\simeq R\Gamma_{\rm rig}(Y/K)
    \end{tikzcd}
    \]
    is commutative.
    By \cite[\S 9]{nakkajima}, for all $X\in \Sm_k$ there is a simplicial object $(\ol{X}_\bullet,D_{\bullet})$ in $\SmlSm(k)$ such that $X_\bullet:= \ol{X_\bullet}-|D_\bullet|$ is a proper hypercovering of $X$. This is functorial in $X$: by \cite[Proposition 9.4]{nakkajima} there is a commutative diagram in $\Sm_k$
	\begin{equation}\label{eq:alteration}
		\begin{tikzcd}
			X'_\bullet\ar[d]\ar[r,"\ol{f}^{\circ}"]&X_\bullet\ar[d]\\
			X'\ar[r,"f"]&X,
		\end{tikzcd} 
	\end{equation}
	where the vertical arrows are proper hypercovers coming from simplicial objects $(\ol{X}_\bullet,D_{\bullet})$ and $(\ol{X'}_\bullet,D'_{\bullet})$ in $\SmlSm(k)$ as before and the map $\ol{f}^{\circ}$ comes from a map of hypercovers $\ol{f} \colon (\ol{X'}_\bullet,D_\bullet')\to (\ol{X}_\bullet,D_\bullet)$ in $\SmlSm(k)$. Since both $R\Gamma_{\rm rig}$ and $R\Gamma_p[1/p]$ are $(\A^1,\et)$-local, they satisfy $h$-hyperdescent by the usual argument of \cite[Theorem 4.1.12]{VoevTriangCat} using \cite[Theorem 4.0.3]{KellyThesis} as an input instead of \cite[Theorem 4.1.2]{VoevTriangCat} (see also \cite{Tsuzuki} for an independent proof that $R\Gamma_{\rm rig}$ satisfies $h$-hyperdescent), so we conclude by \eqref{eq:alteration} that we have a map\[
		\begin{tikzcd}
			R\Gamma_p(X)[1/p]\ar[r]\ar[d,equal]&R\Gamma_{\rm rig}(X)\ar[d,equal]\\
            R\Gamma_p(X_\bullet)[1/p]\ar[r]&R\Gamma_{\rm rig}(X_{\bullet})
		\end{tikzcd}
		\]
		functorial in $X$: this concludes the proof.
	\subsection{Assuming resolution of singularities}
	If we assume that $k$ admits resolution of singularities, then \eqref{eq:diagram-crys} can be made more explicit. 
	\begin{nota}\label{nota:RS}
		We say that $k$ satisfies resolutions of singularities if the following two properties are satisfied (see \cite[Definition 7.6.3]{BPO}):
		\begin{enumerate}
			\item[(RS1)] For any integral scheme $X$ of finite type over $k$, there is a proper birational morphism $Y \to X$ of schemes over $k$, which is an isomorphism on the smooth locus, such that $Y$ is smooth over $k$.
			\item[(RS2)] Let $f \colon Y \to X$ be a proper birational morphism of integral schemes over $k$ such that $X$ is smooth over $k$ and let $Z_1, \ldots , Z_r$ be smooth divisors forming a strict normal crossing divisor on $X$. Assume that
			\[f^{-1}(X-Z_1 \cup\ldots \cup Z_r)\to X-Z_1 \cup\ldots \cup Z_r\] is an isomorphism. Then there is a sequence of blow-ups\[
			X_n\xrightarrow{f_{n-1}} X_{n-1} \xrightarrow{f_{n-2}}\ldots \xrightarrow{f_{0}}X_0\simeq X
			\]
			along smooth centers $W_i \subseteq X_i$ such that
			\begin{enumerate}[label=\alph*.]
				\item the composition $X_n \to X$ factors through $f$,
				\item $W_i$ is contained in the preimage of $Z_1 \cup \ldots \cup Z_r$ in $X_i$,
				\item $W_i$ has strict normal crossing with the sum of the reduced strict transforms of \[
				Z_1,\ldots,Z_r,f_0^{-1}(W_0),\ldots,f_{i-1}^{-1}(W_{i-1})\]
				in $X_i$.
			\end{enumerate}
		\end{enumerate}
	\end{nota}
	
	Recall that by \cite[Proposition 8.2.8]{BPO}, for $X\in \Sm(k)$ with smooth Cartier compactification $\ol{X}$, for $M^V(X)\in \mathcal{DM}^{\mathrm{eff}}(k,\Z)$ the Voevodsky motive of $X$, we have\[ R\omega^*_{\Nis}M^V(X) = M(\ol{X},\partial X)
	\]
	where $\partial X$ is the log structure supported on $\ol{X}-X$.
	\begin{prop}\label{prop:comparison-ESS}
		If $k$ admits RS1 and RS2, for $X\in \Sm(k)$, we have an equivalence:\[
		R\Gamma_{p}(X) \simeq R\Gamma_{\mathrm{crys}}((\overline{X},X)/W(k))
		\]
        In particular, for $K=W(k)[1/p]$ we have an equivalence $R\Gamma_{p}(X)[1/p]\simeq R\Gamma_{\rm rig}(X/K)$.
		\begin{proof}
            The second part follows from the first and \cite[Cor.11.7 1)]{nakkajima}.
			Since $k$ admits resolutions of singularities, there exists $X\hookrightarrow \overline{X}$ a Cartier compactification and let $\partial X$ be the log structure supported on the divisor $\overline{X}-X$. Then the result follows from the equivalence $\omega^*M^V(X) = M(\overline{X},\partial X)$ of \cite[Proposition 8.2.8]{BPO}.
		\end{proof}
	\end{prop}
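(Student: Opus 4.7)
The plan is to unwind the definition of $R\Gamma_p$ from the composition \eqref{eq:diagram-crys} and to invoke the key identification of $\omega^*_{\Nis}M^V(X)$ with an explicit log motive provided by \cite[Proposition 8.2.8]{BPO}.

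First, I would fix a smooth Cartier compactification $\ol{X}$ of $X$ with log structure $\partial X$ supported on the strict normal crossing divisor $\ol{X}-X$; the existence of such a compactification is guaranteed by RS1 (applied to the closure of $X$ in some projective embedding) together with RS2 (to arrange that the boundary is a strict normal crossing divisor). Then by \cite[Proposition 8.2.8]{BPO}, under RS1 and RS2 there is a canonical equivalence
\[
\omega^*_{\Nis}M^V(X) \simeq M(\ol{X},\partial X)
\]
in $\logDM(k,\Z)$, which is moreover independent of the choice of compactification $(\ol{X},\partial X)$.

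Next, chasing through \eqref{eq:diagram-crys},
\[
R\Gamma_p(X) \;=\; R\Gamma_{\mathrm{crys}}\bigl(L_{\loget}\,\omega^*_{\Nis}M^V(X)\bigr) \;\simeq\; R\Gamma_{\mathrm{crys}}\bigl(L_{\loget}M(\ol{X},\partial X)\bigr).
\]
By Remark \ref{rmk:log-crys}, combined with Theorem \ref{thm:de-rham-witt-rsc-log} which ensures that each truncation $W_m\Lambda^\bullet$ lifts to $\logDMlet(k,W(k))$, the right-hand side is computed as
\[
\holim_m R\Gamma\bigl((\ol{X},\partial X),\,W_m\Lambda^{\bullet}_{(\ol{X},\partial X)/W(k)}\bigr),
\]
which is by definition the Hyodo--Kato log crystalline cohomology $R\Gamma_{\mathrm{crys}}((\ol{X},\partial X)/W(k))$ of the pair $(\ol{X},\partial X)$ (whose trivial locus is $X$), as desired.

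There is no real obstacle here: the entire statement is obtained by pasting together \cite[Proposition 8.2.8]{BPO} with the factorization set up in \eqref{eq:diagram-crys}. The substantive work lies upstream—in Theorem \ref{thm:de-rham-witt-rsc-log} (equipping $W_m\Lambda^n$ with log transfers so that $R\Gamma_{\mathrm{crys}}$ is defined on $\logDMlet(k,W(k))$), in Theorem \ref{thm:witt-local} ($(\P^\bullet,\P^{\bullet-1})$-invariance, hence $\div$-invariance), and in \cite{BPO} itself for the identification $\omega^*M^V(X)\simeq M(\ol{X},\partial X)$.
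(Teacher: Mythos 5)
Your proposal is correct and follows essentially the same route as the paper: fix a smooth Cartier compactification $(\ol{X},\partial X)$ using RS1 and RS2, invoke the equivalence $\omega^*M^V(X)\simeq M(\ol{X},\partial X)$ of \cite[Proposition 8.2.8]{BPO}, and unwind the definition of $R\Gamma_p$ from \eqref{eq:diagram-crys}. The paper's proof is just a terser version of the same argument, with the upstream inputs (Theorem \ref{thm:de-rham-witt-rsc-log}, Theorem \ref{thm:witt-local}, Remark \ref{rmk:log-crys}) left implicit.
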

	\begin{remark} 
		As observed in \cite[Proposition 2.24]{ertl2021integral} there exist $X\in \Sm(k)$ with smooth Cartier compactification $(\ol{X},\partial X)$ and an étale hypercover $X_\bullet\to X$ with smooth Cartier compactification $(\ol{X}_\bullet,\partial X_{\bullet})$ such that\[
		H^i_{\rm crys}((X,\partial X)/W(k))\not \simeq H^i_{\rm crys} ((X_{\bullet},\partial X_{\bullet})/W(k)).
		\]
		This by Proposition \ref{prop:comparison-ESS} implies that $M_{\rm \loget}(\overline{X},\partial X) \not \simeq M_{\rm \loget}(\overline{X}_{\bullet},\partial X_{\bullet})$. In particular, this implies that the functor  \eqref{eq:diagram-crys}
		does {\bf not} factor through $\mathcal{DM}^{\rm eff}_{\et}(k,\Z)$, so the following diagram is {\bf not} commutative:\[
		\begin{tikzcd}
			\mathcal{DM}^{\rm eff}(k,\Z)\ar[r,"\omega^*"]\ar[dr,phantom,"{\not \circlearrowleft}"]\ar[d,"L_{\et}"]&\logDM(k,\Z)\ar[d,"L_{\loget}"]\\
			\mathcal{DM}^{\rm eff}_{\et}(k,\Z)\ar[r,"\omega^*_{\et}"]&\mathbf{log}\mathcal{DM}^{\rm eff}_{\loget}(k,\Z)
		\end{tikzcd}
		\]
		as $L_{\et}(M^V(X))\simeq L_{\et}M^V(X_\bullet)$. On the other hand, it commutes with rational coefficients.
		In fact, in this case, let $K=W(k)[1/p]$ and let $X\in \SmlSm(k)$ with $\ul{X}$ proper. By \cite[Cor.11.7 1)]{nakkajima} have that \[
		R_{\rm crys}(X)[1/p]\simeq R\Gamma_{\rm rig}(X^{\circ}/K),
		\] 
		and rigid cohomology factors through $\mathcal{DM}_{\rm \et}^{\rm eff}(k,\Q)$, there is in fact a commutative diagram\[
		\begin{tikzcd}
			\mathcal{DM}^{\mathrm{eff}}(k,\Q)\ar[r,"L_{\loget}\omega^*"]\ar[d,"\simeq"]&\mathbf{log}\mathcal{DM}^{\eff}_{\loget}(k,\Q)\ar[rrr,"{R\Gamma_{\mathrm{crys}}(-)[1/p]}"]&&&\cD(K). \\
			\mathcal{DM}^{\mathrm{eff}}_{\et}(k,\Q)\ar[urrrr,bend right = 20,"{R\Gamma_{\rm rig}(-/K)}"]
		\end{tikzcd}
		\]
	\end{remark}
	\subsection{}\label{ssec:tame}
	Finally, we show that $R\Gamma_p$ satisfies condition \ref{(iii)''} of the introduction. Notice that since $R\Gamma_p$ factors through $\logDMlet(k,\Z)$, $R\Gamma_p$ satisfies the following descent property:
	\begin{enumerate}
		\item[$\log$-(iii)] For all $(\ol{X},\partial X)\in \SmlSm(k)$ with $\ol{X}$ proper, for all $(\ol{U},\partial U)\to (\ol{X},\partial X)$ finite log-\'etale cover (in particular, strict), let $U\to X$ be the finite \'etale map on the open subschemes where the log structures is trivial. Then the \v Cech hypercover $U_\bullet \to X$ induces an equivalence\[
		R\Gamma_p(X)\to R\Gamma_p(U_{\bullet})
		\]
	\end{enumerate}
	The link between finite log \'etale and tame descent is summarized in the following result:
	\begin{lemma}\label{lem:tame-vs-log}
		If $X\in \Sm(k)$ has a smooth Cartier compactification $(\ol{X},\partial X)$, then for any $\phi\colon U\to X$ finite tame Galois cover of $k$-schemes, there exists a smooth Cariter compactifications $(\ol{U},\partial U)$ and a log \'etale cover $(\ol{U},\partial U)\to (\ol{X},\partial X)$ whose restriction $U\to X$ is $\phi$.
	\end{lemma}
	\begin{proof}
		Let $(\ol{X},\partial X)$ be a smooth Cartier compactification of $X$. By a combination of \cite[Theorem 1.1]{KerzSchmidt} and \cite[Theorem 7.6]{Illusielog}\footnote{This is stated without proof: for a sketch one can check \cite[3.3]{Stix}}, if $U\to X$ is a tame Galois cover there is a unique $(\ol{U}',\partial U')\to (\ol{X},\partial X)$ that is a Kummer \'etale Galois cover such that $U=\ol{U}'\times_{\ol{X}} X$. In particular, $(\ol{U}',\partial U')\in \lSm(k)$, so by \cite[Theorem 5.10]{NiziollogBU} (see also \cite[Proposition A.10.2]{BPO}) there is a log blow-up $(\ol{U},\partial U)\to (\ol{U}',\partial U')$ such that $(\ol{U},\partial U)\in \SmlSm(k)$. Since log blow-ups are dividing covers by \cite[A.11.1]{BPO}, $(\ol{U},\partial U)\to (\ol{X},\partial X)$ is a log \'etale cover.
	\end{proof}
	
	We remark that the counterexample to \ref{(iii)} given in \cite{ertl2021integral} is not a tame cover.
	
	We finish this section by deducing the following results for the cohomology $R\Gamma_{p}(-)$:
	\begin{thm}[Projective bundle formula]\label{thm:pbf}
		For $\cE\to X$ a vector bundle of rank $n+1$, we have\[
		R\Gamma_p(\P(\cE))\simeq \bigoplus_{i=0}^n R\Gamma_{p}(X)[-i]
		\]
		\begin{proof}
			From the projective bundle formula, we have\[
			M^V(\P(\cE))\simeq \bigoplus_{i=0}^n M^V(X)(i)[2i]
			\]
			hence by \cite[Proposition 8.2.8]{BPO}\[
			\omega^*(M^V(X)(i)[2i])\simeq M(X)\otimes ((M(\P^1,\triv)i_0))^{\otimes i},
			\]
			where $(M(\P^1,\triv),i_{0})$ is as in \eqref{eq:deg-zero} the orthogonal to $0\to \P^1$. By Corollary \ref{cor:dRW-P1} we have
			\[
			\ul{\Map}((M(\P^1,\triv),i_0),W_m\Lambda^\bullet) \simeq W_m\Lambda^{\bullet-1}[-1]
			\]
			which recursively gives:
			\[
			\ul{\Map}((M(\P^1,\triv),i_0)^{\otimes i},W_m\Lambda^\bullet) \simeq W_m\Lambda^{\bullet-i}[-i].
			\]
			The equivalence above is compatible with $W_{m+1}\Lambda^\bullet\to W_m\Lambda^\bullet$, so by taking the limit over $m$ we conclude the proof.
		\end{proof}
	\end{thm}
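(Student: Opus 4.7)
The plan is to transport the classical Voevodsky projective bundle formula across $\omega^{*}$ and then reduce, level by level, to the Witt differentials computation furnished by Corollary \ref{cor:dRW-P1}. Under the blanket assumption that $k$ satisfies RS1 and RS2, Proposition 8.2.8 of \cite{BPO} identifies $\omega^{*}$ on Voevodsky motives with log motives of smooth Cartier compactifications, and in particular $\omega^{*}$ commutes with the monoidal structure coming from products of smooth compactifications. So I would start from the standard decomposition $M^{V}(\P(\cE)) \simeq \bigoplus_{i} M^{V}(X)(i)[2i]$ in $\mathcal{DM}^{\rm eff}(k,\Z)$ and apply $\omega^{*}$ to rewrite the right hand side as $\bigoplus_{i} M(X,\triv) \otimes (M(\P^{1},\triv)^{0})^{\otimes i}$ in $\logDM(k,\Z)$, where $(\P^{1},\triv)^{0}$ denotes the reduced motive orthogonal to the section $0 \to \P^{1}$ in the sense of \eqref{eq:deg-zero}.

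Next I would evaluate this decomposition against $W_{m}\Lambda^{\bullet}$. The key input is Corollary \ref{cor:dRW-P1}, which unwinds to the equivalence $\ul{\Map}(M(\P^{1},\triv)^{0}, W_{m}\Lambda^{n}) \simeq W_{m}\Lambda^{n-1}[-1]$. Iterating $i$ times (this is clean because the reduced $\P^{1}$-motive splits off a direct summand and $\ul{\Map}$ converts tensor powers into iterated internal mapping objects) yields $\ul{\Map}((M(\P^{1},\triv)^{0})^{\otimes i}, W_{m}\Lambda^{\bullet}) \simeq W_{m}\Lambda^{\bullet-i}[-i]$. Assembling over $i$ then gives the formula for each $m$.

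Finally, I would pass to the limit over $m$. The isomorphisms at each stage come from Corollary \ref{cor:dRW-P1}, which in turn is built from the residue/Gysin comparison in Theorem \ref{thm:Gysin}, and these are natural in the restriction maps $W_{m+1}\Lambda^{\bullet} \to W_{m}\Lambda^{\bullet}$. Since $R\Gamma_{p}$ is defined by first applying $\omega^{*}$ and then taking $\holim_{m}$ of $\Map(\_, W_{m}\Lambda^{\bullet})$ through $R\Gamma_{\rm crys}$ as in \eqref{eq:diagram-crys}, compatibility of the iterated computation with these transition maps lets me commute $\holim_{m}$ with the finite direct sum and land on the claimed decomposition of $R\Gamma_{p}(\P(\cE))$.

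The only genuinely delicate step is the monoidality input: without RS1 and RS2 one does not know that $\omega^{*}$ is symmetric monoidal on the relevant Voevodsky motives, and then the clean rewriting $\omega^{*}(M^{V}(X)(i)[2i]) \simeq M(X,\triv) \otimes (M(\P^{1},\triv)^{0})^{\otimes i}$ fails; everything else in the argument is formal once Corollary \ref{cor:dRW-P1} is available. Compatibility of this computation with the Gros projective bundle isomorphism in the proper case (referenced in the introduction) is automatic because both are induced from the cup product with the Chern class, cf.\ \eqref{eq:PBF-witt}.
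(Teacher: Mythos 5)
Your proposal is correct and follows essentially the same route as the paper's proof: decompose $M^V(\P(\cE))$ via the classical projective bundle formula, transport through $\omega^*$ using \cite[Proposition 8.2.8]{BPO} under RS1 and RS2, evaluate against $W_m\Lambda^\bullet$ via Corollary \ref{cor:dRW-P1} iterated $i$ times, and pass to the limit over $m$ using compatibility with the restriction maps. Your added remarks on the monoidality of $\omega^*$ being the genuinely RS-dependent input and on the compatibility with the Gros formula match what the paper states in the introduction and around \eqref{eq:PBF-witt}.
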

	
	\begin{thm}[Purity]\label{thm:purity}
		Let $X\in \Sm(k)$ and let $Z\subseteq X$ be a smooth closed subscheme of codimension $d$. Then we have a fiber sequence:\[
    		R\Gamma_p(Z)[-d]\to R\Gamma_p(X)\to R\Gamma_p(X-Z)
		\]
		\begin{proof}
			By \cite{MV} and Proposition \ref{prop:comparison-ESS} we have a fiber sequence\[
			R\Gamma_p(X)\to R\Gamma_p(X-Z)\to R\Gamma_p(\omega^*(M(Z)(d)[2d]).
			\]
			As before, by \cite[Proposition 8.2.8]{BPO} we have
			\[\Map(\omega^*(M^{V}(Z)(d)[2d]),W_m\Lambda^{\bullet})\simeq \Map(M(Z),W_m\Lambda^{\bullet-d}[-d])\simeq R\Gamma(Z,W_m\Lambda^\bullet)[-d].\]
			which by taking the homotopy limit over $m$ gives the result.
	\end{proof}\end{thm}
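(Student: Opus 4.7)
The plan is to transport Morel--Voevodsky's Gysin triangle in $\DMinf(k,\Z)$ along the chain of functors defining $R\Gamma_p$ in \eqref{eq:RGammap}. Concretely, I would begin from the classical cofiber sequence
\[
M^V(X-Z)\to M^V(X)\to M^V(Z)(d)[2d]
\]
in $\DMinf(k,\Z)$. Both $\omega^*$ and the log-\'etale localization $L_{\loget}$ are exact, so applying them yields a cofiber sequence in $\logDMlet(k,\Z)$.

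Next, I would identify the third term. Under the assumption \ref{nota:RS}, Proposition \ref{prop:comparison-ESS} (via \cite[Proposition 8.2.8]{BPO}) identifies $\omega^*M^V(Y)$ with $M(\ol{Y},\partial Y)$ for any smooth Cartier compactification; combining this with the monoidality of $\omega^*$ and with $\omega^*\Z(1)[2]\simeq M(\P^1,\triv)^{0}$ (in the notation of \eqref{eq:deg-zero}) gives
\[
\omega^*(M^V(Z)(d)[2d])\simeq M(\ol{Z},\partial Z)\otimes (M(\P^1,\triv)^{0})^{\otimes d},
\]
exactly as already exploited in the proof of Theorem \ref{thm:pbf}.

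Finally, I would apply the contravariant triangulated functor $R\Gamma_{\rm crys}=\lim_m\Map(-,W_m\Lambda^{\bullet})$, obtaining the fiber sequence
\[
R\Gamma_p(X)\to R\Gamma_p(X-Z)\to R\Gamma_{\rm crys}(\omega^*(M^V(Z)(d)[2d])).
\]
For the last term I would iterate Corollary \ref{cor:dRW-P1}, exactly as in the proof of Theorem \ref{thm:pbf}, to obtain
\[
\ul{\Map}((M(\P^1,\triv)^{0})^{\otimes d},W_m\Lambda^{\bullet})\simeq W_m\Lambda^{\bullet-d}[-d],
\]
compatibly with the restriction maps $W_{m+1}\Lambda\to W_m\Lambda$. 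Taking $\lim_m$ then identifies the last term with $R\Gamma_p(Z)[-d]$.

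The principal technical obstacle is the middle step: the identification of $\omega^*$ on Tate twists relies on \cite[Proposition 8.2.8]{BPO}, which in turn depends on resolution of singularities via \ref{nota:RS}. Without RS1--RS2 one would only have $\omega^*M^V(Z)$ as an abstract object of $\logDM$, and the explicit decomposition into $M(\ol{Z},\partial Z)\otimes (M(\P^1,\triv)^{0})^{\otimes d}$ would be unavailable. The remaining ingredients---exactness of $\omega^*$ and $L_{\loget}$, the triangulated structure of $R\Gamma_{\rm crys}$, and the iterative computation via Corollary \ref{cor:dRW-P1}---are formal and entirely parallel to Theorem \ref{thm:pbf}.
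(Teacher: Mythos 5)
Your proposal is correct and follows essentially the same route as the paper: both transport the Morel--Voevodsky Gysin triangle through $\omega^*$ and $L_{\loget}$, identify $\omega^*(M^V(Z)(d)[2d])$ via \cite[Proposition 8.2.8]{BPO} (hence under \ref{nota:RS}), and compute the mapping object into $W_m\Lambda^{\bullet}$ by iterating Corollary \ref{cor:dRW-P1} before passing to the limit over $m$. You merely spell out the intermediate tensor-power decomposition that the paper compresses into the phrase ``as before'' referring to Theorem \ref{thm:pbf}.
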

	\begin{thm}[K\"unneth formula]\label{thm:kunneth}
		For $X,Y\in \Sm(k)$, There is an equivalence in $\cD(R(k))$:
		\[
		R\Gamma_p(X)\widehat{\star^{L}} R\Gamma_p(Y)\simeq R\Gamma_p(X\times Y)
		\]
		\begin{proof}
			It follows from Proposition \ref{prop:kunneth} and \cite[Proposition 8.2.8]{BPO}
		\end{proof}   
	\end{thm}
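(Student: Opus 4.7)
The plan is to reduce the statement to the K\"unneth formula for log de Rham--Witt cohomology (Proposition \ref{prop:kunneth}) by passing through smooth Cartier compactifications of $X$ and $Y$; this is where the assumption of resolutions of singularities enters. First I would choose smooth Cartier compactifications $\ol{X}$ and $\ol{Y}$ with boundaries $\partial X$ and $\partial Y$ consisting of simple normal crossing divisors, so that $(\ol{X}, \partial X), (\ol{Y}, \partial Y) \in \SmlSm(k)$. A direct check shows that $\ol{X} \times \ol{Y}$, equipped with the log structure supported on the strict normal crossing divisor $(\partial X \times \ol{Y}) \cup (\ol{X} \times \partial Y)$, is a smooth Cartier compactification of $X \times Y$ and realizes the categorical product $(\ol{X}, \partial X) \times (\ol{Y}, \partial Y)$ in $\SmlSm(k)$.

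Next I would invoke \cite[Theorem 8.2.8]{BPO}, as already used in Proposition \ref{prop:comparison-ESS}, to identify $\omega^* M^V(Z)$ with $M(\ol{Z}, \partial Z)$ in $\logDM(k,\Z)$ for each $Z \in \{X, Y, X \times Y\}$. Since $M \colon \SmlSm(k) \to \logDM(k, \Z)$ is symmetric monoidal, this identification says precisely that $\omega^*$ commutes with tensor products on this class of motives, i.e.\ that $\omega^* M^V(X \times Y) \simeq \omega^*M^V(X) \otimes \omega^*M^V(Y)$. Applying $R\Gamma_{\rm crys}$ and using Proposition \ref{prop:comparison-ESS} then yields canonical equivalences $R\Gamma_p(Z) \simeq R\Gamma((\ol{Z}, \partial Z), W\Lambda)$ in $\cD(R(k))$ for $Z \in \{X, Y, X \times Y\}$.

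Finally, applying Proposition \ref{prop:kunneth} to the pair $(\ol{X}, \partial X)$ and $(\ol{Y}, \partial Y)$ yields the equivalence $R\Gamma((\ol{X}, \partial X), W\Lambda) \widehat{\star^{L}} R\Gamma((\ol{Y}, \partial Y), W\Lambda) \simeq R\Gamma((\ol{X} \times \ol{Y}, \partial), W\Lambda)$ in $\cD(R(k))$, which under the identifications of the previous step becomes the desired K\"unneth equivalence for $R\Gamma_p$. I expect the main obstacle to be precisely the monoidality of $\omega^*$: this is exactly where (RS1)--(RS2) are indispensable, since without \cite[Theorem 8.2.8]{BPO} one has no direct way to identify $\omega^* M^V(X \times Y)$ with the motive of a specific log scheme in $\SmlSm(k)$, and Proposition \ref{prop:kunneth} only supplies a K\"unneth statement for objects of $\SmlSm(k)$. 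Once this monoidality is in place the remaining steps are essentially bookkeeping.
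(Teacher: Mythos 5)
Your proposal is correct and follows exactly the route the paper intends: the paper's proof is a one-line citation of Proposition \ref{prop:kunneth} together with \cite[Theorem 8.2.8]{BPO}, and your argument simply fills in the same steps — compactify $X$, $Y$, and $X\times Y$ compatibly, use \cite[Theorem 8.2.8]{BPO} (hence RS1–RS2) to identify $R\Gamma_p(Z)\simeq R\Gamma((\ol{Z},\partial Z),W\Lambda)$, and apply the log de Rham–Witt K\"unneth formula. Nothing is missing.
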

	
	\bibliographystyle{alpha}
	\bibliography{bibMerici}
\end{document}